\documentclass[12pt]{article}
 \usepackage{amsmath,amssymb,amsthm}
 \RequirePackage[dvips]{graphicx}
 \usepackage{graphics,color}
 \def\draw #1 by #2 (#3){
  \vbox to #2{
    \hrule width #1 height 0pt depth 0pt
    \vfill
    \special{picture #3} 
    }
  }

 \def\scaleddraw #1 by #2 (#3 scaled #4){{
  \dimen0=#1 \dimen1=#2
  \divide\dimen0 by 1000 \multiply\dimen0 by #4
  \divide\dimen1 by 1000 \multiply\dimen1 by #4
  \draw \dimen0 by \dimen1 (#3 scaled #4)}
  }

\usepackage{hyperref}

\newtheorem{theorem}{Theorem}[section]
\newtheorem{example}{Example}
\newtheorem{problem}[example]{Problem}
\newtheorem{defin}[theorem]{Definition}
\newtheorem{lemma}[theorem]{Lemma}
\newtheorem{conjecture}[theorem]{Conjecture}
\newtheorem{corollary}[theorem]{Corollary}
\newtheorem{remark}[theorem]{Remark}
\newtheorem{nt}{Note}
\newtheorem{claim}{Claim}

\newenvironment{pf}{\medskip\noindent{Proof:  \hspace*{-.4cm}}
       \enspace}{\hfill \qed \newline \medskip}

 \newcommand{\singlespacing}{\let\CS=\@currsize\renewcommand{\baselinestretch}{1}\tiny\CS}
 \newcommand{\oneandahalfspacing}{\let\CS=\@currsize\renewcommand{\baselinestretch}{1.25}\tiny\CS}
 \newcommand{\doublespacing}{\let\CS=\@currsize\renewcommand{\baselinestretch}{1.35}\tiny\CS}

 \newtheorem{rule-def}[theorem]{Rule}

\begin{document}
\baselineskip 16pt
 \newcommand{\la}{\lambda}
 \newcommand{\si}{\sigma}
 \newcommand{\ol}{1-\lambda}
 \newcommand{\be}{\begin{equation}}
 \newcommand{\ee}{\end{equation}}
 \newcommand{\bea}{\begin{eqnarray}}
 \newcommand{\eea}{\end{eqnarray}}

\begin{center}
 {\Large \bf Sum of the $k$ Largest Eigenvalues of Symmetric \\[1mm] Matrices: Theory and Applications}\\

  \vspace{10mm}

 {\large \bf Shaowei Sun$^{a,\,b}$, Yaping Min$^{a}$, Kinkar Chandra Das$^{c,}\footnote{Corresponding author}$}

 \vspace{9mm}

 \baselineskip=0.20in
$^a${\it School of Science, Zhejiang University of Science and Technology,\\
 Hangzhou, Zhejiang, 310023, PR China \/}\\
{\rm E-mail:} {\tt sunshaowei2009@126.com,\,1915068342@qq.com}\\[2mm]
$^b${\it School of Mathematics, East China University of Science and Technology,\\
 Shanghai, 200237, PR China \/}\\[2mm]
$^c${\it Department of Mathematics, Sungkyunkwan University,\\
Suwon 16419, Republic of Korea\/}\\[2mm]
{\rm E-mail:} {\tt kinkardas2003@gmail.com}

 \vspace{4mm}

 \end{center}

 \vspace{5mm}

 \baselineskip=0.23in

 \begin{abstract}
This paper establishes new upper bounds for the sum of the $k$ largest eigenvalues of symmetric matrices. When applied to the adjacency matrix of a graph, our results improve upon a related bound due to Mohar {\bf [On the sum of k largest eigenvalues of graphs and symmetric matrices, J. Combin. Theory Ser. B 99 (2009) 306--313]}. Furthermore, in the case of the Laplacian matrix, we prove that the well-known Brouwer's conjecture {\bf [Spectra of Graphs, Springer, New York, 2012]} holds for small values of $k$ for almost all graphs, thereby taking a significant step toward its complete resolution.

 \bigskip

 \noindent
 {\bf Key Words:} Eigenvalue sum, Symmetric matrix, graph, Adjacency matrix, Laplacian matrix\\
 \\
 {\bf 2000 Mathematics Subject Classification:} 05C50
 \end{abstract}

 \baselineskip=0.30in

 \section{Introduction}
 The spectral theory of symmetric matrices underpins vast areas of
 mathematics and its applications in the physical sciences, engineering and data
 science. The eigenvalues of a symmetric matrix  encode critical information about the fundamental properties of the system it represents.
 While computing the exact eigenvalues is a central numerical task, deriving rigorous a priori bounds for these eigenvalues constitutes
 a parallel and equally profound strand of research in matrix
 theory. In this paper, we will study the sum of the $k$ largest
 eigenvalues of symmetric matrices.

 Let $M$ be an $n\times n$ symmetric matrix, whose eigenvalues are all real and ordered as $\lambda_1(M) \geq \lambda_2(M) \geq \cdots \geq \lambda_n(M)$.
  Denote by $\nu(M)$ (respectively, $\theta(M)$) the number of eigenvalues of $M$ greater than (respectively, less than)
  $\frac{\operatorname{tr}(M)}{n}$, where $\operatorname{tr}(M)$ is the trace of $M$. When $\operatorname{tr}(M) = 0$, the quantities $\nu(M)$ and $\theta(M)$ are referred to as the positive and negative inertia of $M$, respectively.
  For a graph $G$, we write $\lambda_i(G) = \lambda_i(A(G))$ and $\mu_i(G) = \lambda_i(L(G))$, where $A(G)$ and $L(G)$ denote the adjacency matrix and Laplacian matrix of $G$, respectively. We use $S_M(G)$ to denote the spectrum of the matrix $M$ of $G$.

 In \cite{MOHAR}, Mohar obtained an upper bound on the sum of $k$ largest
 eigenvalues of a symmetric matrix via a simple approach. Nikiforov
 \cite{NIK2} strengthened Mohar's result by incorporating the singular values of
 the matrix. Inspired by these works, we further investigate upper bounds on the sum of the $k$ largest eigenvalues of symmetric matrices.

 When we consider a symmetric matrix as a graph matrix, numerous related results have been established regarding the sum of
 its $k$ largest eigenvalues--for instance, for the adjacency matrix \cite{DMS,MOHAR,NIK2}, (signless) Laplacian matrix \cite{AOT,LG}, distance matrix \cite{ZL}, and distance signless Laplacian matrix \cite{DP1}.
 In the case of the Laplacian matrix, a well-known open problem is Brouwer's conjecture, stated as follows:
  \begin{conjecture}  \label{conj1} {\rm (Brouwer's conjecture) \cite{E}}
 Let $G$ be a graph with order $n$ and size $m$. For each $k\in\{1,2,\dots,n-1\}$,
  $$\sum_{i=1}^{k}\mu_i\leq m+\binom{k+1}{2}.$$
  \end{conjecture}
 This conjecture has been verified in several special cases but remains open in general. For recent progress, we refer to \cite{CZ}. A further motivation of this paper is to apply our main
 results to the Laplacian matrix, with the aim of contributing to the study of Brouwer's conjecture.

 The rest of this paper is organized as follows. In Section 2, we
 present our main results concerning the sum of the $k$ largest
 eigenvalues of symmetric matrices. Section 3 is devoted to
 applications of these results to various graph matrices. Finally, we
 conclude the paper with a summary in Section 4.

 \section{Sum of the $k$ largest eigenvalues of a matrix}

In this section, we focus on deriving upper bounds for the sum of the $k$ largest eigenvalues of a symmetric matrix. We now present the first result.

 \begin{theorem} \label{th1} Let $M$ be an $n\times n$ symmetric matrix. Let $s$ be an integer such that $\lambda_s(M)>\frac{tr(M)}{n}$. Then for any $k\geq s$, we have
 \begin{equation}
 \sum_{i=1}^{k}\lambda_i(M)\leq\frac{k}{n}\,tr(M)+\sqrt{tr\left(\Big(M-\frac{tr(M)}{n}I\Big)^2\right)\left(k-\frac{s^2}{\theta+s}\right)}. \label{e1}
 \end{equation}
 The equality holds if and only if  $k=s=\nu$ with 
 $$\lambda_1(M)=\lambda_2(M)=\cdots=\lambda_s(M)>\frac{tr(M)}{n}\geq \lambda_{s+1}(M)~\mbox{ and }~\lambda_{n-\theta+1}(M)=\lambda_{n-\theta+2}(M)=\cdots=\lambda_n(M).$$
 \end{theorem}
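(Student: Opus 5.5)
Throughout, write $\theta=\theta(M)$, $\nu=\nu(M)$ and $c=\frac{tr(M)}{n}$, and pass to centred eigenvalues $x_i=\lambda_i(M)-c$. Then $\sum_{i=1}^n x_i=0$ and $\sum_{i=1}^n x_i^2=T:=tr\big((M-cI)^2\big)$. The claim becomes
$$S_k:=\sum_{i=1}^k x_i\le\sqrt{T\Big(k-\tfrac{s^2}{\theta+s}\Big)}.$$
Exactly $\nu$ of the $x_i$ are positive, namely $x_1,\dots,x_\nu$, and exactly $\theta$ are negative, namely the last $\theta$. The hypothesis $x_s>0$ gives $s\le\nu$. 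Note also that $\theta\ge1$, since the $x_i$ sum to $0$ and $x_1>0$.

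\textbf{Reduction to $k\le\nu$.} If $k>\nu$, the entries $x_{\nu+1},\dots,x_k$ are $\le 0$, so $S_k\le S_\nu$. The function $k\mapsto k-\frac{s^2}{\theta+s}$ is strictly increasing, so the bound for $\nu$ implies the bound for $k$, and strictly so. This also shows that equality forces $k\le\nu$. From now on assume $s\le k\le\nu$.

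\textbf{Core estimate.} Let $P=\sum_{i\le\nu}x_i\ge S_k$. The negative entries sum to exactly $-P$, so Cauchy--Schwarz over the $\theta$ negative entries gives their squares summing to at least $P^2/\theta\ge S_k^2/\theta$. Cauchy--Schwarz over the first $k$ entries gives $\sum_{i\le k}x_i^2\ge S_k^2/k$. These index sets are disjoint, since the first $k$ entries are positive, so
$$T\ge S_k^2\Big(\frac1k+\frac1\theta\Big),\qquad\text{i.e.}\qquad S_k\le\sqrt{T\,\frac{k\theta}{k+\theta}}.$$
Now $\frac{k\theta}{k+\theta}=k-\frac{k^2}{k+\theta}$. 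The map $x\mapsto \frac{x^2}{x+\theta}$ is strictly increasing for $x>0$, so $\frac{k\theta}{k+\theta}\le k-\frac{s^2}{s+\theta}$, with equality iff $k=s$. This proves (\ref{e1}). In fact it gives the stronger bound with $s$ replaced by $k$. One could instead split the top $k$ entries into blocks of sizes $s$ and $k-s$ and minimize, but this yields the same bound, so the direct Cauchy--Schwarz route suffices.

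\textbf{Equality case.} This is the step that needs care: every inequality used must be tight simultaneously.
\begin{itemize}
\item The monotonicity step forces $k=s$.
\item The reduction step forces $k\le\nu$.
\item $P=S_k$ forces $x_{k+1}=\dots=x_\nu=0$, and these are positive unless $k=\nu$; so $k=\nu$.
\item All entries strictly between the positive block and the negative block are zero, i.e.\ $\lambda_{s+1}(M)\le c$; this is automatic from the definition of $\nu$.
\item Tightness of the two Cauchy--Schwarz steps gives $x_1=\dots=x_s$ and equal negative entries $\lambda_{n-\theta+1}(M)=\dots=\lambda_n(M)$.
\end{itemize}
Conversely, under these conditions each step is an equality. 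Here one uses that $k=s=\nu$ makes $k-\frac{s^2}{\theta+s}=\frac{s\theta}{s+\theta}$, and one checks $T=s a^2+\theta b^2$ with $sa=\theta b$, where $a>0$ is the common positive value and $-b<0$ the common negative value. This yields the stated characterization.
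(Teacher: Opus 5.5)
Your proof is correct, and on the range $s<k\le\nu$ it takes a different, shorter route than the paper.

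The paper treats $k=s$ exactly as you do. For $s+1\le k\le\nu$, however, it proceeds in three steps:
\begin{itemize}
\item it splits the top $k$ entries into the blocks $\{1,\dots,s\}$ and $\{s+1,\dots,k\}$;
\item it bounds the negative squares below only by $\big(\sum_{i\le s}x_i\big)^2/\theta$;
\item it maximizes the one-variable function $f(x)=x+\sqrt{(k-s)\big(T-\frac{\theta+s}{\theta s}x^2\big)}$, whose maximum is exactly $\sqrt{T\big(k-\frac{s^2}{\theta+s}\big)}$.
\end{itemize}

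You instead observe that $x_k>0$ for $k\le\nu$. So the $k=s$ argument can be run with $k$ itself playing the role of $s$, and monotonicity of $x\mapsto x^2/(x+\theta)$ does the rest. This has three advantages:
\begin{itemize}
\item it avoids the calculus;
\item it makes the equality analysis transparent: strictness for $k>s$ comes from the monotonicity step, and for $k>\nu$ from $T>0$;
\item it proves more, namely $S_k\le\sqrt{T\,\kappa\theta/(\kappa+\theta)}$ with $\kappa=\min(k,\nu)$, which is at most the right-hand side of (\ref{e1}) for every admissible $s$.
\end{itemize}

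For the same reason, your aside that block-splitting ``yields the same bound'' holds only for an optimal split. The paper's split discards $\sum_{s<i\le k}x_i$ in the negative-block estimate, which is precisely why it ends at the weaker (\ref{e1}).

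What the paper's longer route buys is the intermediate inequality (\ref{e6}) and the function $f$ with its known maximizer. Both are reused in Case~1 of the proof of Theorem~\ref{th4}. Your argument does not supply that machinery, though the present statement does not need it.
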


 \begin{proof} Since 
 $$\sum\limits^n_{i=1}\,\lambda_i(M)=tr(M)~\mbox{ and }~\lambda_1(M)\geq \lambda_2(M)\geq\cdots\geq\lambda_n(M).$$
 It follows that $\lambda_1(M)\geq \frac{tr(M)}{n}$. If $\lambda_1(M)=\frac{tr(M)}{n}$, then all eigenvalues must be equal:
 $$\lambda_1(M)=\lambda_2(M)=\cdots=\lambda_n(M)=\frac{tr(M)}{n}.$$ 
 This implies that $s$ does not exist, which leads to a contradiction. Otherwise, $\lambda_1(M)>\frac{tr(M)}{n}$, that is, $M\neq\frac{tr(M)}{n}I$ and $s\geq 1$.
 Let $P=M-\frac{tr(M)}{n}I$.
  It is clear that
 $\lambda_i(P)=\lambda_i(M)-\frac{tr(M)}{n}$ and $tr(P)=0$. Then we set $\nu=\nu(M)=\nu(P)$ and $\theta=\theta(M)=\theta(P)$, which are defined before.
 To prove the result in (\ref{e1}), we will show the following equivalent result:
 \begin{equation}
 \sum_{i=1}^{k}\lambda_i(P)\leq \sqrt{tr(P^2)\left(k-\frac{s^2}{\theta+s}\right)}~~~\mbox{for } k\geq s, \label{e2}
 \end{equation}
 with equality holding if and only if $k=s=\nu$, $\lambda_1(P)=\lambda_2(P)=\cdots=\lambda_s(P)$ and $\lambda_{n-\theta+1}(P)=\lambda_{n-\theta+2}(P)=\cdots=\lambda_n(P)$.
 \noindent

 \vspace*{3mm}

\noindent
By Cauchy-Schwarz inequality, we obtain
\begin{align}
\sum\limits^{s}_{i=1}\lambda_i^2(P)\geq \frac{1}{s}\left(\sum\limits^{s}_{i=1}\lambda_{i}(P)\right)^2\label{1kin1}
\end{align}
with equality if and only if $\lambda_1(P)=\lambda_2(P)=\cdots=\lambda_s(P)$. Since $\nu\geq s$, using the above result, we obtain
\begin{align}
 tr(P^2)=\sum\limits^{\nu}_{i=1}\lambda_i^2(P)+\sum\limits^{\theta}_{i=1}\lambda_{n-i+1}^2(P)&\geq\sum\limits^{s}_{i=1}\lambda^2_i(P)+\sum\limits^{\theta}_{i=1}\lambda_{n-i+1}^2(P) \nonumber\\[3mm]
  &\geq \frac{1}{s}\left(\sum\limits^{s}_{i=1}\lambda_{i}(P)\right)^2+\sum\limits^{\theta}_{i=1}\lambda_{n-i+1}^2(P)\label{e3}
\end{align}
with equality if and only if $\nu=s$ and $\lambda_1(P)=\lambda_2(P)=\cdots=\lambda_s(P)$.

\vspace*{3mm}

 \noindent
 By the Cauchy-Schwarz inequality with the fact that $tr(P)=0$, we have
 \begin{equation}
 \sum\limits^{\theta}_{i=1}\lambda_{n-i+1}^2(P)\geq \frac{\left(\sum\limits^{\theta}_{i=1}\,|\lambda_{n-i+1}(P)|\right)^2}{\theta}=\frac{\left(\sum\limits^{\nu}_{i=1}\lambda_{i}(P)\right)^2}{\theta}\geq \frac{\left(\sum\limits^{s}_{i=1}\lambda_{i}(P)\right)^2}{\theta}\label{e4}
 \end{equation}
 with equality if and only if $\nu=s$ and $|\lambda_{n-\theta+1}(P)|=|\lambda_{n-\theta+2}(P)|=\cdots=|\lambda_{n}(P)|$.

 \vspace*{3mm}

 \noindent
Combining the results in (\ref{e3}) and (\ref{e4}), we get
\begin{eqnarray}
\sum\limits^{s}_{i=1}\lambda_{i}(P)\leq \sqrt{\frac{\theta\,s}{\theta+s}\,tr(P^2)}. \nonumber
\end{eqnarray}

If $k=s$, then from the above, the result in (\ref{e2}) follows. Moreover, the above equality holds if and only if $\nu=s$ with $\lambda_1(P)=\lambda_1(P)=\cdots=\lambda_s(P)$ and $\lambda_{n-\theta+1}(P)=\lambda_{n-\theta+2}(P)=\cdots=\lambda_n(P)$. 

\vspace*{3mm}

\noindent
Otherwise, $k\geq s+1$. First we can assume that $s+1\leq k\leq \nu$. Next we will prove the inequality in (\ref{e2}) holds strictly for $s+1\leq k\leq
 \nu$. Since $\nu>s$, by Cauchy-Schwarz inequality with (\ref{1kin1}) and (\ref{e4}), we obtain
 \begin{eqnarray}
 \sum_{i=1}^{k}\lambda_i(P)&\leq&\sum\limits^{s}_{i=1}\lambda_{i}(P)+\sqrt{(k-s)\sum\limits^k_{i=s+1}\lambda_i^2(P)}\nonumber\\[3mm]
 &\leq&\sum\limits^{s}_{i=1}\lambda_{i}(P)+\sqrt{\left(k-s\right)\left(tr(P^2)-\sum\limits^{\theta}_{i=1}\lambda_{n-i+1}^2(P)-\sum\limits^{s}_{i=1}\lambda^2_{i}(P)\right)}\nonumber\\[3mm]
 &<&\sum\limits^{s}_{i=1}\lambda_{i}(P)+\sqrt{\left(k-s\right)\left(tr(P^2)-\frac{1}{\theta}\left(\sum\limits^{s}_{i=1}\lambda_{i}(P)\right)^2-\frac{1}{s}\left(\sum\limits^{s}_{i=1}\lambda_{i}(P)\right)^2\right)} \nonumber\\[3mm]
  &=&\sum\limits^{s}_{i=1}\lambda_{i}(P)+\sqrt{\left(k-s\right)\left(tr(P^2)-\frac{\theta+s}{\theta\,s}\left(\sum\limits^{s}_{i=1}\lambda_{i}(P)\right)^2\right)}. \label{e6}
 \end{eqnarray}

 \noindent
 Let us consider a function
  \begin{equation}
 f(x)= x+\sqrt{(k-s)\left(tr(P^2)-\frac{\theta+s}{\theta\,s}x^2\right)},\,~~0\leq x\leq \sqrt{\frac{\theta\,s}{\theta+s}\,tr(P^2)}.\nonumber
 \end{equation}
 Then $f'(x)=0$ gives
 $$x_1= \theta\,s\sqrt{\frac{tr(P^2)}{(\theta+s)(k\,\theta+k\,s-s^2)}}<\sqrt{\frac{\theta\,s}{\theta+s}\,tr(P^2)},$$
 which implies that $f(x)$ is increasing for $x\leq x_1$, and decreasing for $x\geq x_1$.
 Thus, we have
 \begin{eqnarray}
 f(x)\leq f(x_1)=\sqrt{tr(P^2)\left(k-\frac{s^2}{\theta+s}\right)}.\nonumber
 \end{eqnarray}
Combining the above result with (\ref{e6}), we obtain
\begin{equation}
 \sum_{i=1}^{k}\lambda_i(P)<\sqrt{tr(P^2)\left(k-\frac{s^2}{\theta+s}\right)}\label{1s1}
\end{equation}
for $s+1\leq k\leq \nu$. The result in (\ref{e2}) holds strictly. 

 \vspace*{3mm}

Next we assume that $k\geq \nu+1$. Since $\lambda_i(P)\leq 0$ for $i\geq \nu+1$, we obtain
$$\sum_{i=1}^{k}\lambda_i(P)\leq \sum_{i=1}^{\nu}\lambda_i(P)<\sqrt{tr(P^2)\left(\nu-\frac{s^2}{\theta+s}\right)}<\sqrt{tr(P^2)\left(k-\frac{s^2}{\theta+s}\right)}$$
because (\ref{1s1}) holds for $k=\nu$. Consequently, the inequality in (\ref{e2}) holds strictly. This completes the proof of the theorem.
\end{proof}

 Note that the condition of $s$ in the above theorem is $s\leq \nu$. By the above theorem, we obtain a stronger result in which  the upper bound on $s$ is replaced by a larger number  as follows:
 \begin{theorem} \label{th3} Let $M$ be an $n\times n$ symmetric matrix. Let $t$ be an integer such that $t\leq n-\theta$. Then for any $k\geq t$, we have
 \begin{equation*}
 \sum_{i=1}^{k}\lambda_i(M)\leq\frac{k}{n}\,tr(M)+\sqrt{tr\left(\Big(M-\frac{tr(M)}{n}I\Big)^2\right)\left(k-\frac{t^2}{\theta+t}\right)}.
 \end{equation*}
 The equality holds if and only if $M=\frac{tr(M)}{n}I$ or $k=t$ with 
 $$\lambda_1(M)=\lambda_1(M)=\cdots=\lambda_t(M)> \frac{tr(M)}{n}\geq\lambda_{t+1}(M)~\mbox{ and }~\lambda_{n-\theta+1}(M)=\lambda_{n-\theta+2}(M)=\cdots=\lambda_n(M).$$
 \end{theorem}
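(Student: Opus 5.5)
We reduce to Theorem \ref{th1} by passing to $P=M-\frac{tr(M)}{n}I$, which has $tr(P)=0$ and the same $\nu,\theta$. We must show
$$\sum_{i=1}^k\lambda_i(P)\le\sqrt{tr(P^2)\Big(k-\frac{t^2}{\theta+t}\Big)}\qquad (k\ge t,\ t\le n-\theta).$$
If $M=\frac{tr(M)}{n}I$, both sides vanish and equality holds. Otherwise $\nu\ge1$ and $\theta\ge1$. Set $z=n-\theta-\nu\ge0$, the number of zero eigenvalues of $P$. The key point is that $t$ may exceed $\nu$, since the eigenvalues $\lambda_{\nu+1}(P),\dots,\lambda_{n-\theta}(P)$ are all zero. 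When $t\le\nu$, the claim is exactly Theorem \ref{th1} with $s=t$. So the new case is $\nu<t\le n-\theta$.

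For that case we use that $g(x)=x-\frac{x^2}{\theta+x}=\frac{\theta x}{\theta+x}$ is increasing in $x$. Hence replacing $t$ by $\nu$ only enlarges the bound:
$$k-\frac{t^2}{\theta+t}\;\ge\;k-\frac{\nu^2}{\theta+\nu}\quad\text{requires care,}$$
because $\frac{x^2}{\theta+x}$ is itself increasing in $x$. So the bound with $t$ is actually \emph{smaller} than the bound with $\nu$, and a monotonicity reduction in that direction fails. Instead I will redo the estimate directly.

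For $t\le k\le n-\theta$, the zero eigenvalues give $\sum_{i=1}^k\lambda_i(P)=\sum_{i=1}^{\nu}\lambda_i(P)=:S$. Cauchy--Schwarz on the positive and negative parts, with $tr(P)=0$, gives
$$tr(P^2)\ge \frac{S^2}{\nu}+\frac{S^2}{\theta},\qquad\text{so}\qquad S^2\le\frac{\theta\nu}{\theta+\nu}\,tr(P^2).$$
It then remains to check
$$\frac{\theta\nu}{\theta+\nu}\le k-\frac{t^2}{\theta+t}.$$
Since $k\ge t$, we have $k-\frac{t^2}{\theta+t}\ge t-\frac{t^2}{\theta+t}=\frac{\theta t}{\theta+t}$. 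Because $g$ is increasing and $t>\nu$, this is strictly greater than $\frac{\theta\nu}{\theta+\nu}$. So the inequality holds strictly in this range.

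For $k>n-\theta$, adding the remaining negative eigenvalues only decreases the left side, while the right side grows with $k$. The inequality is therefore strict there too.

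It remains to handle equality. Strictness for $t>\nu$ shows equality forces $t\le\nu$. Theorem \ref{th1} then forces $k=s=\nu$, i.e. $k=t=\nu$, with equal top $t$ eigenvalues, equal bottom $\theta$ eigenvalues, and $\lambda_{t+1}(M)\le\frac{tr(M)}{n}$. Conversely, these conditions make every Cauchy--Schwarz step tight. Together with the trivial case $M=\frac{tr(M)}{n}I$, this recovers the stated characterization.

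The main obstacle is the direction of monotonicity. The naive reduction to Theorem \ref{th1} through the inequality $\frac{t^2}{\theta+t}\le\frac{\nu^2}{\theta+\nu}$ fails, so the argument must instead use the fact that the partial sums stall at $S$ across the zero block. Combined with the $k\ge t$ lower bound $\frac{\theta t}{\theta+t}$, that is what makes the step go through.
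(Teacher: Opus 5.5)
Your argument is correct and is essentially the paper's proof. For $t\le\nu$ you invoke Theorem~\ref{th1} with $s=t$. For $\nu<t\le n-\theta$ you use that the partial sums stall at $\sum_{i=1}^{\nu}\lambda_i(P)\le\sqrt{\tfrac{\theta\nu}{\theta+\nu}\,tr(P^2)}$, and then conclude via $\tfrac{\theta\nu}{\theta+\nu}<\tfrac{\theta t}{\theta+t}\le k-\tfrac{t^2}{\theta+t}$. The paper gets the stall bound by citing Theorem~\ref{th1} with $k=s=\nu$, whereas you redo the same two Cauchy--Schwarz steps.

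In a write-up, delete the abandoned ``requires care'' display. The split at $k=n-\theta$ is also unnecessary, since $\lambda_i(P)\le 0$ for all $i>\nu$ handles every $k\ge t$ at once.
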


 \begin{proof} It is trivial if $M=\frac{tr(M)}{n}I$. Thus, we assume that $M\neq\frac{tr(M)}{n}I$. Then we have $\lambda_1(M)>\frac{tr(M)}{n}$ as $M$ is symmetric.
 If $t\leq \nu$, Theorem \ref{th1} directly implies the required result together with the equality cases.
 Otherwise, $\nu+1\leq t\leq n-\theta$.
Setting $s=\nu$ and $k=\nu$ in (\ref{e1}), we obtain
\begin{align*}
 \sum_{i=1}^{\nu}\lambda_i(M)\leq\frac{\nu}{n}\,tr(M)+\sqrt{tr\left(\Big(M-\frac{tr(M)}{n}I\Big)^2\right)\,\frac{\theta\nu}{\theta+\nu}}\,. 
\end{align*}
Using the above result with the fact that $\lambda_{i}=\frac{tr(M)}{n}$ for $\nu+1\leq i\leq t$, we obtain
 \begin{align}
  \sum_{i=1}^{k}\lambda_i(M)&\leq\frac{k-t}{n}\,tr(M)+ \sum_{i=1}^{t}\lambda_i(M)=\frac{k-\nu}{n}\,tr(M)+\sum_{i=1}^{\nu}\lambda_i(M)\nonumber\\[3mm]
  &\leq\frac{k}{n}\,tr(M)+\sqrt{tr\left(\Big(M-\frac{tr(M)}{n}I\Big)^2\right)\frac{\theta\nu}{\theta+\nu}}\nonumber\\[3mm]
  &<\frac{k}{n}\,tr(M)+\sqrt{tr\left(\Big(M-\frac{tr(M)}{n}I\Big)^2\right)\frac{\theta\,t}{\theta+t}}\nonumber\\[3mm]
   &\leq \frac{k}{n}\,tr(M)+\sqrt{tr\left(\Big(M-\frac{tr(M)}{n}I\Big)^2\right)\left(k-\frac{t^2}{\theta+t}\right)}\nonumber
 \end{align}
as $k\geq t$. The result holds strictly. This completes the proof of the theorem.
 \end{proof}

Next we give a more simple upper bound on the sum of the $k$ largest eigenvalues of a matrix.
 \begin{theorem} \label{th4} Let $M$ be an $n\times n$ symmetric matrix. Let $s$ be an integer such that 
 $$\lambda_s(M)>\frac{tr(M)}{n}~\mbox{ and }~\sum\limits_{i=1}^s\frac{\lambda_i(M)}{s}\geq \frac{tr(M^2)+tr(M)}{n}-\frac{(tr(M))^2}{n^2}.$$ 
Then for any $k\geq s$, we have
\begin{equation}
 \sum_{i=1}^{k}\lambda_i(M)\leq\frac{k}{n}\,tr(M)+\frac{n\theta}{2(\theta+s)}+\frac{n}{2(\theta+s)}\sqrt{\theta\big(k\theta+ks-s^2\big)}.  \label{nne1}
\end{equation}
The equality holds if and only if $k=s=1$ with 
$$\lambda_1(M)=\frac{tr(M^2)+tr(M)}{n}-\frac{(tr(M))^2}{n^2}>\frac{tr(M)}{n}\geq \lambda_{2}(M)$$ 
and 
$$\lambda_{n-\theta+1}(M)=\lambda_{n-\theta+2}(M)=\cdots=\lambda_n(M).$$
\end{theorem}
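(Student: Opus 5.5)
Set $P=M-\frac{tr(M)}{n}I$, as in the proof of Theorem~\ref{th1}, and write $X=\sum_{i=1}^s\lambda_i(P)$. Since $\lambda_i(P)=\lambda_i(M)-\frac{tr(M)}{n}$ and $tr(P^2)=tr(M^2)-\frac{(tr(M))^2}{n}$, the second hypothesis on $s$ becomes
$$\frac{X}{s}\geq \frac{tr(P^2)}{n},\qquad\text{that is,}\qquad tr(P^2)\leq \frac{n}{s}\,X .$$
Inequality (\ref{nne1}) is then equivalent to
$$\sum_{i=1}^k\lambda_i(P)\leq \frac{n\theta}{2(\theta+s)}+\frac{n}{2(\theta+s)}\sqrt{\theta(k\theta+ks-s^2)}.$$
The plan is to reuse the intermediate estimates from the proof of Theorem~\ref{th1}, but now to eliminate $tr(P^2)$ in favour of $X$ instead of eliminating $X$.

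\emph{Case $k=s$.} Combining (\ref{e3}) and (\ref{e4}) gives $\frac{\theta+s}{\theta s}X^2\leq tr(P^2)\leq \frac{n}{s}X$. Since $X>0$, this yields $X\leq \frac{n\theta}{\theta+s}$. The target right-hand side at $k=s$ is
$$\frac{n\theta}{2(\theta+s)}\bigl(1+\sqrt{s}\bigr)\geq \frac{n\theta}{\theta+s},$$
with equality only when $s=1$.

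\emph{Case $s+1\leq k\leq \nu$.} Start from (\ref{e6}) and substitute $tr(P^2)\leq nX/s$ to get
$$\sum_{i=1}^k\lambda_i(P)< g(X):=X+\sqrt{(k-s)\left(\frac{n}{s}X-\frac{\theta+s}{\theta s}X^2\right)},\qquad 0\leq X\leq \frac{n\theta}{\theta+s}.$$
Put $a=\frac{\theta+s}{\theta s}$, $b=\frac ns$, $c=k-s$. Then $bX-aX^2=a\bigl(R^2-(X-R)^2\bigr)$ with $R=\frac{b}{2a}=\frac{n\theta}{2(\theta+s)}$. Maximizing $X+\sqrt{ca}\sqrt{R^2-(X-R)^2}$ is elementary (Cauchy--Schwarz, or the circle picture) and gives
$$\max g=R\bigl(1+\sqrt{1+ca}\bigr),\qquad 1+ca=\frac{k\theta+ks-s^2}{\theta s}.$$
Hence
$$g(X)\leq \frac{n\theta}{2(\theta+s)}+\frac{n}{2(\theta+s)}\sqrt{\frac{\theta(k\theta+ks-s^2)}{s}},$$
which is at most the target because $s\geq1$. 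In this case the inequality is strict, inherited from the strict step in (\ref{e6}).

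\emph{Case $k\geq \nu+1$.} As at the end of the proof of Theorem~\ref{th1}, $\sum_{i=1}^k\lambda_i(P)\leq\sum_{i=1}^\nu\lambda_i(P)$. The bound already obtained for $\nu$ (the case $k=s$ if $\nu=s$, the previous case otherwise) is increasing in $k$, so the inequality holds here as well, strictly.

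\emph{Equality.} Equality can occur only when $k=s$, and then it forces $s=1$ and $X=\frac{n\theta}{\theta+s}$. That in turn requires equality in $tr(P^2)\leq nX/s$, which is the stated condition $\lambda_1(M)=\frac{tr(M^2)+tr(M)}{n}-\frac{(tr(M))^2}{n^2}$. It also requires equality in (\ref{e3}) and (\ref{e4}), which gives $\nu=1$, i.e.\ $\lambda_2(M)\leq \frac{tr(M)}{n}$, and equal negative eigenvalues. The converse is a direct check.

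The main obstacle is the optimization of $g$ and the comparison of constants, in particular verifying that the factor $1/\sqrt{s}$ that appears only improves the bound. Beyond that, the remaining care is in tracking strictness so that the equality characterization comes out exactly as stated.
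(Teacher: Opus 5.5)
Your proof is correct. For the main range $s+1\le k\le\nu$ it takes a genuinely simpler route than the paper; the cases $k=s$ and $k>\nu$ match the paper's.

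Both arguments start from (\ref{e6}). The paper eliminates $X=\sum_{i=1}^s\lambda_i(P)$ in favour of $tr(P^2)$ through $X\ge\frac{s}{n}tr(P^2)$. That only helps on the decreasing branch of $f$, so the paper must split into two cases:
\begin{itemize}
\item Case~1 ($\frac{s}{n}tr(P^2)\ge x_1$), where it maximizes $g$ and obtains $g(x_4)$;
\item Case~2, which needs a separate argument via the estimate (\ref{e7}), the bound (\ref{dn1}) and the polynomial inequality of Claim~\ref{c1}.
\end{itemize}
You eliminate $tr(P^2)$ instead, via $tr(P^2)\le\frac{n}{s}X$. Since the right-hand side of (\ref{e6}) is trivially increasing in $tr(P^2)$, no case split arises. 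Your $g(X)$ is exactly the paper's $g(x)$ under $x=\frac{n}{s}X$, and your circle/Cauchy--Schwarz maximization reproduces the value $g(x_4)$.

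What your route buys:
\begin{itemize}
\item Case~2 and Claim~\ref{c1} become unnecessary.
\item You obtain the sharper bound
$$\sum_{i=1}^k\lambda_i(P)\le\frac{n\theta}{2(\theta+s)}\Bigl(1+\sqrt{\tfrac{k\theta+ks-s^2}{\theta s}}\Bigr)$$
for every $k\ge s$, with strict inequality for $k>s$; at $k=s$ it equals $\frac{n\theta}{\theta+s}$.
\item For $s=2$ this is exactly Theorem~\ref{th2}, including its equality case. So your argument also proves that theorem without the separate inequality $R(x)>0$.
\item It makes transparent that the stated bound of Theorem~\ref{th4} loses a factor $\sqrt{s}$ under the root. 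That loss is precisely why equality there forces $s=1$.
\end{itemize}
The paper's Case~2, for its part, does not use the hypothesis $X/s\ge tr(P^2)/n$ at all, but that extra generality is not needed for this theorem.
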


\begin{proof}
If $\lambda_1(M)=\frac{tr(M)}{n}$, then $\lambda_2(M)=\lambda_n(M)=\frac{tr(M)}{n}$, which means the $s$ does not exist.
Otherwise, $\lambda_1(M)>\frac{tr(M)}{n}$, that is, $M\neq\frac{tr(M)}{n}I$ and $s\geq 1$. Let $P=M-\frac{tr(M)}{n}I$. Then we set $\nu=\nu(M)=\nu(P)$ and $\theta=\theta(M)=\theta(P)$.
 Since $tr(P^2)=tr(M^2)-\frac{(tr(M))^2}{n}$, it is equivalent to prove that for $k\geq s$,
 \begin{equation}
 \sum_{i=1}^{k}\lambda_i(P)\leq\frac{n\theta}{2(\theta+s)}+\frac{n}{2(\theta+s)}\sqrt{\theta\big(k\,\theta+k\,s-s^2\big)}~~~~\mbox{for  }\sum\limits_{i=1}^s\frac{\lambda_i(P)}{s}\geq \frac{tr(P^2)}{n},\label{ne2}
 \end{equation}
 with equality holding if and only if $k=s=1$ with $\lambda_1(P)=\frac{tr(P^2)}{n}>0\geq \lambda_2(P)$ and
 $\lambda_{n-\theta+1}(P)=\lambda_{n-\theta+2}(P)=\cdots=\lambda_n(P)$.

\vspace*{3mm}
 
 Combining Theorem \ref{th1} on $k=s$ with the condition that $\sum\limits_{i=1}^s\frac{1}{s}\lambda_i(P)\geq \frac{tr(P^2)}{n}$, we get
 $$\sum\limits_{i=1}^s\lambda_i(P)\leq\frac{n\theta}{\theta+s}\leq \frac{n\theta}{2(\theta+s)}+\frac{n}{2(\theta+s)}\sqrt{\theta\big(k\,\theta+k\,s-s^2\big)}.$$
 The first equality holds if and only if $s=\nu$, $\lambda_1(P)=\lambda_2(P)=\cdots=\lambda_s(P)=\frac{tr(P^2)}{n}$ and
 $\lambda_{n-\theta+1}(P)=\lambda_{n-\theta+2}(P)=\cdots=\lambda_n(P)$ and the second equality holds only for $k=s=1$.
 Hence we prove the case $k=s$ and together with equality cases.
 Next we will prove the inequality in (\ref{ne2}) strictly holds for $s+1\leq k\leq \nu$ as $\lambda_i(P)\leq 0$ for $i>\nu$. For this, we consider the following
two case: \vspace*{3mm}

\noindent
${\bf Case\,1.}$ $tr(P^2)\geq n\,\theta\,\sqrt{\displaystyle{\frac{tr(P^2)}{(\theta+s)(k\,\theta+k\,s-s^2)}}}=\displaystyle{\frac{n\,x_1}{s}}$ $(x_1$ is defined in the proof of Theorem \ref{th1}$)$. 

\vspace*{3mm}

In this case, $\sum\limits_{i=1}^s\lambda_i(P)\geq \frac{s}{n}tr(P^2)\geq x_1$. Recall that the function $f(x)$ defined in the proof of Theorem \ref{th1}, $f(x)$ is decreasing on $x\geq x_1$. Then from (\ref{e6}), we get
 \begin{eqnarray}
  \sum_{i=1}^{k}\lambda_i(P)&< &f\left(\sum\limits_{i=1}^s\lambda_i(P)\right)\leq f\left(\frac{s}{n}tr(P^2)\right)\nonumber\\[2mm]
  &=&\frac{s}{n}tr(P^2)+\sqrt{(k-s)\left(tr(P^2)-\frac{(tr(P^2))^2(\theta+s)\,s}{n^2\theta}\right)}\,.\label{eq1}
 \end{eqnarray}

 \vspace*{3mm}

 \noindent
 We now consider a function
 \begin{eqnarray}
 g(x)&=&\frac{s\,x}{n}+\sqrt{(k-s)\left(x-\frac{s\,(\theta+s)x^2}{n^2\theta}\right)}\,,~~~~0\leq x\leq \frac{n^2\theta}{s\,(\theta+s)}.\nonumber
 \end{eqnarray}
 Note that $g(x)$ can be rewriten as
 \begin{equation}
g(x)=\frac{s\,x}{n}+\sqrt{\frac{(s-k)(\theta+s)s}{\theta}\left(\frac{x}{n}-\frac{\theta\,n}{2s\,(\theta+s)}\right)^2+\frac{(k-s)\
     n^2\theta}{4s(\theta+s)}}. \label{sm1}
 \end{equation}
  Then we have
 \begin{equation}
 g'(x)= \frac{s}{n}+\frac{\sqrt{k-s}}{2}\times\frac{\displaystyle{1-\frac{2s(\theta+s)}{n^2\theta}x}}{\sqrt{\displaystyle{x-\frac{s(\theta+s)x^2}{n^2\theta}}}}\,,~~~0<x<\frac{n^2\theta}{s\,(\theta+s)}.\nonumber
 \end{equation}
 From $g'(x)=0$, one can easily get
 \begin{eqnarray}
  &&x_3=\frac{n^2\theta}{2s(\theta+s)}-\frac{n^2\theta}{2s(\theta+s)}\sqrt{\frac{\theta\,s}{k\theta+ks-s^2}}\nonumber\\[2mm]
  \mbox{and }&&x_4=\frac{n^2\theta}{2s(\theta+s)}+\frac{n^2\theta}{2s(\theta+s)}\sqrt{\frac{\theta\,s}{k\theta+ks-s^2}}\,.\nonumber
 \end{eqnarray}

 \noindent
 For convenience, let $x_5=\displaystyle{\frac{n^2\theta}{s(\theta+s)}}$. By the closed interval method on the continuous function $g(x)$ for $x\in \left[0,x_5\right]$, we have
 $$g(x)\leq \max\Big\{g(0),\,g(x_3),\,g(x_4),\,g_1(x_5)\Big\}.$$
 After simple calculation, we get $g(0)=0$ and $g(x_5)=\frac{n\theta}{\theta+s}$. By (\ref{sm1}), one can easily obtain that

 $$g(x_3)<g(x_4)=\frac{n\theta}{2(\theta+s)}+\frac{n}{2(\theta+s)}
 \sqrt{\frac{\theta(k\theta+ks-s^2)}{s}}.$$

 \noindent
 As $k>s$, then $\frac{k\theta+ks-s^{2}}{s}>\theta$, which follows $g(x_5)<g(x_4)$. Hence $g(x)\leq g(x_4)$.
 Applying the above results in (\ref{eq1}), we get
  \begin{eqnarray}
 &&\sum_{i=1}^{k}\lambda_i(P)< g(tr(P^2))\leq g(x_4)\leq\frac{n\theta}{2(\theta+s)}+\frac{n}{2(\theta+s)}
   \sqrt{\theta\big(k\,\theta+k\,s-s^2\big)},\nonumber
 \end{eqnarray}
 which implies that the inequality in (\ref{ne2}) strictly holds.
\vspace*{3mm}

\noindent
${\bf Case\,2.}$ $tr(P^2)<n\,\theta\,\sqrt{\displaystyle{\frac{tr(P^2)}{(\theta+s)(k\,\theta+k\,s-s^2)}}}=\displaystyle{\frac{n\,x_1}{s}}$. 

\vspace*{3mm}

As $x_1= \theta\,s\sqrt{\displaystyle{\frac{tr(P^2)}{(\theta+s)(k\,\theta+k\,s-s^2)}}}$, we have
    \begin{eqnarray}
  k\times tr(P^2)&<&\frac{n^2{\theta}^2k}{(\theta+s)(k\,\theta+k\,s-s^2)}\nonumber\\[2mm]
   &=&\frac{n^2{\theta}^2}{(\theta+s)(\theta+s-s^2/k)}\nonumber\\[2mm]
   &\leq &\frac{n^2{\theta}^2}{(\theta+s)(\theta+s/(s+1))} \label{dn1}
 \end{eqnarray}
as $k\geq s+1$. 

\vspace*{3mm}

Now let
 $$h(x)=\left(\frac{n\,x}{2(x+s)}+\frac{n}{2(x+s)}\sqrt{x\big(s\,x+x+s\big)}\right)^2-\frac{n^2x(2x-1)}{2(x+s)(x+s/(s+1))}.$$

 \vspace*{3mm}

\begin{claim}\label{c1} $h(x)>0$ for $x>0$.
\end{claim}

\vspace*{3mm}

\noindent
{\bf Proof of Claim \ref{c1}.} Now we can rewrite $h(x)$ as
   $$h(x)=\frac{n^2}{4(x+s)^2(sx+x+s)}p(x),$$ 
where
   $$ p(x)=(s^2-s-2)x^3-(2s^2-s-2)x^2+(3s^2+2s)x+2\sqrt{((s+1)x^2+sx)^3}.$$
Since $\Big((s+1)x^2+sx\Big)^3>(s+1)^2x^6$, we get 
 $$p(x)>(s^2+s)x^3-(2s^2-s-2)x^2+(3s^2+2s)x.$$
Due to the fact that 
$$(2s^2-s-2)^2-4(s^2+s)(3s^2+2s)=-8s^4 - 24s^3 - 15s^2 + 4s + 4<0$$ 
for $s\geq 1$, we conclude $(s^2+s)x^2-(2s^2-s-2)x+(3s^2+2s)$ has no real roots. By combining the above results, we obtain that $p(x)>0$ for $s\geq 1$ and $x>0$. 
Hence $h(x)>0$ for $x>0$,  which completes the proof of  ${\bf Claim\,\ref{c1}}$.
 
\vspace*{3mm}

From ${\bf Claim\,\ref{c1}}$ with (\ref{dn1}), we obtain
\begin{align}
\frac{n\,\theta}{2(\theta+s)}+\frac{n}{2(\theta+s)}\sqrt{\theta\,\Big(s\,\theta+\theta+s\Big)}&>\sqrt{\frac{n^2\theta(2\theta-1)\,(s+1)}{2(\theta+s)\Big(\theta\,(s+1)+s\Big)}}\nonumber\\[3mm]
&\geq \sqrt{k\,\left(1-\frac{1}{2\theta}\right)\,tr(P^2)}.\label{kcd1}
\end{align}

\vspace*{3mm}

Since
   $$(tr(P))^2=\left(\sum_{i=1}^{n}\lambda_i(P)\right)^2=tr(P^2)+2\sum_{i<j}\lambda_i(P)\lambda_j(P)~\mbox{ and }~tr(P)=0,$$ 
we obtain
   $tr(P^2)=-2\sum_{i<j}\lambda_i(P)\lambda_j(P)$.
Then 
    \begin{eqnarray}
 \left(\sum_{i=1}^{n}|\lambda_i(P)|\right)^2&=& tr(P^2)+2\sum_{i<j}|\lambda_i(P)\lambda_j(P)|\nonumber\\[2mm]
 &\geq &  tr(P^2)+\left|2\sum_{i<j}\lambda_i(P)\lambda_j(P)\right|\nonumber\\[2mm]
 &=&  2\,tr(P^2).\nonumber
 \end{eqnarray}
 \noindent
 Using this result, we get
  \begin{equation}
\sum_{i=1}^{\theta}\lambda_{n-i+1}^2(P)\geq
\frac{\left(\sum\limits_{i=1}^{\theta}|\lambda_{n-i+1}(P)|\right)^2}{\theta}=
\frac{\left(\sum\limits_{i=1}^{n}|\lambda_{i}(P)|\right)^2}{4\,\theta}\geq
\frac{tr(P^2)}{2\,\theta}\,.\label{e7}
 \end{equation}

 \vspace*{3mm}

 \noindent
 Using the above result with (\ref{kcd1}), we have
 \begin{eqnarray}
 \sum_{i=1}^{k}\lambda_i(P)&\leq& \sqrt{k\sum_{i=1}^{k}\lambda_i^2(P)}\nonumber\\[3mm]
     &\leq& \sqrt{k\left(tr(P^2)-\sum_{i=1}^{\theta}\lambda_{n-i+1}^2(P)\right)} \nonumber\\[3mm]
     &\leq& \sqrt{k\left(1-\frac{1}{2\,\theta}\right)\,tr(P^2)}\nonumber\\[3mm]
     &<&\frac{n\,\theta}{2(\theta+s)}+\frac{n}{2(\theta+s)}\sqrt{\theta\big(s\,\theta+\theta+s\big)}\nonumber\\[3mm]
     &\leq & \frac{n\,\theta}{2(\theta+s)}+\frac{n}{2(\theta+s)}\sqrt{\theta(k\,\theta+k\,s-s^2)}\nonumber
 \end{eqnarray}
as $k\geq s+1$. This completes the proof.
\end{proof}

\begin{theorem} \label{th2} Let $M$ be an $n\times n$ symmetric matrix. If 
$$\lambda_2(M)> \frac{tr(M)}{n}~\mbox{ and }~\lambda_1(M)+\lambda_2(M)\geq \frac{2\,tr(M^2)+2\,tr(M)}{n}-\frac{2(tr(M))^2}{n^2},$$
then for any $k\geq 2$, we have
  \begin{equation}
 \sum_{i=1}^{k}\lambda_i(M)\leq\frac{k}{n}\,tr(M)+\frac{n\theta}{2(\theta+2)}+\frac{n}{2(\theta+2)}\sqrt{\frac{\theta\big(k\theta+2k-4\big)}{2}}.  \label{nne1}
 \end{equation}
 The equality holds if and only if $k=2$ with 
 $$\lambda_1(M)=\lambda_2(M)=\frac{tr(M^2)+tr(M)}{n}-\frac{(tr(M))^2}{n^2}>\frac{tr(M)}{n}\geq \lambda_{3}(M)$$
and $\lambda_{n-\theta+1}(M)=\lambda_{n-\theta+2}(M)=\cdots=\lambda_n(M)$.
 \end{theorem}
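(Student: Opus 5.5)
Set $P=M-\frac{tr(M)}{n}I$, so $tr(P)=0$ and $tr(P^2)=tr(M^2)-\frac{(tr(M))^2}{n}$. The hypotheses become $\lambda_2(P)>0$ (so $\nu\ge 2$) and $\lambda_1(P)+\lambda_2(P)\ge \frac{2}{n}tr(P^2)$. This is exactly the hypothesis of Theorem \ref{th4} with $s=2$. Subtracting $\frac{k}{n}tr(M)$, it suffices to prove
$$\sum_{i=1}^k\lambda_i(P)\le \frac{n\theta}{2(\theta+2)}+\frac{n}{2(\theta+2)}\sqrt{\frac{\theta(k\theta+2k-4)}{2}}$$
for $k\ge 2$. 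The target is stronger than Theorem \ref{th4} at $s=2$, because of the factor $\frac12$ under the root. So I will rerun the proof of Theorem \ref{th4} with $s=2$ and keep the factor $s$ that was discarded there.

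\textbf{Case $k=2$.} Apply Theorem \ref{th1} with $k=s=2$ to get $\sum_{i=1}^2\lambda_i(P)\le\sqrt{\frac{2\theta}{\theta+2}tr(P^2)}$. Combined with $tr(P^2)\le \frac n2(\lambda_1(P)+\lambda_2(P))$, this gives $\lambda_1(P)+\lambda_2(P)\le \frac{n\theta}{\theta+2}$. At $k=2$ the right-hand side is $\frac{n\theta}{2(\theta+2)}+\frac{n}{2(\theta+2)}\sqrt{\theta^2}=\frac{n\theta}{\theta+2}$, so the two bounds agree. The equality case is read off from Theorem \ref{th1}: $\nu=2$, $\lambda_1=\lambda_2=\frac{tr(P^2)}{n}$, and equal negative eigenvalues.

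\textbf{Case $k\ge\nu+1$.} Here $\lambda_i(P)\le0$ for $i>\nu$, so $\sum_{i=1}^k\lambda_i(P)\le\sum_{i=1}^\nu\lambda_i(P)$. The right-hand side is increasing in $k$, so this case follows from the strict inequality for $k=\nu$. It therefore remains to treat $3\le k\le\nu$, with strict inequality.

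\textbf{Case 1 ($3\le k\le\nu$ and $tr(P^2)\ge \frac{n x_1}{2}$).} I reuse (\ref{e6}) and the monotonicity of $f$ beyond $x_1$, exactly as in (\ref{eq1}). This gives $\sum_{i=1}^k\lambda_i(P)<g(tr(P^2))\le g(x_4)$. From (\ref{sm1}), the value at the vertex is
$$g(x_4)=\frac{n\theta}{2(\theta+s)}+\frac{n}{2(\theta+s)}\sqrt{\frac{\theta(k\theta+ks-s^2)}{s}}.$$
With $s=2$ this is exactly the target. I will double-check that computation, and that $g(x_5)=\frac{n\theta}{\theta+2}<g(x_4)$ for $k>2$.

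\textbf{Case 2 ($3\le k\le\nu$ and $tr(P^2)<\frac{n x_1}{2}$).} As in (\ref{dn1}), $k\,tr(P^2)<\frac{n^2\theta^2}{(\theta+2)(\theta+2-4/k)}$, and by (\ref{e7}) $\sum_{i=1}^k\lambda_i(P)\le\sqrt{k(1-\frac1{2\theta})tr(P^2)}$. Since the target increases in $k$, it suffices to show, for every $k\ge3$ and $x=\theta\ge1$,
$$\frac{x^2(2x-1)}{2x(x+2)(x+2-4/k)}\le \frac{1}{4(x+2)^2}\Bigl(x+\sqrt{x(kx+2k-4)/2}\Bigr)^2.$$
The right-hand side grows with $k$ while the left-hand side decreases with $k$ (its denominator grows), so the inequality only needs checking at $k=3$. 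There it becomes a one-variable algebraic inequality, the analogue of Claim \ref{c1}. I expect this to be the main obstacle, since the $\frac12$ factor makes the margin thinner. My plan is to clear denominators, isolate the square-root term, square, and show the resulting cubic or quartic polynomial is positive for $x\ge1$, checking small $\theta$ by hand if needed.
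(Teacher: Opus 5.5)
Your proposal is correct and matches the paper's proof essentially step for step. The paper likewise passes to $P$, settles $k=2$ through Theorem \ref{th1}, and reuses Case 1 of Theorem \ref{th4} with $s=2$ while keeping the factor $1/s$ in $g(x_4)$. In Case 2 it combines (\ref{e7}) with (\ref{dn1}), using $\theta+2-4/k\ge\theta+\frac{2}{3}$ for $k\ge 3$.

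The one-variable inequality you flag as the main obstacle is exactly the paper's $R(x)>0$, and it holds with room to spare. The paper proves it via the bound $(3x^2+2x)^3>9x^4(\sqrt{3}\,x+\frac{3}{2})^2$; your squaring plan also works. For the case $\nu=2$, $k\ge 3$, you need the right-hand side to be \emph{strictly} increasing in $k$, which holds because $\theta\ge 1$.
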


 \begin{proof}
 Let $P=M-\frac{tr(M)}{n}I$. It is equivalent to prove that for $k\geq 2$,

 \begin{eqnarray}
 \sum_{i=1}^{k}\lambda_i(P)\leq\frac{n\theta}{2(\theta+2)}+\frac{n}{2(\theta+2)}\sqrt{\frac{\theta\big(k\,\theta+2k\,-4\big)}{2}}~~~~\mbox{for  }\frac{\lambda_1(P)+\lambda_2(P)}{2}\geq \frac{tr(P^2)}{n},\label{ne3}
 \end{eqnarray}
 with equality holding if and only if $k=2$ with $\lambda_1(P)=\lambda_2(P)=\frac{tr(P^2)}{n}>0\geq \lambda_3(P)$ and
 $\lambda_{n-\theta+1}(P)=\lambda_{n-\theta+2}(P)=\cdots=\lambda_n(P)$.
 The proof of the above result is similar as the proof of Theorem \ref{th4} by taking $s=2$. Now we explain it in detail. By a similar way as the proof of Theorem \ref{th4} with $s=2$ (before {\bf Case\,1}), we obtain that the result in (\ref{ne3}) holds for $k=2$, together with the equality cases. It remains to show the inequality in (\ref{ne3}) strictly holds
for $3\leq k\leq \nu$. If $tr(P^2)\geq \frac{nx_1}{2}$, by the result of {\bf Case\,1} in the proof of Theorem \ref{th4}, we get
 $$\sum_{i=1}^{k}\lambda_i(P)<g(x_4)=\frac{n\theta}{2(\theta+2)}+\frac{n}{2(\theta+2)} \sqrt{\frac{\theta\big(k\,\theta+2k\,-4\big)}{2}}.$$
Otherwise, $tr(P^2)<\frac{nx_1}{2}$. By the result of {\bf Case\,2} in the proof of Theorem \ref{th4}, we have
\begin{equation*}
 \sum_{i=1}^{k}\lambda_i(P)\leq \sqrt{k\,tr(P^2)\left(1-\frac{1}{2\,\theta}\right)}.
\end{equation*}
Appying the result in (\ref{dn1}) with $s=2$ to the above result, we get
\begin{equation}
 \sum_{i=1}^{k}\lambda_i(P)\leq \sqrt{\frac{n^2\theta(2\theta-1)}{2(\theta+2)(\theta+2/3)}}.\label{nsw1}
\end{equation}
 Let
 \begin{eqnarray*}
 R(x)=\left(\frac{n\,x}{2(x+2)}+\frac{n}{2(x+2)}\sqrt{\frac{x\big(3x+2\big)}{2}}\right)^2
   -\frac{n^2x(2x-1)}{2(x+2)(x+\frac{2}{3})}~~\mbox{for }x>0.\label{lnn1}
 \end{eqnarray*}

 \noindent
 We can rewrite $R(x)$ as $R(x)=\frac{n^2}{4(x+2)(3x+2)}S(x)$, where
$$ S(x)=-\frac{9}{2}x^3 - 10x^2 + 14x + 2\sqrt{\frac{(3x^2 +2x)^3}{2}}.$$
  As $(3x^2+2x)^3>9x^4(\sqrt{3}x+\frac{3}{2})^2$, we get $S(x)>(3\sqrt{6}-\frac{9}{2})x^3-(10-\frac{9}{2}\sqrt{2})x^2+14x>0$ for $x>0$. Hence $R(x)>0$ for $x>0$.
Using this fact in (\ref{nsw1}), we have

  \begin{eqnarray}
 \sum_{i=1}^{k}\lambda_i(P) &<&\frac{n\,\theta}{2(\theta+2)}+\frac{n}{2(\theta+2)}\sqrt{\frac{\theta\big(3\,\theta+2\big)}{2}}\nonumber\\[2mm]
     &\leq & \frac{n\,\theta}{2(\theta+2)}+\frac{n}{2(\theta+2)}\sqrt{\frac{\theta(k\,\theta+2k\,-4)}{2}}~~\mbox{  as } k\geq 3\,.\nonumber
 \end{eqnarray}
 This completes the proof.
 \end{proof}

\begin{remark}
 Note that Theorem \ref{th2} is better than Theorem \ref{th4} by taking
 $s=2$. However, they are incomparable in general.
\end{remark}

\section{Applications}
This section applies the main results from the previous section to two well-known graph matrices: the adjacency matrix and the Laplacian matrix. Throughout this section, all graphs are assumed to be simple, unweighted, and free of isolated vertices. As usual, $K_n$ and $K_{n_1,\,n_2,\ldots,\,n_k}$ $(\sum_{i=1}^k n_i=n)$ denote the complete graph and the complete $k$-partite graph on $n$ vertices, respectively.

\subsection{Useful results}
In this subsection, we will list some well-known results on
eigenvalues of adjacency matrix and Laplacian matrix of a graph,
which will be used in the next subsections. First there is a result
on spectral radius of a graph.

 \begin{lemma} \label{le1} {\rm \cite{CS}} Let $G$ be a graph of order $n$ and size $m$.
 Then $\lambda_1(G)\geq \frac{2\,m}{n}$ with equality holding if and
 only if $G$ is a regular graph.
 \end{lemma}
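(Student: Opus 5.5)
The plan is a direct Rayleigh quotient argument. Let $\mathbf{j}$ be the all-ones vector in $\mathbb{R}^n$ and $A=A(G)$. Since $A$ is real symmetric, the Courant--Fischer (Rayleigh) characterization gives
$$\lambda_1(G)=\max_{x\neq 0}\frac{x^{T}Ax}{x^{T}x}\geq \frac{\mathbf{j}^{T}A\mathbf{j}}{\mathbf{j}^{T}\mathbf{j}}.$$
The numerator $\mathbf{j}^{T}A\mathbf{j}$ is the sum of all entries of $A$, which equals the sum of all vertex degrees, namely $2m$. The denominator is $n$. Hence $\lambda_1(G)\geq \frac{2m}{n}$.

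For the equality case, suppose first that $\lambda_1(G)=\frac{2m}{n}$. Then $\mathbf{j}$ attains the maximum of the Rayleigh quotient. I will use the standard fact that a maximizer of $x^{T}Ax/x^{T}x$ is an eigenvector for $\lambda_1$. This follows by expanding $x$ in an orthonormal eigenbasis: the quotient is a convex combination of the eigenvalues, so it equals $\lambda_1$ only if all weight lies on eigenvectors belonging to $\lambda_1$. Therefore $A\mathbf{j}=\lambda_1\mathbf{j}$. Since $(A\mathbf{j})_v=d(v)$, every vertex has degree $\lambda_1$, and $G$ is regular.

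Conversely, if $G$ is $r$-regular, then $A\mathbf{j}=r\mathbf{j}$, so $r$ is an eigenvalue and $r=\frac{2m}{n}$. Every eigenvalue is bounded in absolute value by the maximum row sum $r$ (by Gershgorin, or by bounding $|\lambda|\,|x_u|\leq \sum_{w\sim u}|x_w|$ at a coordinate $u$ of maximum modulus). Thus $\lambda_1=r=\frac{2m}{n}$.

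No step here is a real obstacle. The only point that needs care is the eigenvector characterization of Rayleigh quotient maximizers used in the equality case, and the orthonormal eigenbasis argument above settles it.
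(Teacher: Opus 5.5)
Your argument is correct and complete. The Rayleigh quotient at $\mathbf{j}$ gives the bound, the eigenbasis expansion correctly forces $\mathbf{j}$ to be a $\lambda_1$-eigenvector (hence all degrees are equal) in the equality case, and the row-sum bound settles the converse. The paper gives no proof of this lemma---it is simply quoted from \cite{CS}---and what you wrote is the standard proof of that cited fact.
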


  \begin{lemma} \label{le2} {\rm \cite{P}} Let $G$ be a graph of order $n$ and positive inertia $\nu$.
 Then $\nu=1$  if and only if $G$ is a complete multipartite graph.
 \end{lemma}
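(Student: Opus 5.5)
We argue the two directions separately. The forward direction uses a quadratic-form description of $A(G)$. The converse uses a forbidden induced subgraph together with Cauchy interlacing. Throughout we use the standing assumption of this section that $G$ has no isolated vertices, so $G$ has at least one edge and $\lambda_1(G)>0$, i.e.\ $\nu\geq 1$. (For the empty graph, $\nu=0$ and the equivalence would fail, so this assumption is essential.)

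\emph{Complete multipartite implies $\nu=1$.} Let $G=K_{n_1,\ldots,n_k}$ with parts $V_1,\ldots,V_k$. For $x\in\mathbb{R}^n$ put $y_j=\sum_{v\in V_j}x_v$. Then
$$x^TA(G)x=\Big(\sum_{j=1}^k y_j\Big)^2-\sum_{j=1}^k y_j^2=y^T(J_k-I_k)y,$$
since two vertices are adjacent exactly when they lie in different parts. The matrix $J_k-I_k$ has eigenvalues $k-1$ (once) and $-1$ (with multiplicity $k-1$).

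Suppose $\lambda_2(G)>0$. By Courant--Fischer there is a $2$-dimensional subspace $U$ on which $x^TA(G)x>0$ for all $x\neq 0$. The linear map $x\mapsto y$ is injective on $U$, because $y=0$ would give $x^TA(G)x=0$. Hence its image is a $2$-dimensional subspace on which $J_k-I_k$ is positive definite. This contradicts the fact that $J_k-I_k$ has only one positive eigenvalue. Therefore $\nu=1$.

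\emph{$\nu=1$ implies complete multipartite.} Suppose $G$ is not complete multipartite. Then non-adjacency is not an equivalence relation on $V(G)$, so it fails to be transitive. This gives vertices $u,v,w$ with $u\not\sim v$, $u\not\sim w$ and $v\sim w$, i.e.\ an induced $K_1\cup K_2$. Since $u$ is not isolated, it has a neighbour $x\notin\{v,w\}$. The induced subgraph $H$ on $\{u,v,w,x\}$ is then one of three graphs:
\begin{itemize}
\item $2K_2$, if $x$ is adjacent to neither $v$ nor $w$;
\item $P_4$, if $x$ is adjacent to exactly one of $v,w$;
\item the paw (a triangle with a pendant vertex), if $x$ is adjacent to both $v$ and $w$.
\end{itemize}
For $2K_2$ and $P_4$ we have $\lambda_2(H)=1$ and $\lambda_2(H)=\frac{\sqrt5-1}{2}$ respectively, both positive. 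For the paw, its characteristic polynomial $t^4-4t^2-2t+1$ takes the value $1$ at $t=0$ and the value $-4$ at $t=1$, so it has a root in $(0,1)$. Together with $\lambda_1(H)>0$, this gives two positive eigenvalues, so again $\lambda_2(H)>0$.

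By Cauchy interlacing for the principal submatrix $A(H)$ of $A(G)$, we get $\lambda_2(G)\geq\lambda_2(H)>0$. Hence $\nu\geq 2$, a contradiction.

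The main step needing care is this case analysis on the fourth vertex $x$. In particular, the paw requires an explicit check of its second eigenvalue, which the sign change of its characteristic polynomial on $[0,1]$ supplies.
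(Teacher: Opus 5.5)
The paper does not prove Lemma \ref{le2}. It is quoted from the literature and used as a black box, so there is no in-paper argument to match. Your proof is correct and self-contained.

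Your converse is the classical argument for this result (usually attributed to Smith). You extract an induced $K_1\cup K_2$ from the failure of transitivity of non-adjacency, extend it by a neighbour of $u$ to an induced $2K_2$, $P_4$ or paw, and then apply interlacing.

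Your forward direction uses the identity $A(G)=B^{T}(J_k-I_k)B$, where $B$ is the $k\times n$ part-indicator matrix. Your injectivity argument then proves the general fact $\nu(B^{T}CB)\le\nu(C)$. Within the paper's own toolkit, the natural alternative is the factorization (\ref{sunm1}) from the proof of Lemma \ref{le4}. There
$$\mathcal{F}(x)\Big/\prod_j(x+m_j)=1-\sum_i \frac{a_im_i}{x+m_i}.$$
This function is strictly increasing on $[0,\infty)$, equals $1-p<0$ at $x=0$, and tends to $1$. Hence $\mathcal{F}$ has exactly one positive root, and the remaining factors contribute only $0$ and the $-m_i$. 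The polynomial route yields the actual spectrum, which the paper needs anyway for Lemma \ref{le4}. Your route needs no computation and works for any matrix of the form $B^{T}CB$.

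Two small tightenings are worth making.
\begin{itemize}
\item For the paw, ``together with $\lambda_1(H)>0$'' should say why the root in $(0,1)$ is not $\lambda_1(H)$ itself. Either note that $\lambda_1(H)\ge 2m/n=2$ by Lemma \ref{le1}, or that $\phi(1)=-4<0$ forces a second root in $(1,\infty)$.
\item The no-isolated-vertex hypothesis is genuinely consumed when you choose the neighbour $x$ of $u$. A convention-free example showing it cannot be dropped is $K_2\cup K_1$, which has $\nu=1$ but is not complete multipartite. Your empty-graph example depends on whether $\overline{K_n}$ is counted as complete $1$-partite.
\end{itemize}
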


  \begin{lemma} \label{cor5} {\rm \cite{B}}
    Let $G$ be a connected regular graph with exactly three distinct eigenvalues. Then $G$ is strongly regular.
  \end{lemma}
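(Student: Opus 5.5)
The plan is to prove Lemma \ref{cor5} directly by a polynomial identity in $A=A(G)$, using Hoffman's classical argument. Let $G$ be connected and $r$-regular of order $n$. Write its three distinct adjacency eigenvalues as $r>\alpha>\beta$.

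The first step is to locate the eigenvalue $r$. Since $G$ is $r$-regular, $A\mathbf{j}=r\mathbf{j}$, where $\mathbf{j}$ is the all-ones vector. Since $G$ is connected, the Perron--Frobenius theorem says $r=\lambda_1(G)$ is a simple eigenvalue with eigenspace spanned by $\mathbf{j}$. Because $A$ is symmetric, the orthogonal complement $\mathbf{j}^{\perp}$ is $A$-invariant and is spanned by eigenvectors for $\alpha$ and $\beta$ only.

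The key step is to compare the matrix $Q=(A-\alpha I)(A-\beta I)$ with a multiple of $J$, the all-ones matrix. The matrix $Q$ annihilates every eigenvector in $\mathbf{j}^{\perp}$, and $Q\mathbf{j}=(r-\alpha)(r-\beta)\mathbf{j}$. The matrix $\frac{(r-\alpha)(r-\beta)}{n}J$ acts the same way on $\mathbf{j}$ and on $\mathbf{j}^{\perp}$. Hence, using an orthonormal eigenbasis,
$$A^2=(\alpha+\beta)A-\alpha\beta I+cJ,\qquad c=\frac{(r-\alpha)(r-\beta)}{n}>0.$$
I expect this identity to be the only real content of the proof. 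The point to check carefully is that simplicity of $r$, which is where connectivity enters, is exactly what makes $Q$ a rank-one matrix proportional to $J$.

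The last step is to read off the parameters entrywise. For distinct vertices $x,y$, $(A^2)_{xy}$ is the number of common neighbours of $x$ and $y$. If $x\sim y$, this number equals $\alpha+\beta+c$. If $x\not\sim y$, it equals $c$. These values are independent of the pair, and they are nonnegative integers because they count vertices. The diagonal entries give the consistency relation $r=-\alpha\beta+c$. It remains to rule out the degenerate cases. $G$ is not complete, since $K_n$ has only two distinct eigenvalues. $G$ is not edgeless, since it is connected with three eigenvalues and hence $n\ge 3$. Therefore $G$ is strongly regular with parameters $\big(n,r,\alpha+\beta+c,c\big)$.
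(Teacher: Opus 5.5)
Your proof is correct and complete. It is Hoffman's classical polynomial argument: connectivity makes $r$ a simple eigenvalue, which forces $(A-\alpha I)(A-\beta I)=\frac{(r-\alpha)(r-\beta)}{n}J$, and the strongly regular parameters are then read off entrywise. The paper gives no proof of its own; it quotes the lemma from the literature, and your argument is the standard proof of this fact.
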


Next we present some results on the Laplacian spectrum of a graph.
  \begin{lemma} \label{cor3}  {\rm \cite{B}}
    Let $G$ be a graph with order $n$.
    Then $S_{L+aJ}(G)=\{\mu_1,\dots,\mu_{n-1},na\}$, where $J$ is a
    matrix whose entries all are ones.
  \end{lemma}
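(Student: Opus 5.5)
The statement to prove is Lemma~\ref{cor3}: the spectrum of $L+aJ$ is $\{\mu_1,\dots,\mu_{n-1},na\}$, where $\mu_n=0$ is the Laplacian eigenvalue that gets replaced by $na$. The plan is to exhibit an orthonormal eigenbasis shared by $L$ and $J$. The key inputs are that every row sum of $L=D-A$ is zero, so $L\mathbf{1}=0$, and that $J=\mathbf{1}\mathbf{1}^T$.

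First I note that $L$ is real symmetric. Since $L\mathbf{1}=0$, the normalized all-ones vector $u_n=\mathbf{1}/\sqrt n$ is an eigenvector of $L$ with eigenvalue $0$, and $0=\mu_n$ because $L$ is positive semidefinite ($x^TLx=\sum_{ij\in E}(x_i-x_j)^2\ge 0$). By the spectral theorem, I then complete $u_n$ to an orthonormal eigenbasis $u_1,\dots,u_{n-1},u_n$ of $L$ with $Lu_i=\mu_i u_i$. This works because the orthogonal complement $\mathbf{1}^\perp$ is $L$-invariant: if $x\perp\mathbf{1}$, then $\mathbf{1}^TLx=(L\mathbf{1})^Tx=0$.

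Next I compute the action of $L+aJ$ on this basis. For $i\le n-1$ we have $u_i\perp\mathbf{1}$, so $Ju_i=\mathbf{1}(\mathbf{1}^Tu_i)=0$ and hence $(L+aJ)u_i=\mu_iu_i$. For $i=n$ we get $(L+aJ)u_n=0+a\,\mathbf{1}\mathbf{1}^T\mathbf{1}/\sqrt n=na\,u_n$. So the $n$ orthonormal vectors are eigenvectors of $L+aJ$ with eigenvalues $\mu_1,\dots,\mu_{n-1},na$. Counted with multiplicity, this is the whole spectrum.

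There is no real obstacle. The only point needing care is that the eigenvalue removed from the Laplacian spectrum is exactly $\mu_n=0$, attached to $\mathbf{1}$. When $G$ is disconnected, $0$ has multiplicity greater than one, and the other null vectors must still be chosen orthogonal to $\mathbf{1}$; invariance of $\mathbf{1}^\perp$ guarantees this.
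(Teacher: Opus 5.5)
Your proof is correct: the orthonormal eigenbasis of $L$ containing $u_n=\mathbf{1}/\sqrt{n}$ also diagonalizes $J=\mathbf{1}\mathbf{1}^T$, and because $u_n$ accounts for exactly one copy of $\mu_n=0$, the remaining basis vectors carry precisely $\mu_1,\dots,\mu_{n-1}$, which gives the full spectrum of $L+aJ$ with multiplicities. The paper gives no proof of its own (it quotes the lemma from Brouwer and Haemers), and the standard argument there is essentially yours, usually phrased as simultaneous diagonalization of the commuting matrices $L$ and $J$, using $LJ=JL=0$.
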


   \begin{lemma} \label{newle1} {\rm \cite{DM}} Let $G$ be a graph of order $n$ and size $m$.
 Then $\mu_2(G)\geq \frac{2m}{n}$ unless $G\cong K_{1,\,n-1}$. Moreover, the equality holds if and only
 if $G\cong K_{n/2,\,n/2}$ $(n$ is even$)$.
 \end{lemma}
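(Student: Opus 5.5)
The plan is to combine a degree-based lower bound on $\mu_2$ with a variational argument for the graphs where that bound is too weak. The main tool is the Courant--Fischer characterization
$$\mu_2(G)=\max_{\dim U=2}\ \min_{0\neq x\in U}\frac{x^{T}L(G)x}{x^{T}x},\qquad x^{T}L(G)x=\sum_{ij\in E(G)}(x_i-x_j)^2 .$$
Let $d_1\geq d_2\geq\cdots\geq d_n$ be the degrees and $\bar d=\frac{2m}{n}$.

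\emph{Step 1 (large second degree).} First show that $\mu_2(G)\geq d_2$ whenever $G\not\cong K_{1,n-1}$. This is the known Li--Pan-type bound.
\begin{itemize}
\item If the two vertices $u,v$ of largest degree are non-adjacent, take $U=\mathrm{span}\{e_u,e_v\}$. Then $x^{T}Lx=d_ua^2+d_vb^2\geq d_2(a^2+b^2)$, which gives $\mu_2\geq d_2$.
\item If $u,v$ are adjacent, the form becomes $d_ua^2+d_vb^2-2ab$. In this case enlarge the test space, e.g.\ by a vector supported on the common non-neighbours. Alternatively, use interlacing with the subgraph consisting of the two stars at $u$ and $v$. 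Either route recovers $\mu_2\geq d_2$ except when $G$ is a star.
\end{itemize}
Consequently, if $d_2\geq\bar d$ we are done.

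\emph{Step 2 (the hard case $d_2<\bar d$).} Here exactly one vertex $u$ has degree above average. So $d_1>\bar d>d_i$ for all $i\geq 2$, and $2m<d_1+(n-1)d_2$. I expect this case to be the main obstacle, since neither $\mu_2\geq d_2$ nor the trace bound
$$\mu_2\geq\frac{2m-\mu_1}{n-2}\geq\frac{2m-n}{n-2}$$
is strong enough by itself.

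To handle it, I would pass to $G-u$ and use Laplacian interlacing under vertex deletion (equivalently, edge-deletion interlacing for the star at $u$). This gives $\mu_2(G)\geq\mu_1(G-u)$. I then bound $\mu_1(G-u)$ from below by the maximum degree of $G-u$ plus one, using the standard bound $\mu_1\geq\Delta+1$ for graphs with at least one edge. The final step is to compare that quantity with $\bar d$ via the degree sum $2m=d_1+\sum_{i\geq2}d_i$ and the inequality $d_1\leq n-1$.

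If this comparison fails when $d_1$ is close to $n-1$, I would fall back on the complement. Since $\mu_2(G)=n-\mu_{n-2}(\overline G)$, it suffices to bound $\mu_{n-2}(\overline G)$ from above. In this regime $u$ is nearly isolated in $\overline G$, which should make that bound accessible. The star $K_{1,n-1}$ is exactly the configuration where everything degenerates: $G-u$ is empty and $\mu_2=1<\bar d$. Identifying it as the only exception is the crux of the argument.

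\emph{Step 3 (equality).} Track equality through Step 1. Equality $\mu_2=d_2=\bar d$ forces $d_1=d_2=\bar d$, so $G$ is regular. It also forces the Courant--Fischer test vectors to be eigenvectors, which pins down $u$ and $v$ as non-adjacent vertices with identical neighbourhoods. Propagating this over all pairs gives a complete bipartite regular graph, i.e.\ $K_{n/2,n/2}$. Conversely, $K_{n/2,n/2}$ has $\mu_2=n/2=2m/n$. In Step 2 the inequalities are strict, so no equality case arises there.
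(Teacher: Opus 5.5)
The paper gives no proof of this lemma; it only quotes it from the literature. Your argument therefore has to stand on its own, and it has a genuine gap in Step~2.

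Step~1 is fine as a citation of the Li--Pan bound $\mu_2\ge d_2$, and splitting on whether $d_2\ge \frac{2m}{n}$ is natural. The proof breaks in Step~2, which is where the whole content of the lemma lies. The inequality $\mu_2(G)\ge\mu_1(G-u)$ does not follow from interlacing. Cauchy interlacing with the principal submatrix $L(G-u)+D_{N(u)}$ of $L(G)$ gives only the upper bound $\mu_2(G)\le\mu_1(G-u)+1$. Deleting the edges at $u$ gives only $\mu_i(G)\ge\mu_i(G-u)$, that is, $\mu_2(G)\ge\mu_2(G-u)$. The inequality is in fact false inside your hard case. Take $G=(K_1\vee C_4)\cup K_{3,3}$ with $u$ the hub: then $d_1=4$ and $d_2=3<\frac{34}{11}=\frac{2m}{n}$, yet $\mu_1(G-u)=\mu_1(K_{3,3})=6>5=\mu_2(G)$. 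Even granting that link, the next one is too weak. For the wheel $K_1\vee C_4$ you have $d_2=3<\frac{16}{5}=\frac{2m}{n}$, while $\Delta(G-u)+1=3$. For $G=K_1\vee H$ the truth is $\mu_2(G)=\mu_1(H)+1$, and that $+1$ is exactly what your chain discards. The fallback to $\overline{G}$ states no inequality, so the hard case is simply not handled.

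What is actually needed there is this. If $d_2<\frac{2m}{n}$, then using $d_1\le n-1$ and $d_w\le d_2$ for $w\ne u$ you get $\frac{2m}{n}\le\frac{(n-1)(d_2+1)}{n}<d_2+1$. So you need a lower bound on $\mu_2$ that beats Li--Pan by almost a full unit, and the proposal supplies none. Any such argument must also use the standing assumption of Section~3 that $G$ has no isolated vertices, which you never invoke. Without it the lemma fails: $K_{1,3}\cup K_1$ has $\mu_2=1<\frac{6}{5}=\frac{2m}{n}$. Likewise, your degeneration ``$G-u$ is edgeless'' occurs for every $K_{1,k}\cup(n-k-1)K_1$, not only for the star.

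Step~3 has a separate gap. From $\mu_2=d_2=\frac{2m}{n}$ you cannot conclude $d_1=d_2$ without further spectral input; the degree sequence $(3,2,2,2,1)$ already has $d_2=\frac{2m}{n}$ with $d_1>d_2$. You need either the equality case of the Li--Pan bound, or your twin argument carried out for the two top-degree vertices. That must include the adjacent case, where Step~1 used a different route. Once regularity is established, your twin argument is sound: any two non-adjacent vertices have equal neighbourhoods, so $G$ is a regular complete multipartite graph, and among these only $K_{n/2,n/2}$ has $\mu_2$ equal to the degree. The strictness you claim for Step~2 remains unsupported until Step~2 itself is proved.
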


   \begin{lemma} \label{newle2} {\rm \cite{MERRIS}} Let $G$ be a graph with $S_L(G)=\{\mu_1,\,\mu_2,\,\ldots,\,\mu_{n-1},\,0\}$.
 Then $S_L(\overline{G})=\{n-\mu_{n-1},\,n-\mu_{n-2},\,\ldots,\,n-\mu_{1},\,0\}$,
 where $\overline{G}$ is the complement of $G$.
 \end{lemma}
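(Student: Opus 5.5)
The plan is to express $L(\overline{G})$ as an explicit matrix function of $L(G)$, and then read off the eigenvalues from a common orthonormal eigenbasis. Write $L(G)=D(G)-A(G)$, where $D(G)$ is the diagonal degree matrix. In the complement, each vertex has degree $d_{\overline{G}}(v)=n-1-d_G(v)$, so $D(\overline{G})=(n-1)I-D(G)$. Likewise $A(\overline{G})=J-I-A(G)$. Subtracting gives the key identity
\begin{equation*}
L(\overline{G})=(n-1)I-D(G)-J+I+A(G)=nI-J-L(G).
\end{equation*}

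Next I would choose the eigenbasis. Every row of $L(G)$ sums to zero, so $L(G)\mathbf{1}=0$, where $\mathbf{1}$ is the all-ones vector. Since $L(G)$ is symmetric, the orthogonal complement $\mathbf{1}^{\perp}$ is invariant under $L(G)$. Hence we can pick an orthonormal eigenbasis $\{\tfrac{1}{\sqrt n}\mathbf{1},x_1,\dots,x_{n-1}\}$ with $x_i\perp\mathbf{1}$ and $L(G)x_i=\mu_i x_i$ for $1\le i\le n-1$. The eigenvalue $0$ attached to $\mathbf{1}$ is precisely the listed $0$ in $S_L(G)=\{\mu_1,\dots,\mu_{n-1},0\}$. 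If $0$ has higher multiplicity, the remaining zero eigenvalues appear among the $\mu_i$; this costs nothing, because we are only splitting one copy of the eigenvalue $0$ off along $\mathbf{1}$.

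Now apply the identity to this basis. Since $Jx_i=\mathbf{1}\mathbf{1}^{T}x_i=0$ for each $x_i\perp\mathbf{1}$, we get
\begin{equation*}
L(\overline{G})x_i=(n-\mu_i)x_i .
\end{equation*}
For the remaining basis vector, $L(\overline{G})\mathbf{1}=n\mathbf{1}-n\mathbf{1}-0=0$. This exhibits a full orthonormal eigenbasis of $L(\overline{G})$, with eigenvalues $n-\mu_1,\dots,n-\mu_{n-1},0$. Listing them in non-increasing order, $n-\mu_{n-1}\ge\cdots\ge n-\mu_1$, gives the stated spectrum.

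Alternatively, one could note that $L(\overline{G})=nI-(L(G)+J)$ and invoke Lemma~\ref{cor3} with $a=1$: the matrix $L(G)+J$ has spectrum $\{\mu_1,\dots,\mu_{n-1},n\}$, and since $nI$ commutes with it, the spectrum of $nI-(L(G)+J)$ is $\{n-\mu_1,\dots,n-\mu_{n-1},0\}$. The only point requiring any care is the basis choice in the second paragraph, namely making sure the eigenvector $\mathbf{1}$ carries exactly one copy of the eigenvalue $0$. Beyond that, the argument is a direct computation and I do not expect a genuine obstacle.
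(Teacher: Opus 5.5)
Your proof is correct. The paper gives no proof of this lemma; it simply quotes it from Merris. Your identity $L(\overline{G})=nI-J-L(G)$, together with a common orthonormal eigenbasis on $\mathbf{1}^{\perp}$, is exactly the standard argument. Your second route via Lemma~\ref{cor3} with $a=1$ is the same computation written through $L+J$, the matrix the paper itself uses later in Theorem~\ref{thm32}.

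One small detail could be added. Putting $0$ last in the non-increasing list requires $n-\mu_1\geq 0$. This holds because $L(\overline{G})$ is positive semidefinite, which also gives you $\mu_1\le n$. As a multiset identity, though, the ordering does not matter.
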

     \begin{lemma} \label{newle3} {\rm \cite{MERRIS}} Let $G$ be a graph of order $n$ and the maximum degree
   $\Delta$. If $G$ has at least one edge, then  $\mu_1\geq \Delta+1$.
 \end{lemma}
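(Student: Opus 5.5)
The statement to be proved is Lemma~\ref{newle3}: if $G$ has at least one edge, then $\mu_1\geq \Delta+1$. I will use the Rayleigh quotient characterization $\mu_1(G)=\max_{x\neq 0} x^{T}L(G)x/x^{T}x$ together with the quadratic form identity
$$x^{T}L(G)x=\sum_{uv\in E(G)}(x_u-x_v)^2 .$$
Let $v$ be a vertex of degree $\Delta\geq 1$, which exists because $G$ has an edge.

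\textbf{Choice of test vector.} Take the vector $x$ with $x_v=\Delta$, $x_w=-1$ for every neighbour $w$ of $v$, and $x_u=0$ otherwise. Then $x^{T}x=\Delta^2+\Delta=\Delta(\Delta+1)$.

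\textbf{Lower bound on the quadratic form.} Each of the $\Delta$ edges $vw$ contributes $(\Delta+1)^2$ to the sum. Every other edge contributes a nonnegative amount. Hence
$$x^{T}L(G)x\geq \Delta(\Delta+1)^2 .$$
It follows that
$$\mu_1\geq \frac{\Delta(\Delta+1)^2}{\Delta(\Delta+1)}=\Delta+1 .$$

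\textbf{Alternative route.} One could instead use interlacing: the star $K_{1,\Delta}$ is a subgraph of $G$, and Laplacian eigenvalues do not decrease when edges or vertices are added (since $L(G)-L(H)$ is positive semidefinite, after padding $H$ with isolated vertices). The star has largest Laplacian eigenvalue $\Delta+1$, which gives the same bound.

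The only point needing care is the sign choice, so that the edges from $v$ to its neighbours each contribute exactly $(\Delta+1)^2$; the rest is direct. No step is a real obstacle.
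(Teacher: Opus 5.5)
Your proof is correct and complete. The paper does not prove this lemma itself; it quotes it from Merris, so there is no in-paper argument to compare against.

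Your Rayleigh-quotient computation is valid as written:
\begin{itemize}
\item $x\neq 0$ because $\Delta\geq 1$;
\item each of the $\Delta$ edges at $v$ contributes $(\Delta+1)^2$ to $\sum_{uv\in E(G)}(x_u-x_v)^2$;
\item all remaining terms are nonnegative.
\end{itemize}

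Your alternative route is also sound. Write $L(G)=L(H)+L(G\setminus E(H))$, where $H$ is the spanning star at $v$ and the second summand is positive semidefinite, and apply Weyl's inequality.

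The two routes are really the same computation. Your test vector $(\Delta,-1,\dots,-1,0,\dots,0)$ is exactly the eigenvector of $L(K_{1,\Delta})$, padded with isolated vertices, for the eigenvalue $\Delta+1$. What differs is what each version buys:
\begin{itemize}
\item The first is more self-contained: it needs neither the star's spectrum nor Weyl's inequality.
\item The second gives more for free: $\mu_i(G)\geq\mu_i(H)$ for every $i$ and every spanning subgraph $H$.
\end{itemize}
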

 The combination of these two results leads directly to the following.

    \begin{lemma} \label{newsundas} Let $G$ be a graph of order $n$ and the minimum degree
   $\delta$. If $G$ is not complete, then  $\mu_{n-1}\leq \delta$.
 \end{lemma}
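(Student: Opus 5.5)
The plan is to obtain Lemma~\ref{newsundas} by applying Lemma~\ref{newle3} to the complement $\overline{G}$ and translating back with Lemma~\ref{newle2}.

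\emph{Step 1: set up the complement.} Since $G$ is not complete, $\overline{G}$ has at least one edge. Let $v$ be a vertex of $G$ with degree $d_G(v)=\delta$. Its degree in $\overline{G}$ is $n-1-\delta$. Hence the maximum degree of $\overline{G}$ satisfies $\Delta(\overline{G})\geq n-1-\delta$, and in fact equality holds.

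\emph{Step 2: bound the Laplacian spectral radius of $\overline{G}$.} Lemma~\ref{newle3} applies to $\overline{G}$ because it has an edge, and gives
\begin{equation*}
\mu_1(\overline{G})\geq \Delta(\overline{G})+1\geq n-\delta .
\end{equation*}

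\emph{Step 3: translate back to $G$.} By Lemma~\ref{newle2}, the Laplacian spectrum of $\overline{G}$ is $\{n-\mu_{n-1},\ldots,n-\mu_1,0\}$, where the $\mu_i$ are the Laplacian eigenvalues of $G$. Its largest element is $n-\mu_{n-1}(G)$, since $\mu_{n-1}(G)\leq n$ makes every entry $n-\mu_i\geq 0$. Therefore
\begin{equation*}
n-\mu_{n-1}(G)\geq n-\delta, \qquad\text{that is,}\qquad \mu_{n-1}(G)\leq\delta .
\end{equation*}

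The only point needing care is the identification $\mu_1(\overline{G})=n-\mu_{n-1}(G)$. For this I will use the standard fact that the Laplacian eigenvalues of a graph on $n$ vertices lie in $[0,n]$. That fact also follows from Lemma~\ref{newle2} itself, because the complement's eigenvalues are nonnegative. Everything else is a direct substitution, and the hypothesis that $G$ is not complete is used exactly once, to make Lemma~\ref{newle3} applicable to $\overline{G}$.
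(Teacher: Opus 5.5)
Your proposal is correct and follows the paper's own route: the paper obtains this lemma directly by applying Lemma~\ref{newle3} to $\overline{G}$, which has an edge because $G$ is not complete, and then translating back through the complement spectrum of Lemma~\ref{newle2}, exactly as you do. Your identification $\mu_1(\overline{G})=n-\mu_{n-1}(G)$ is sound. It also follows without the $[0,n]$ fact: since $\mu_1(\overline{G})\geq n-\delta\geq 1>0$, the maximum of $S_L(\overline{G})$ cannot be the entry $0$, so it must be $n-\mu_{n-1}(G)$.
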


  \begin{lemma} \label{cor4}  {\rm \cite{B}}
    Let $G$ be a graph with order $n$. Then the rank of $L(G)$ equals $n-k$, where $k$ is the number of connected components of $G$.
  \end{lemma}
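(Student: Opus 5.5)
We have to prove Lemma \ref{cor4}: the rank of $L(G)$ equals $n-k$, where $k$ is the number of connected components of $G$. By rank--nullity it suffices to show that $\dim\ker L(G)=k$. The plan is to identify the kernel through the quadratic form of $L(G)$.

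First, write $L(G)=D(G)-A(G)$ and verify directly that for every $x\in\mathbb{R}^n$,
\[
x^{T}L(G)x=\sum_{uv\in E(G)}(x_u-x_v)^2 .
\]
This is a routine expansion: each edge $uv$ contributes $x_u^2+x_v^2$ to the diagonal part and $-2x_ux_v$ to the off-diagonal part. Consequently $L(G)$ is positive semidefinite.

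Second, use that for a symmetric positive semidefinite matrix, $Lx=0$ if and only if $x^{T}Lx=0$. The forward direction is trivial. For the converse, write $L=B^{T}B$ (for instance with $B$ the oriented incidence matrix, or via the spectral decomposition); then $x^{T}Lx=\|Bx\|^2=0$ forces $Bx=0$ and hence $Lx=0$.

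Third, combine the two facts. A vector $x$ lies in $\ker L(G)$ exactly when $x_u=x_v$ for every edge $uv$. Following paths, this holds exactly when $x$ is constant on each connected component. Let $C_1,\dots,C_k$ be the components. The space of such vectors is spanned by the indicator vectors $\mathbf{1}_{C_1},\dots,\mathbf{1}_{C_k}$. These have pairwise disjoint nonempty supports, so they are linearly independent. Hence $\dim\ker L(G)=k$ and $\operatorname{rank}L(G)=n-k$.

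Alternatively, one can order the vertices by component, so that $L(G)$ is block diagonal with blocks $L(C_i)$, and show that each connected block has nullity $1$. This leads to the same argument.

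The only step needing care is the equivalence in the second step between $x^{T}Lx=0$ and $Lx=0$. The factorization $L=B^{T}B$ settles it cleanly, so no real obstacle is expected.
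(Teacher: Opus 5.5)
Your proof is correct and complete. The paper does not prove this lemma; it quotes it from the literature. The standard textbook argument is essentially yours: write $L(G)=B^{T}B$ with $B$ an oriented incidence matrix, so that $\ker L(G)=\ker B$ is exactly the space of vectors constant on each connected component, which has dimension $k$.
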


  \begin{lemma} \label{2le1} {\rm \cite{CAE}} Let $G$ be a graph of order $n$ and size $m$.
 Then $\sum\limits_{i=1}^n d^2_i\leq \frac{2\,m^2}{n-1}+(n-2)\,m$.
 \end{lemma}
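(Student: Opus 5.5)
The plan is to write $Q=\sum_{i=1}^n d_i^2$ as a double count, $Q=\sum_{i}S_i$ with $S_i=\sum_{j\sim i}d_j$ (equivalently $Q=\sum_{uv\in E}(d_u+d_v)$), and to bound the $S_i$ by counting edges around $N(i)$. Two test cases should guide the choice of inequalities, since the target $\frac{2m^2}{n-1}+(n-2)m$ is attained by both. For the star $K_{1,n-1}$ we have $Q=n(n-1)$. For $K_{n-1}$ plus an isolated vertex we have $Q=(n-1)(n-2)^2$. Whatever inequalities I use must be tight on both graphs. A purely per-edge bound $d_u+d_v\le \frac{2m}{n-1}+n-2$ is false: two adjacent vertices joined to everything, with no other edges, already break it. So the argument has to be genuinely global, mixing the local counts with a global identity.

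The local step is as follows. Fix $i$ and let $T_i=\sum_{j\notin N(i)\cup\{i\}}d_j$. Each edge not incident to $i$ has $c\in\{0,1,2\}$ endpoints in $N(i)$ and $2-c$ endpoints outside $N(i)\cup\{i\}$. This gives the exact identity $S_i-d_i+T_i=2(m-d_i)$. It also gives the inequality $S_i\le m+e(N(i))$, because an edge counted twice in $S_i$ lies inside $N(i)$. The global identity is $\sum_i T_i=\sum_j d_j(n-1-d_j)=2m(n-1)-Q$. Summing the crude bounds alone only yields $Q\le 2m(n-1)$, which is too weak exactly by the factor distinguishing $\frac{2m}{n-1}$ from $n$. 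So the key extra ingredient must be a quadratic (Cauchy--Schwarz) input. I would use $\big(\sum_i d_i\big)^2=4m^2$ together with the complementary degrees $\bar d_i=n-1-d_i$. Concretely, I would estimate the weighted sum $\sum_i (n-1-d_i)S_i$, which can be rewritten via the identity above in terms of $Q$, $m$ and $\sum_i d_i T_i$. I would then bound the cross term using $T_i\le (n-1-d_i)\max_j d_j$, or better $T_i\le 2(m-d_i)-(S_i-d_i)$, and close with $Q\ge 4m^2/n$.

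The strategy is to find nonnegative weights $\alpha,\beta$ (possibly depending on $n$ and $m$) such that
$$\alpha\sum_i\big(S_i-2m+d_i+T_i\big)+\beta\sum_i\big(\text{a Cauchy--Schwarz slack}\big)\ge 0$$
rearranges exactly to $(n-1)Q\le 2m^2+(n-1)(n-2)m$. The weights will be chosen so that every slack vanishes on both extremal graphs. Equivalently, I expect the final inequality to factor as $(n-1)\big[(n-1)(n-2)m+2m^2-(n-1)Q\big]=\sum_i(\text{nonnegative term in }d_i,S_i)$. A natural candidate for the term is $\sum_{i}\bar d_i\,(d_i(n-1)-S_i)$-type products, each of which is nonnegative since $S_i\le d_i(n-1)$ and $\bar d_i\ge 0$.

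The main obstacle is exactly this last step: identifying the right nonnegative combination, i.e.\ the correct weighted sum of $S_i\le d_i(n-1)$ and $S_i\le 2m-d_i-T_i$, that yields the constant $\frac{2}{n-1}$ in front of $m^2$ rather than something weaker. I would first try expanding $\sum_i (n-1-d_i)\big((n-1)d_i-S_i\big)\ge 0$ and $\sum_i d_i\big(2m-d_i-S_i\big)\ge 0$, and see whether some linear combination of the two, after substituting $\sum_i S_i=Q$ and $\sum_i d_iS_i=\sum_{uv\in E}2d_ud_v$, produces the claimed bound. The quantity $\sum_i d_iS_i$ cannot be expressed in terms of $Q$ and $m$ alone, so the combination will need to be chosen so that this term cancels. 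If that fails, the fallback is de Caen's original linear-algebraic route, which applies the positive semidefiniteness of a Gram matrix built from the vertex--edge incidence vectors, centred by their mean.
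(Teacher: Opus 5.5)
\textbf{There is a genuine gap, and the one concrete candidate you name cannot close it.} The paper does not prove this lemma; it imports de Caen's inequality from \cite{CAE}. So your proposal has to stand alone, and it leaves the decisive step open.

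Write $Q=\sum_i d_i^2$ and $W=\sum_i d_iS_i$. Your two candidate inequalities expand to $2(n-1)Q\le 2(n-1)^2m+W$ and $W\le 4m^2-Q$. Up to scaling, the only nonnegative combination that eliminates $W$ is their sum, $(2n-1)Q\le 2(n-1)^2m+4m^2$. This exceeds the target $\frac{2m^2}{n-1}+(n-2)m$ by exactly $\frac{m(n(n-1)-2m)}{(2n-1)(n-1)}$, which is positive whenever $0<m<\binom{n}{2}$.

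This failure is structural, not a matter of choosing weights. The target is attained by both $K_{1,n-1}$ and $K_{n-1}\cup K_1$, so every slack used with positive weight must vanish at every vertex of both graphs.
\begin{itemize}
\item The slack $(n-1)d_i-S_i$ is positive at the centre of the star and at the clique vertices of $K_{n-1}\cup K_1$ (degrees $n-1$ and $n-2$).
\item The slack $2m-d_i-S_i$ is positive at the leaves and at the isolated vertex (degrees $1$ and $0$).
\end{itemize}
You need weights affine in $d_i$ to keep $\sum_i w_iS_i$ expressible through $Q$ and $W$. Such weights must vanish at two distinct degrees for each slack, hence vanish identically. So local data $(d_i,S_i)$ with these bounds cannot produce the constant $\frac{2}{n-1}$. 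Naming de Caen's linear-algebraic argument as a fallback, without carrying it out, is not a proof.

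\textbf{An argument that does work.} Put $F_n(G)=2m^2+(n-1)(n-2)m-(n-1)Q$. This quantity is invariant under complementation, which explains why the extremal graphs come as a complementary pair. Use the identities
\begin{itemize}
\item $\sum_w(m-d_w)=(n-2)m$,
\item $\sum_w(m-d_w)^2=(n-4)m^2+Q$,
\item $\sum_w\sum_{v\ne w}(d_v-a_{vw})^2=(n-3)Q+2m$.
\end{itemize}
Together they give
\[
(n-4)\,F_n(G)=\sum_{w\in V(G)}F_{n-1}(G-w)\qquad(n\ge 5).
\]
Induction therefore reduces the lemma to $n=4$. There $F_4=2m^2+6m-3Q$ takes the values $0,2,8,2,0,0,6,8,2,2,0$ on the eleven graphs of order $4$: empty, $K_2\cup 2K_1$, $2K_2$, $P_3\cup K_1$, $K_3\cup K_1$, $K_{1,3}$, $P_4$, $C_4$, the paw, $K_4-e$ and $K_4$. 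For $2\le n\le 3$ one checks directly that $F_n\equiv 0$.

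Equivalently, $F_n(G)=2\sum_{|S|=4}\phi(G[S])$. Here $\phi(H)$ is twice the number of pairs of disjoint edges of $H$, minus the number of paths of length two in $H$, plus the number of edges of $H$.
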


\subsection{Adjacency matrix}
In this subsection, we will apply the results obtained in last
section to adjacency matrix of a graph.

\begin{lemma} \label{le4}
  Let $G$ be a complete multipartite graph. Then G has three distinct eigenvalues if and only if  $G$ is complete bipartite or $G\cong K_{\underbrace{t,t,\dots,t}_p}$ with $p\geq 3$ and $t\geq 2$.
\end{lemma}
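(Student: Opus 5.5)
The plan is to compute the adjacency spectrum of a general complete multipartite graph $G=K_{n_1,\dots,n_p}$ ($p\ge 2$, $n_1\ge\cdots\ge n_p\ge 1$) explicitly enough to count its distinct eigenvalues, and then read off when that count equals three. Write $A(G)=J-\mathrm{diag}(J_{n_1},\dots,J_{n_p})$. For any vector $x$ supported on a single part $V_i$ with coordinate sum zero, we have $A x = 0$. These vectors span a space of dimension $n-p$, so $0$ is an eigenvalue of multiplicity at least $n-p$, and it actually occurs whenever some $n_i\ge 2$. The remaining $p$ eigenvalues come from the quotient matrix $B$ of the equitable partition, with $B_{ij}=n_j$ for $i\neq j$ and $B_{ii}=0$. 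We know that $\nu=1$ by Lemma \ref{le2} (alternatively, from the structure of $B$), so exactly one eigenvalue is positive, namely $\lambda_1$, and the other $p-1$ eigenvalues of $B$ are negative.

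For the negative eigenvalues, an eigenvector $y$ of $A$ that is constant on parts, with value $y_i$ on $V_i$, satisfies $\lambda y_i = S - n_i y_i$, where $S=\sum_j n_j y_j$. Hence $y_i = S/(\lambda+n_i)$, and the eigenvalues of $B$ are the roots of the secular equation $\sum_i n_i/(\lambda+n_i)=1$, together with $\lambda=-n_i$ whenever the value $n_i$ is repeated. More precisely, if the distinct part sizes are $a_1>\dots>a_r$ with multiplicities $m_1,\dots,m_r$, then $-a_j$ is an eigenvalue of multiplicity $m_j-1$ (take $S=0$ and differences supported on equal-size parts). The secular function $\sum_j m_j a_j/(\lambda+a_j)-1$ has exactly one root in each interval $(-a_{j},-a_{j+1})$, for $j=1,\dots,r-1$, plus one positive root. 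Consequently the distinct nonzero eigenvalues of $G$ are: the positive root $\lambda_1$, the $r-1$ interlaced roots lying strictly between consecutive values $-a_j$, and each $-a_j$ with $m_j\ge 2$. Together with $0$ when $n>p$, this gives a complete count of the distinct eigenvalues.

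Now count. If $r\ge 2$, there is at least one interlaced negative root. The distinct eigenvalues then include $\lambda_1$, that root, and further eigenvalues which must be excluded case by case. If $p=2$ (the bipartite case), the spectrum is $\pm\sqrt{n_1n_2}$ and $0$; since $n\ge 3$ in that setting, $0$ occurs, giving exactly three distinct eigenvalues, and $K_{1,1}$ with only two is degenerate. If $p\ge 3$ and $r\ge 2$, there are $p-1\ge 2$ negative eigenvalues. If these are all equal, $B$ has a single negative eigenvalue, which forces all $n_i$ equal and contradicts $r\ge 2$. So there are at least two distinct negative eigenvalues, and $0$ as well unless all $n_i=1$; but all $n_i=1$ forces $r=1$. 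That makes at least four distinct eigenvalues. If $r=1$, so $G=K_{t,\dots,t}$, the spectrum is $\{t(p-1),\,0^{(p(t-1))},\,(-t)^{(p-1)}\}$. This has three distinct eigenvalues exactly when $t\ge 2$; for $t=1$ it is $K_p$ with two.

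The main obstacle is the secular-equation step: proving rigorously that distinct part sizes produce distinct negative eigenvalues. To handle it, I would rely on the strict monotonicity of $\sum_j m_ja_j/(\lambda+a_j)$ on each interval between poles. I would also handle the bipartite case with unequal parts separately, since there $r=2$ yet only one negative eigenvalue exists; this is consistent with the claim, because complete bipartite graphs are included in the statement.
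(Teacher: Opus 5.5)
Your proposal is correct and follows essentially the paper's route. The paper cites the factorization $\Phi(G,x)=x^{n-p}\prod_i(x+m_i)^{a_i-1}\mathcal{F}(x)$ and finds a root of $\mathcal{F}$ between consecutive $-m_i$ by sign changes. That is exactly your secular-equation count, which you derive from eigenvectors instead of citing. The paper then makes the same case split on the number of distinct part sizes.

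Your caveat that $K_{1,1}$ has only two distinct eigenvalues is a genuine exception to the statement as written. The paper's necessity argument overlooks it.
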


\begin{proof} We first prove the sufficiency. Let $G$ be a complete multipartite graph with three distinct eigenvalues. If the negative inertia $\theta$ of $G$ is $1$, then the rank of $G$ is $2$ as $\lambda_1(G)=\max_{i}{|\lambda_i(G)|}$ and $\sum\limits^n_{i=1}\,\lambda_i(G)=0$. It follows that $G$ is complete bipartite.
 Otherwise, $\theta(G)\geq 2$. Without loss of generality, we
 assume that $G\cong K_{n_1, n_2,\dots,n_p}$, where $n_1=\dots=n_{a_1}>n_{a_1+1}=\dots=n_{a_1+a_2}>\dots>n_{a_1+\dots+a_{s-1}+1}=
 \dots=n_{a_1+\dots+a_s}$, $\sum\limits_{i=1}^p n_i=n$ and $\sum\limits_{i=1}^s a_i=p$ with $3\leq p\leq n-1$.
 The characteristic polynomial of a complete multipartite graph is
given in \cite{A}, that is,
 \begin{eqnarray*}
 \Phi(G, x)=x^{n-p}\left(\prod_{j=1}^{p}(x+n_j)-\sum_{i=1}^{p}\,n_i\left(\prod_{j=1,j\neq
 i}^{p}(x+n_j)\right)\right).
 \end{eqnarray*}
 For convenience, let $m_i= n_{\sum_{j=1}^ia_j}$.
 From the above result, we have
 \begin{eqnarray}
 \Phi(G,x)=x^{n-p}\prod_{i=1}^{s}(x+m_i)^{a_i-1}\mathcal{F}(x)\label{sunm1},
 \end{eqnarray}
 where $\mathcal{F}(x)=\prod_{j=1}^{s}
 (x+m_j)-\sum_{i=1}^{s}a_i m_i\prod_{j=1,j\neq i}^{s}(x+m_j)$.
 For $s=1$, the required result follows immediately.
 For $s=2$, then $a_1a_2\geq 2$ as $p\geq 3$. It follows that $-m_1$ or $-m_2$ (or possible both) must be an eigenvalue of $G$. Now we have
 $\mathcal{F}(-m_1)=a_1m_1(m_1-m_2)>0$ and
 $\mathcal{F}(-m_2)=a_2m_2(m_2-m_1)<0$. This implies that $G$ has
 an eigenvalue between $-m_2$ and $-m_1$. In addition, $G$ has at
 least a positive eigenvalue and an eigenvalue zero. Combining these
 results, $G$ has at least four distinct eigenvalues, a contradiction.
 For $s\geq 3$, we have
 $$\mathcal{F}(-m_i)=-a_im_i\sum\limits_{j=1,j\neq i}^s(m_j-m_i),$$
 which implies that $\mathcal{F}(-m_i)\mathcal{F}(-m_{i+1})<0$ for $1\leq i\leq s-1$.
 Then $G$ has at least one eigenvalue between $-m_i$ and $-m_{i+1}$ for
 $1\leq i\leq 2$. In addition, $G$ has at
 least a positive eigenvalue and an eigenvalue zero. This
 contradicts to the condition that $G$ has three distinct
 eigenvalue.\\
 \noindent
 Next we prove the necessity. By (\ref{sunm1}), we have
 \begin{equation}
 S_A(K_{\underbrace{t,t,\dots,t}_p})=\Big\{t(p-1),\,\underbrace{0,\,\ldots,\,0}_{tp-p},\,\underbrace{-t,\,\ldots,
 -t}_{p-1}\Big\}.\label{sunm2}
 \end{equation}
 From this, one can see that $K_{\underbrace{t,t,\dots,t}_p}$ has
 three distinct eigenvalues if $p\geq 3$ and $t\geq 2$. Moveover, it is well known that a complete bipartite graph has three distinct eigenvalues. This completes the proof.
\end{proof}

 By taking $t=1$ in Theorem \ref{th3} with Lemma \ref{le4}, we get the following result:
 \begin{theorem} \label{coro1}
  Let $G$ be a graph of order $n\geq 2$ with negative inertia $\theta$. Then for any integer $k$,
 \begin{equation*}
 \sum_{i=1}^{k}\lambda_i(G)\leq \sqrt{2\,m\left(k-\frac{1}{\theta+1}\right)}
 \end{equation*}
 with equality holding if and only if  $G\cong K_{p,\,q}$ with $n=p+q$ and $k=1$ or $G\cong K_{\underbrace{t,\,t,\ldots,\,t}_p}$ $(n=p\,t,\,p\geq 3)$ with $k=1$.
 \end{theorem}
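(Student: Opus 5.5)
We apply Theorem \ref{th3} to $M=A(G)$ with $t=1$. Here $tr(A)=0$, so $\frac{tr(M)}{n}=0$ and $tr(A^2)=\sum_i d_i=2m$. The hypothesis $t\le n-\theta$ holds because $G$ has at least one edge (no isolated vertices, $n\ge 2$), so $\lambda_1(G)>0$ and $\theta\le n-1$. For every integer $k\ge 1$, Theorem \ref{th3} gives
$$\sum_{i=1}^k\lambda_i(G)\le \sqrt{2m\left(k-\frac{1}{\theta+1}\right)}.$$
The stated inequality is therefore immediate, and all the work lies in the equality characterization.

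\textbf{Necessity.} Since $A(G)\neq 0$, Theorem \ref{th3} forces $k=t=1$, together with
$$\lambda_1>0\ge\lambda_2 \quad\text{and}\quad \lambda_{n-\theta+1}=\cdots=\lambda_n.$$
From $\lambda_1>0\ge \lambda_2$ the positive inertia is $\nu=1$, so Lemma \ref{le2} shows that $G$ is complete multipartite. The spectrum is then $\{\lambda_1,0^{(n-1-\theta)},\beta^{(\theta)}\}$ with $\beta<0$, so $G$ has at most three distinct eigenvalues. If $G$ has only two distinct eigenvalues, then (being connected as a complete multipartite graph) $G=K_n$, which is $K_{1,\dots,1}$. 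We must decide whether this case is excluded: the statement lists $K_{t,\dots,t}$ with $p\ge3$, and Lemma \ref{le4} needs $t\ge2$ for three eigenvalues. Checking directly, $K_n$ has $\theta=n-1$ and $\lambda_1=n-1$, while the bound gives $\sqrt{n(n-1)(1-1/n)}=n-1$, so equality holds. I would therefore allow $t=1$ in the family $K_{t,\dots,t}$, which is consistent with the statement (it does not demand $t\ge 2$) when $p\ge3$. The case $n=2$, $K_2=K_{1,1}$, is covered by the bipartite family. If $G$ has exactly three distinct eigenvalues, Lemma \ref{le4} gives $G\cong K_{p,q}$ or $G\cong K_{t,\dots,t}$ with $p\ge3$.

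\textbf{Sufficiency.} We verify equality directly for $k=1$. For $K_{p,q}$ we have $\theta=1$ and $\lambda_1=\sqrt{pq}=\sqrt{m}=\sqrt{2m(1-\tfrac12)}$. For $K_{t,\dots,t}$, using (\ref{sunm2}), $\theta=p-1$ and $m=\frac{t^2p(p-1)}{2}$, so
$$\sqrt{2m\left(1-\frac1p\right)}=\sqrt{t^2(p-1)^2}=t(p-1)=\lambda_1.$$

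The main obstacle is the careful bookkeeping of the equality case: reading off $\nu=1$ from Theorem \ref{th3}, and then treating the two-distinct-eigenvalue subcase ($K_n$) separately, since Lemma \ref{le4} only covers graphs with exactly three distinct eigenvalues.
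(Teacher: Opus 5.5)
Your proposal is correct and follows essentially the same route as the paper. You apply Theorem \ref{th3} with $t=1$ to $A(G)$, read off $k=1$, $\nu=1$ and equal negative eigenvalues from the equality case, and then invoke Lemmas \ref{le2} and \ref{le4}, using the fact that the two-distinct-eigenvalue case is $K_n$. Your explicit checks that $K_n$ (the case $t=1$) and $K_2=K_{1,1}$ attain equality, together with your direct verification of sufficiency, make precise what the paper compresses into ``only complete graphs have two distinct eigenvalues'' and the ``if and only if'' of Theorem \ref{th3}.
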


 \begin{proof}
 By setting $t=1$ and $M=A(G)$ in Theorem \ref{th3} with the facts that $tr(A)=0$ and $tr(A^2)=2m$, we get
 $$\sum_{i=1}^{k}\lambda_i(G)\leq\sqrt{2m\left(k-\frac{1}{\theta+1}\right)}$$
 with equality holding if and only if $k=1$ with $\lambda_1(G)>0\geq\lambda_2(G)$, and
 $\lambda_{n-\theta+1}(G)=\lambda_{n-\theta+2}(G)=\cdots=\lambda_n(G)$, that is, $G$ has two or three distinct eigenvalues with the positive inertia 1.
 By Lemmas \ref{le2} and \ref{le4} with the fact that only complete
 graphs have two distinct eigenvalues, the required result follows
 immediately.
  \end{proof}

 By setting $t=2$ in Theorem \ref{th3}, we get a stronger result for $k\geq 2$.
 \begin{theorem} \label{coro2} Let $G$ be a graph of order $n\geq 2$ with negative inertia $\theta$. If $G\ncong K_n$, then for every integer $k\geq 2$,
 \begin{equation*}
 \sum_{i=1}^{k}\lambda_i(G)\leq \sqrt{2\,m\left(k-\frac{4}{\theta+2}\right)}
 \end{equation*}
 with equality holding if and only if $G\cong \,K_{p,\,q}\cup K_{s,t}$ with $p+q+s+t=n, pq=st$ and $k=2$ or $G\cong 2\,K_{\underbrace{t,\,t,\ldots,\,t}_p}$ $(n=2\,p\,t,\,p\geq 3)$ with $k=2$.
 \end{theorem}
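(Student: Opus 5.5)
The plan is to apply Theorem \ref{th3} with $M=A(G)$ and $t=2$, using $tr(A)=0$ and $tr(A^2)=2m$. Two things need checking. First, the hypothesis $t\le n-\theta$ must hold. Second, the equality conditions must be translated into graph terms.

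For the hypothesis, we need $\theta\le n-2$. Since $G\ncong K_n$ and $G$ has no isolated vertices, $G$ has at least one edge and so $\lambda_1(G)>0$. If $\theta=n-1$, then every eigenvalue other than $\lambda_1$ would be negative, so $\lambda_2<0$. It is classical that $\lambda_2(G)<0$ forces $G$ to be complete; this follows from the induced subgraph $K_1\cup K_2$ or $P_3$ and interlacing, since a non-complete graph with an edge contains one of these. Hence $\theta\le n-2$, Theorem \ref{th3} applies, and for every $k\ge 2$ it gives
\[
\sum_{i=1}^k\lambda_i(G)\le \sqrt{2m\Big(k-\tfrac{4}{\theta+2}\Big)}.
\]
The alternative equality case $M=\frac{tr(M)}{n}I$ in Theorem \ref{th3} would mean $A=0$, which is excluded.

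For equality, Theorem \ref{th3} requires $k=2$, $\lambda_1=\lambda_2>0\ge\lambda_3$, and all negative eigenvalues equal to a common value $-\beta$. So the spectrum is $\{\alpha^{(2)},0^{(n-2-\theta)},(-\beta)^{(\theta)}\}$ with $2\alpha=\theta\beta$.

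The key step is to show that $G$ is disconnected with exactly two components, each having positive inertia $1$. If $G$ were connected, Perron--Frobenius would make $\lambda_1$ simple, contradicting $\lambda_1=\lambda_2>0$. So $G$ is disconnected. Every component has an edge (no isolated vertices) and therefore contributes at least one positive eigenvalue. Since $\nu=2$, there are exactly two components $G_1,G_2$, each with $\nu(G_i)=1$, and each has spectral radius $\alpha$ because the two positive eigenvalues are both $\alpha$. By Lemma \ref{le2}, each $G_i$ is complete multipartite. Each $G_i$ also has at most three distinct eigenvalues, drawn from $\alpha,0,-\beta$. Then Lemma \ref{le4} leaves three possibilities for each component: a complete graph, a complete bipartite graph $K_{p,q}$, or $K_{t,\dots,t}$ with $p\ge3$ parts.

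Next comes matching the components. Two bipartite components need $\sqrt{pq}=\sqrt{st}$ and $\beta=\alpha$, which gives $K_{p,q}\cup K_{s,t}$ with $pq=st$. A component $K_{t,\dots,t}$ on $p$ parts has $\alpha=t(p-1)$ and $\beta=t$, so $\alpha/\beta=p-1\ge2$. A bipartite component has $\alpha/\beta=1$, so the two types cannot be mixed. Two components of the form $K_{t,\dots,t}$ need matching $t$ and $p$, giving $2K_{t,\dots,t}$.

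Complete components need the most care, and this is the main obstacle. The graph $K_r$ has spectrum $r-1$ and $-1$. Pairing it with $K_{1,q}$ forces $\beta=1$ and $\sqrt q=1$, so $G=2K_2=K_{1,1}\cup K_{1,1}$, which is already covered. Two equal cliques $2K_r$ with $r\ge3$ would satisfy the spectral conditions. I would check directly whether this case gives equality. For $2K_r$ we have $\theta=2r-2$ and $2m=2r(r-1)$, so the bound for $k=2$ is $\sqrt{2r(r-1)\cdot\frac{2r-4}{2r}}$. This is strictly less than $2(r-1)$, so the inequality itself fails, which contradicts the theorem.

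Hence $2K_r$ must be ruled out by the hypothesis $t\le n-\theta$. Here $n-\theta=2$, so the hypothesis holds, and equality appears to follow anyway. I therefore suspect the intended reading treats $K_{t,\dots,t}$ with $t=1$, i.e.\ the cliques, as part of the family. The fix is to recheck this case carefully, and if necessary add $2K_r$ to the equality list or restrict the statement.

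The converse direction is a direct verification. Using (\ref{sunm2}) and the known spectrum of $K_{p,q}$, each listed graph satisfies the equality conditions of Theorem \ref{th3}.
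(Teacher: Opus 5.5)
Your approach is the paper's: Theorem \ref{th3} with $t=2$ and $M=A(G)$, then Perron--Frobenius to get exactly two components each of positive inertia $1$, then Lemmas \ref{le2} and \ref{le4} and matching of spectra. Your justification of $\theta\le n-2$ (an induced $P_3$ or $K_1\cup K_2$ plus interlacing) is a useful detail that the paper just calls clear. The genuine problem is the last step. Your computation for $2K_r$ is wrong, and it leads you to the false conclusion that the inequality fails and that the statement may need repair. For $G=2K_r$ you have $\theta=2r-2$ and $2m=2r(r-1)$, so with $k=2$,
\[
k-\frac{4}{\theta+2}=2-\frac{4}{2r}=\frac{2(r-1)}{r},\qquad \sqrt{2r(r-1)\cdot\frac{2(r-1)}{r}}=2(r-1)=\lambda_1+\lambda_2 .
\]
So equality holds. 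Your factor $\frac{2r-4}{2r}$ equals $1-\frac{4}{\theta+2}$, which means you substituted $k=1$. A strict violation was impossible anyway, since you had just derived the inequality from Theorem \ref{th3}. Your two incompatible conclusions, ``the inequality fails'' and ``equality appears to follow anyway'', should have flagged the slip. As written, the equality characterization is left unresolved.

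Once the arithmetic is corrected, nothing in the statement needs fixing. Note that $2K_r=2K_{1,1,\ldots,1}$ with $r$ parts of size $t=1$, and the second family in the statement imposes only $p\ge 3$, not $t\ge 2$. The restriction $t\ge 2$ appears in Lemma \ref{le4} only because $K_p$ has two, not three, distinct eigenvalues. So $2K_r$ with $r\ge 3$ is the case $t=1$, $p=r$ of the second family, and $2K_2=K_{1,1}\cup K_{1,1}$ lies in the first family. This is how the paper handles complete components: it treats them as complete multipartite graphs with all parts equal.

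Two small omissions in your matching step should also be filled. Pairing $K_r$ with an arbitrary $K_{p,q}$, not only $K_{1,q}$, forces $pq=1$. Pairing $K_r$ with $K_{t,\ldots,t}$, $t\ge 2$, is impossible because the negative eigenvalues $-1$ and $-t$ differ. With these points your characterization is complete and coincides with the paper's.
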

 
 \begin{proof}
 If $\theta=n-1$, it is clear that $G\cong K_n$, which is not in our consideration. Otherwise, $\theta\leq n-2$.
 Then we set $t=2$ and $M=A(G)$ in Theorem \ref{th3}, we get
    $$\sum_{i=1}^{k}\lambda_i(G)\leq\sqrt{2m\,\Big(k-\frac{4}{\theta+2}\Big)}~~\mbox{for }k\geq 2.$$
 The equality holds if and only if $k=2$ with $\lambda_1(G)=\lambda_2(G)>0\geq\lambda_3(G)$ and $\lambda_{n-\theta+1}(G)=\lambda_{n-\theta+2}(G)=\cdots=\lambda_n(G)$.
 Now it remains to characterize the graphs with $\lambda_1(G)=\lambda_2(G)>0\geq\lambda_3(G)$ and $\lambda_{n-\theta+1}(G)=\lambda_{n-\theta+2}(G)=\cdots=\lambda_n(G)$.
 By Perron-Frobenius theory, then $\lambda_1(G)$ is an eigenvalue of $G$ with multiplicity $1$ if $G$ is connected.
 As $\lambda_1(G)=\lambda_2(G)>0\geq\lambda_3(G)$, we conclude that $G$ has two connected
 components, say $G_1$ and $G_2$. Then both $G_1$ and $G_2$ have two
 or three distinct eigenvalues with positive inertia $1$. Then by Lemmas \ref{le2} and
 \ref{le4},  $G_1$ and $G_2$ are complete bipartite graphs or complete multipartite graphs with all equal partitions.
  As $S_A(K_{p,\,q})=\Big\{\sqrt{pq},\,0,\,\ldots,\,0,\,-\sqrt{pq}\Big\}$
  with the result given in (\ref{sunm2}), we get the required result after
  simple calculation. This completes the proof.
  \end{proof}
  As the maximum size of any planar graph (triangle-free graph) of order
  $n$ is $3\,n-6$ ($\frac{n^2}{4}$), by setting $k=2$ in Theorem \ref{coro2} with the fact that
  a non-complete graph of order $n$ has the negative inertia at most $n-2$,
  we obtain the following result immediately.

  \begin{corollary} \label{coro3}
  $(i)$ Let $G$ be a planar graph of order $n\geq 3$. Then  
   $$\lambda_1(G)+\lambda_2(G)<\frac{2\,\sqrt{3}}{\sqrt{n}}(n-2)<n.$$
  $(ii)$ Let $G$ be a triangle-free graph of order $n\geq 3$. Then  
    $$\lambda_1(G)+\lambda_2(G)< \sqrt{n\,(n-2)}<n.$$
  \end{corollary}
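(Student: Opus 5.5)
We plan to obtain both parts by applying Theorem \ref{coro2} with $k=2$. First we dispose of the complete graph. If $G\cong K_n$, then $\lambda_1+\lambda_2=(n-1)+(-1)=n-2$. For $n\ge 3$, $K_n$ is triangle-free only when it is not $K_3$ or larger, so for (ii) this case never occurs. For (i), $K_n$ is planar only for $n\le 4$. We will check $n=3,4$ directly: $n-2<\frac{2\sqrt3}{\sqrt n}(n-2)$ holds since $2\sqrt3>\sqrt n$ for $n\le 4$ (in fact for $n<12$).

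Now let $G\ncong K_n$. Then, as noted before the corollary, $\theta\le n-2$. Theorem \ref{coro2} with $k=2$ gives
$$\lambda_1+\lambda_2\le\sqrt{2m\Big(2-\frac{4}{\theta+2}\Big)}=\sqrt{\frac{4m\theta}{\theta+2}}\le\sqrt{\frac{4m(n-2)}{n}},$$
since $\theta/(\theta+2)$ is increasing in $\theta$. We then insert the extremal size bounds:
\begin{itemize}
\item For planar graphs, $m\le 3n-6=3(n-2)$ gives $\sqrt{12(n-2)^2/n}=\frac{2\sqrt3}{\sqrt n}(n-2)$.
\item For triangle-free graphs, $m\le n^2/4$ gives $\sqrt{n(n-2)}$.
\end{itemize}
The final inequalities follow immediately: $\sqrt{n(n-2)}<n$ always, and $\frac{2\sqrt3}{\sqrt n}(n-2)<n$ is equivalent to $12(n-2)^2<n^3$, which we will check for all $n\ge3$ by elementary calculus or small cases.

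The main obstacle is strictness, since the chain above is only non-strict. Equality throughout would force three things at once: equality in Theorem \ref{coro2}, $\theta=n-2$, and $m$ attaining the extremal bound. By Theorem \ref{coro2}, equality requires $G$ to be $K_{p,q}\cup K_{s,t}$ or $2K_{t,\dots,t}$.
\begin{itemize}
\item A union $K_{p,q}\cup K_{s,t}$ has $\theta=2\ne n-2$ unless $n=4$; the case $n=4$ is $2K_2$, whose size is not extremal.
\item For $2K_{t,\dots,t}$ we have $\theta=2(p-1)$, and $\theta=n-2=2pt-2$ forces $t=1$. This gives $2K_p$, which is non-planar for $p\ge5$, is not triangle-free for $p\ge3$, and whose size is below the extremal bound otherwise.
\end{itemize}
Hence at least one inequality is strict. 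In the planar case we also need $n\ge3$ so that $3n-6$ is a valid bound; the case $n=3$ is checked directly.
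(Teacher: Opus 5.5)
Your proposal is correct and follows the paper's route: Theorem~\ref{coro2} with $k=2$, the bound $\theta\le n-2$ for non-complete graphs, and the extremal sizes $3n-6$ and $n^2/4$. Your separate treatment of the planar complete graphs $K_3,K_4$ (which Theorem~\ref{coro2} excludes), and your check of strictness through the equality cases of Theorem~\ref{coro2}, supply details the paper leaves implicit when it says the result follows ``immediately''.
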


  \begin{theorem} \label{nth1} {\rm \cite{NIK}} Let $G$ be a graph of order
   $n$. Then $\lambda_1(G)+\lambda_2(G)\leq \frac{2\,n}{\sqrt{3}}$.
 \end{theorem}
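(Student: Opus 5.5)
The plan is to pass to the complement $\overline{G}$ and play the two relations $A(G)+A(\overline{G})=J-I$ and $\operatorname{tr}(A^2)=2m$ against each other. I have not checked that the resulting constrained maximum comes out to exactly $2/\sqrt3$; that is the step most likely to need repair. For each of $G$ and $\overline{G}$ we have $\lambda_1^2+\lambda_2^2+\lambda_n^2\le \operatorname{tr}(A^2)$, so the sum of squares controls only the extreme eigenvalues. The complement brings in the extra information that a single squared-eigenvalue bound lacks.

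\textbf{Step 1 (interlacing with the rank-one matrix $J$).} We have $A(G)=J-I-A(\overline{G})$, and $J$ is positive semidefinite of rank one. By Weyl's inequality, $\lambda_2(G)\le \lambda_1(-I-A(\overline{G}))=-1-\lambda_n(\overline{G})$. Hence
$$\lambda_1(G)+\lambda_2(G)\le \lambda_1(G)+|\lambda_n(\overline{G})|-1 .$$

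\textbf{Step 2 (trace constraints).} Write $\overline{m}=\binom n2-m$. Using $\operatorname{tr}(A^2)=2m$ for $G$ and $2\overline{m}$ for $\overline{G}$, we get three inequalities:
$$\lambda_1(G)^2+\lambda_2(G)^2\le 2m,\qquad \lambda_1(\overline{G})^2+\lambda_n(\overline{G})^2\le 2\overline{m}.$$
Combining the second with Lemma~\ref{le1} applied to $\overline{G}$, namely $\lambda_1(\overline{G})\ge 2\overline{m}/n$, gives
$$\lambda_n(\overline{G})^2\le 2\overline{m}-\frac{4\overline{m}^2}{n^2}.$$
Lemma~\ref{le1} for $G$ also gives $\lambda_1(G)\ge 2m/n$, which pins $\lambda_1(G)$ to an interval.

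\textbf{Step 3 (optimization).} Set $x=\lambda_1(G)$, $y=\lambda_2(G)$ and $2m=an^2$, with $0\le a\le 1$ up to lower-order terms. Then $x+y$ satisfies three constraints:
\begin{itemize}
\item $x^2+y^2\le an^2$;
\item $y+1\le n\sqrt{(1-a)a}$ approximately, from Steps 1 and 2;
\item $x\ge an$.
\end{itemize}
The first constraint alone gives $x+y\le n\sqrt{2a}$. The second alone gives $x+y\le n\sqrt a+n\sqrt{a(1-a)}$. The aim is to show that the maximum of $x+y$ under all three constraints, taken over $a$, is at most $2n/\sqrt3$. I expect the extremal value near $a=2/3$, with $x=\frac23 n$ and $y=\frac{\sqrt2}{3}n$ up to rounding, or nearby.

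\textbf{Main obstacle.} Step 3 is the crux, and the bounds above are lossy. A quick check shows that using Step~1 with only $x\le n\sqrt a$ overshoots to about $1.3n$. So the coupling $x^2+y^2\le an^2$ must be used jointly with the complement bound, not each separately. If the joint optimization still overshoots, I would first sharpen Step~2 by feeding in the negative eigenvalues of $G$. Since $\operatorname{tr}A(G)=0$, they sum to at most $-(x+y)$, so their squares contribute at least $(x+y)^2/(n-2)$. This strengthens the first constraint to
$$x^2+y^2+\frac{(x+y)^2}{n-2}\le an^2 .$$
As a fallback, I would try a symmetric treatment. Take orthonormal eigenvectors $u,v$ for $\lambda_1(G),\lambda_2(G)$ and write $\lambda_1+\lambda_2=\operatorname{tr}(A\,W)$ with $W=uu^T+vv^T$. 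Then bound $\operatorname{tr}(AW)\le \operatorname{tr}((J-I)W)$ after splitting $A=A(G)$ against $A(\overline{G})$. The target constant $2/\sqrt3$ should appear from maximizing a quadratic form in the weights $\sum_i (u_i^2+v_i^2)$.
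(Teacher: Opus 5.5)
Your Step~3 cannot succeed with the constraints you list; the relaxation is too weak. In normalized form ($x=\lambda_1/n$, $y=\lambda_2/n$, $2m=an^2$) they read $x^2+y^2\le a$, $y\le\sqrt{a(1-a)}$, $x\ge a$. The point $x=a$, $y=\sqrt{a(1-a)}$ satisfies all three, the first with equality since $a^2+a(1-a)=a$. Maximizing $a+\sqrt{a(1-a)}$ gives $a=\frac{2+\sqrt2}{4}$ and the value $\frac{1+\sqrt2}{2}\approx 1.207$, which exceeds $\frac{2}{\sqrt3}\approx 1.155$. So the joint optimization overshoots.

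Your proposed refinement $x^2+y^2+(x+y)^2/(n-2)\le an^2$ does not rescue this, because it only changes the constraint by $O(n)$, which is invisible at scale $n^2$. The $\operatorname{tr}(AW)$ fallback is too undeveloped to assess. Your guess that the extremum sits near $a=2/3$ is also off. (The paper quotes this result from Nikiforov without proof, so the comparison below is with the standard argument.)

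The loss is in Step~2. The bound $\lambda_n(\overline G)^2\le 2\overline m-4\overline m^2/n^2$ from Lemma~\ref{le1} is weaker than $\overline m$ exactly when $a>1/2$, i.e.\ when $\overline G$ is sparse. That is precisely the regime where $\lambda_1+\lambda_2$ is large.

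The missing ingredient is Perron--Frobenius for the nonnegative matrix $A(\overline G)$. It gives $|\lambda_n(\overline G)|\le\lambda_1(\overline G)$, hence $2\lambda_n(\overline G)^2\le\lambda_1(\overline G)^2+\lambda_n(\overline G)^2\le 2\overline m$, that is, $\lambda_n(\overline G)^2\le\overline m$. With this the proof closes quickly:
\begin{itemize}
\item If $\lambda_2(G)<0$, then $\lambda_1+\lambda_2<n-1$.
\item Otherwise your Step~1 gives $0\le\lambda_2(G)\le -1-\lambda_n(\overline G)<|\lambda_n(\overline G)|$, so $\lambda_2(G)^2\le\overline m$. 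Weighted Cauchy--Schwarz then yields
\[
(\lambda_1+\lambda_2)^2\le\tfrac43\left(\lambda_1^2+3\lambda_2^2\right)=\tfrac43\left((\lambda_1^2+\lambda_2^2)+2\lambda_2^2\right)\le\tfrac43\,(2m+2\overline m)=\tfrac43\,n(n-1).
\]
\end{itemize}
No lower bound on $\lambda_1(G)$ is needed. The Cauchy--Schwarz equality case $\lambda_1=3\lambda_2$, combined with $\lambda_2^2=\overline m$ and $\lambda_1^2+\lambda_2^2=2m$, puts the balancing point at $a=5/6$ with $(x,y)=\bigl(\tfrac{\sqrt3}{2},\tfrac{1}{2\sqrt3}\bigr)$.
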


   \begin{remark} {\rm The results in Theorem \ref{coro2} with $k=2$ and Theorem \ref{nth1} are
   incomparable. Corollary \ref{coro3} gives the two classes of
   graphs in which the result in Theorem \ref{coro2} is better.}
 \end{remark}
 Next, we present an application of Theorem \ref{th4}. Although this theorem demands strong preconditions, the adjacency matrix of a graph inherently satisfies its spectral conditions. Substituting the facts that $tr(A)=0$ and $tr(A^2)=2m$ into the theorem with $s=1$, and invoking Lemmas \ref{le1} and \ref{le2}, we directly derive the following corollary.
 \begin{corollary} {\rm \cite{DMS}}\label{nco1} Let $G$ be a graph of order $n\geq 2$ with negative inertia $\theta$. Then for every integer $k\leq n$,
 \begin{equation*}
 \sum_{i=1}^{k}\lambda_i\leq \frac{n}{2\left(\theta+1\right)}\left(\theta+\sqrt{\theta\left(k\,\theta+k-1\right)\,\,}~\right)
 \end{equation*}
 with equality holding if and only if $G\cong K_{\underbrace{t,\,t,\ldots,\,t}_p}$ $(n=p\,t,\,p\geq 2)$ with $k=1$.
 \end{corollary}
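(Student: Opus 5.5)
We apply Theorem \ref{th4} with $M=A(G)$ and $s=1$. Since $tr(A)=0$ and $tr(A^2)=2m$, the hypotheses of Theorem \ref{th4} with $s=1$ become $\lambda_1(G)>0$ and $\lambda_1(G)\geq \frac{2m}{n}$. Because $G$ has no isolated vertices, it has at least one edge, so $\lambda_1(G)>0$. Lemma \ref{le1} gives $\lambda_1(G)\geq \frac{2m}{n}$. Both hypotheses therefore hold, and for every $k\geq 1$, (\ref{nne1}) with $s=1$ reads
$$\sum_{i=1}^k\lambda_i\leq \frac{n\theta}{2(\theta+1)}+\frac{n}{2(\theta+1)}\sqrt{\theta(k\theta+k-1)},$$
which is exactly the claimed bound.

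For equality, Theorem \ref{th4} says it holds if and only if three conditions hold: $k=1$; $\lambda_1(G)=\frac{2m}{n}>0\geq\lambda_2(G)$; and $\lambda_{n-\theta+1}=\cdots=\lambda_n$. We translate each into graph terms.

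\begin{itemize}
\item By Lemma \ref{le1}, $\lambda_1=\frac{2m}{n}$ forces $G$ to be regular.
\item The condition $\lambda_2\leq 0<\lambda_1$ means the positive inertia is $\nu=1$. By Lemma \ref{le2}, $G$ is then complete multipartite, say $K_{n_1,\dots,n_p}$ with $p\geq 2$.
\item A complete multipartite graph is regular exactly when all parts are equal, so $G\cong K_{t,\dots,t}$ with $n=pt$.
\end{itemize}

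Conversely, (\ref{sunm2}) gives the spectrum $\{t(p-1),0^{(tp-p)},(-t)^{(p-1)}\}$. Here $\lambda_1=t(p-1)=\frac{2m}{n}$, $\lambda_2\leq 0$, and all $\theta=p-1$ negative eigenvalues equal $-t$. So every such graph attains equality at $k=1$. This covers $K_n$ (take $t=1$) and $K_{t,t}$ (take $p=2$).

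The main point needing care is the equality characterization. One must check that regularity together with $\nu=1$ pins down equal parts. This follows because the vertex degrees in $K_{n_1,\dots,n_p}$ are $n-n_i$. The third condition, equality of the negative eigenvalues, then holds automatically. The inequality itself is an immediate substitution.
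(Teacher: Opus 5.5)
Your proposal is correct and takes the same route as the paper: apply Theorem~\ref{th4} with $M=A(G)$ and $s=1$, use Lemma~\ref{le1} to verify the hypothesis $\lambda_1\geq \frac{2m}{n}$, and use Lemmas~\ref{le1} and~\ref{le2} to turn the equality conditions into ``regular and complete multipartite.'' The paper states this reduction in one line, and you additionally supply two details it leaves implicit: that regularity forces equal parts (since the degrees are $n-n_i$), and the converse check via (\ref{sunm2}) that every $K_{t,\ldots,t}$ attains equality at $k=1$.
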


\begin{remark}{\rm 
According to \cite{DMS}, the upper bound for $\sum_{i=1}^{k} \lambda_i$ of a graph established in Corollary \ref{nco1} is sharper than the bound $\frac{n}{2}(\sqrt{k} + 1)$ given by Mohar \cite{MOHAR}.}
\end{remark}

  \subsection{Laplacian matrix}
 This subsection is dedicated to applying Theorem \ref{th3} to the Laplacian matrix, yielding several new bounds on the sum of the $k$ largest eigenvalues. These results advance the resolution of Brouwer's conjecture by offering partial solutions through different theoretical avenues.
 Now we give the first result on Laplacian spectrum by applying Theorem \ref{th3}.
  \begin{theorem}\label{thm31}
   Let $G$ be a graph of order $n$. For every integer $k$, we have
 \begin{equation}
 \sum_{i=1}^{k}\mu_i(G)\leq\frac{2mk}{n}+\sqrt{\left(\sum_{i=1}^{n}d_i^2+2m-
 \frac{4m^2}{n}\right)\,\left(k-\frac{1}{\theta+1}\right)} \label{sunm3}
 \end{equation}
 with equality holding if and only if $G\cong K_{n/2,\,n/2}$ $(n$ is even$)$ and
 $k=1$.
 \end{theorem}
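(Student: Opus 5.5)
The plan is to specialize Theorem \ref{th3} to $M=L(G)$ with $t=1$, and then to use Lemma \ref{newle1} to characterize the equality case.

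\textbf{Setting up the bound.} Since $\sum_i\mu_i=tr(L)=2m$, we have $\mu_1\geq \frac{2m}{n}$. Hence at most $n-1$ Laplacian eigenvalues lie below $\frac{2m}{n}$, that is, $\theta=\theta(L(G))\leq n-1$, and the hypothesis $t=1\leq n-\theta$ of Theorem \ref{th3} holds. Every $k\geq 1$ therefore satisfies $k\geq t$. Next we compute
\[
tr(L^2)=\sum_{i=1}^n\big(d_i^2+d_i\big)=\sum_{i=1}^n d_i^2+2m,
\qquad
tr\Big(\big(L-\tfrac{2m}{n}I\big)^2\Big)=tr(L^2)-\frac{(tr L)^2}{n}=\sum_{i=1}^n d_i^2+2m-\frac{4m^2}{n}.
\]
Substituting these into Theorem \ref{th3} with $t=1$ gives (\ref{sunm3}) directly. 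Here $\theta$ is understood as $\theta(L(G))$, the number of Laplacian eigenvalues below $\frac{2m}{n}$.

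\textbf{Equality, reduction.} The graphs have no isolated vertices, so $m>0$. Since $\mu_n=0<\frac{2m}{n}$, the case $L=\frac{2m}{n}I$ is impossible. By Theorem \ref{th3}, equality therefore forces $k=1$, together with $\mu_1>\frac{2m}{n}\geq\mu_2$ and $\mu_{n-\theta+1}=\cdots=\mu_n$. Because $\mu_n=0$, the last condition says that every eigenvalue below $\frac{2m}{n}$ equals $0$.

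\textbf{Equality, identifying the graph.} This is the step I expect to be the main one. By Lemma \ref{newle1}, if $G\ncong K_{1,n-1}$ then $\mu_2\geq \frac{2m}{n}$. Combined with $\mu_2\leq\frac{2m}{n}$, this gives $\mu_2=\frac{2m}{n}$, and the equality part of Lemma \ref{newle1} then yields $G\cong K_{n/2,n/2}$. It remains to treat the star $K_{1,n-1}$, whose Laplacian spectrum is $\{n,1^{(n-2)},0\}$ and whose average is $\frac{2(n-1)}{n}$.
\begin{itemize}
\item For $n\geq3$ we have $1<\frac{2(n-1)}{n}$, so the eigenvalues below the average include both $1$ and $0$. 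They are not all equal, so equality fails.
\item For $n=2$ the star is $K_2=K_{1,1}$, which is already of the form $K_{n/2,n/2}$.
\end{itemize}

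\textbf{Equality, converse.} For $G=K_{n/2,n/2}$ the Laplacian spectrum is $\{n,(n/2)^{(n-2)},0\}$ and $\frac{2m}{n}=\frac n2$. Thus $\theta=1$, $\mu_1=n>\frac n2=\mu_2$, and the single eigenvalue below the average is $0$. All the conditions of Theorem \ref{th3} hold, so equality is attained at $k=1$.
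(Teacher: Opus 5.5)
Your proposal is correct and follows the paper's proof. Like the paper, you specialize Theorem \ref{th3} to $M=L(G)$ with $t=1$, use $tr((L-\frac{2m}{n}I)^2)=\sum_i d_i^2+2m-\frac{4m^2}{n}$, and settle the equality case with Lemma \ref{newle1}. Your separate treatment of the star is a detail the paper skips when it asserts $\mu_2=\frac{2m}{n}$ directly: for $n\geq 3$ the star violates $\mu_{n-\theta+1}=\cdots=\mu_n$, and for $n=2$ it is already $K_{1,1}$.
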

 
 \begin{pf}
 By setting $t=1$ and $M=L(G)$ in Theorem \ref{th3} with the facts that
 $tr(L(G))=2m$ and $tr((L(G)-\frac{2m}{n}I)^2)=\sum_{i=1}^{n}d_i^2+2m-
 \frac{4m^2}{n}$, we get the result in (\ref{sunm3}) with equality holding if and only if $k=1$ with $\mu_1(G)>\frac{2m}{n}=\mu_2(G)=\cdots=\mu_{n-\theta}$ and
 $\mu_{n-\theta+1}(G)=\mu_{n-\theta+2}(G)=\cdots=\mu_n(G)$, that is, $G$ has three distinct
 Laplacian eigenvalues with $\mu_2(G)=\frac{2m}{n}$.
 By Lemma \ref{newle1}, we conclude that $G\cong K_{n/2,\,n/2}$ ($n$ is even), which completes the proof.
 \end{pf}

 \begin{remark} {\rm One can easily confirm that  
 $$tr\left(\Big(L(G)-\frac{tr(L(G))}{n}I\Big)^2\right)=tr\left(\Big(Q(G)-\frac{tr(Q(G))}{n}I\Big)^2\right),$$ 
 where $Q(G)$ is the signless Laplacian matrix of $G$. Thus, the result in (\ref{sunm3}) also holds for $Q(G)$. Moreover, the result in (\ref{sunm3}) can be improved by setting $t=2$ (Lemma \ref{newle1} ensures that $t$ can be $2$) in Theorem \ref{th3} except star graphs.}
 \end{remark}

Next we present the results on the characterization of graphs with three or four distinct Laplacian eigenvalues, subject to specified conditions.
\begin{lemma} \label{nlesundas1}
    Let $G$ be a connected graph of order $n\geq 3$ and size $m$. Then $G$ has three distinct Laplacian eigenvalues with $\mu_1=n$ and $\mu_2=\mu_{n-1}<\frac{2m+n}{n}$
    if and only if
    $G\cong K_{1,\,n-1}$ or $G\cong K_{\frac{n}{2},\,\frac{n}{2}}$ with even $n$.
    \end{lemma}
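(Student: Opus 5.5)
Both directions reduce to knowing the multiplicities $a,b$ of the two eigenvalues $\mu_1=n$ and $\mu_2=\mu_{n-1}=:\mu$ together with $0$. Since $G$ is connected, Lemma \ref{cor4} gives that $0$ has multiplicity one. Hence the Laplacian spectrum is
$\{n^{(a)},\mu^{(b)},0\}$ with $a+b=n-1$, $a\ge 1$, $b\ge 1$ and $0<\mu<n$.

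For the sufficiency, I will simply compute the spectra. For $K_{1,n-1}$ the Laplacian spectrum is $\{n,1^{(n-2)},0\}$. Then $\mu_2=\mu_{n-1}=1$ and $2m+n=3n-2$, so $1<(3n-2)/n$ holds for $n\ge 3$. For $K_{n/2,n/2}$ the spectrum is $\{n,(n/2)^{(n-2)},0\}$ and $2m=n^2/2$. So $(2m+n)/n=n/2+1>n/2$. Both graphs therefore satisfy the hypotheses.

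For the necessity, I will pass to the complement. By Lemma \ref{newle2}, $S_L(\overline{G})=\{(n-\mu)^{(b)},0^{(a)},0\}$. Since $n-\mu>0$, Lemma \ref{cor4} shows that $\overline G$ has exactly $a+1$ components. Every nonzero Laplacian eigenvalue of $\overline G$ equals the single value $n-\mu$. A connected graph whose nonzero Laplacian eigenvalues all coincide is complete, because $L=cI-cJ/r$ on that component. So each component of $\overline G$ is complete, and on a component with $r$ vertices that common value is $r$. Hence every nontrivial component of $\overline G$ is a $K_r$ with the same $r=n-\mu$, and the remaining components are isolated vertices.

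So $\overline G=qK_r\cup hK_1$, and $G$ is the complete multipartite graph with $q$ parts of size $r$ and $h$ parts of size $1$. Here $r\ge 2$, $q\ge 1$, and $qr+h=n$.

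It remains to use the inequality $\mu<(2m+n)/n$, where $\mu=n-r$. I compute $2m=n(n-1)-qr(r-1)$, so the condition reads
\[
n-r<n-\frac{qr(r-1)}{n},
\qquad\text{that is,}\qquad
q(r-1)<n.
\]
This inequality alone does not pin down the graph. For instance $q=1$ gives $G=K_n$ minus a clique, i.e.\ $K_{r}\vee K_{n-r}$ complemented, which is the split graph $K_{n-r}\vee \overline{K_r}$. Its Laplacian eigenvalues are $n$, $n-r$ and $0$, so it also has three distinct eigenvalues, and it satisfies $q(r-1)=r-1<n$.

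The main obstacle is therefore that the stated conditions seem to admit more graphs than the two listed. This needs rechecking. Either the intended hypothesis is $\mu_2=\mu_{n-1}$ in a stronger sense (for example, also $\mu_2\le 2m/n$ as in Lemma \ref{newle1}), or the condition should be combined with something else.

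Here is my planned way through. Lemma \ref{newle1} gives $\mu_2\ge 2m/n$ unless $G\cong K_{1,n-1}$, with equality only for $K_{n/2,n/2}$. Together with $\mu_2=\mu_{n-1}\le\delta$ from Lemma \ref{newsundas}, this should force $\mu_2\le 2m/n$ in the non-star case, because regularity of $G$ would be needed otherwise. That would leave equality, hence $K_{n/2,n/2}$. The hard step is deriving $\mu\le 2m/n$ from the hypotheses. In the multipartite family above this amounts to $qr(r-1)\le rn$, i.e.\ $q(r-1)\le n$, which is essentially the given inequality. I would first test small cases ($q=1$, $h\ge1$) to see whether the statement needs an extra hypothesis before finalizing.
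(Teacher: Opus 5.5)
\textbf{There is a genuine gap: the direction from the spectral hypotheses to the graph is left unfinished, and your doubt about the statement comes from misreading a hypothesis.} Your spectra for $K_{1,n-1}$ and $K_{n/2,n/2}$ are fine. Your complement argument is exactly the paper's route: Lemma \ref{newle2}, counting components via Lemma \ref{cor4}, and each component complete with common value $r=n-\mu$.

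The missing step is that you never use $\mu_2=\mu_{n-1}$ as a constraint. You treat it as a name for the middle eigenvalue and allow $n$ to have multiplicity $a\ge 1$. But if $a\ge 2$ then $\mu_2=n$, so $\mu_2=\mu_{n-1}$ would force $\mu_{n-1}=n$, leaving only two distinct eigenvalues. Hence $a=1$ and $b=n-2$, so $S_L(\overline G)=\{(n-\mu)^{(n-2)},0,0\}$. Then $\overline G$ has exactly two components, i.e.\ $q+h=2$ in your notation. Since $n\ge 3$ rules out $q=0$, there are two cases:
\begin{itemize}
\item $q=h=1$: then $\overline G=K_{n-1}\cup K_1$, so $G\cong K_{1,n-1}$.
\item $q=2$, $h=0$: then $\overline G=2K_{n/2}$, so $G\cong K_{n/2,n/2}$.
\end{itemize}
That completes the proof. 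The inequality $\mu<\frac{2m+n}{n}$ plays no role in this direction; your own computation shows $q(r-1)<qr\le n$ holds automatically in the family.

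Your proposed counterexample $K_{n-r}\vee\overline{K_r}$ with $n-r\ge 2$ is therefore not one. Its spectrum is $\{n^{(n-r)},(n-r)^{(r-1)},0\}$, so $\mu_2=n\neq n-r=\mu_{n-1}$.

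The ``hard step'' in your last paragraph is also not hard. $G\ncong K_n$, since $K_n$ has only two distinct Laplacian eigenvalues. So Lemma \ref{newsundas}, together with minimum degree $\le$ average degree, gives $\mu_2=\mu_{n-1}\le\delta\le 2m/n$ directly. Combined with Lemma \ref{newle1}, this forces $\mu_2=2m/n$, hence $G\cong K_{n/2,n/2}$, whenever $G\ncong K_{1,n-1}$. That is a valid and even shorter alternative to the complement argument. Separately, your reduction of $\mu\le 2m/n$ to $q(r-1)\le n$ is off; it is $qr\le n$.

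As written, though, the proposal stops short of a proof and wrongly suspects the statement is false.
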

    
\begin{pf}
 We first prove the sufficiency. From the given condition with Lemma \ref{newle2}, we have
 \begin{align}
    S_L(\overline{G})=\Big\{\underbrace{n-\mu_{n-1}(G),\,\ldots,\,n-\mu_{n-1}(G)}_{n-2},\,0,\,0\Big\}\label{kin12}
 \end{align}
 with $\mu_2(G)=\mu_{n-1}(G)<\frac{2m+n}{n}$. If $G\cong K_n$, then $S_L(G)=\Big\{\underbrace{n,\ldots,\,n}_{n-1},\,0\Big\}$, a contradiction as $G$ has three distinct Laplacian eigenvalues. Otherwise, $G\ncong K_n$. Thus
 $\mu_{n-1}(G)<n$ and hence $n-\mu_{n-1}(G)>0$. From (\ref{kin12}), we conclude that $\overline{G}$ is disconnected and has exactly two connected
 complements, say $H_1$ of order $n_1$  and $H_2$ of order $n_2$ such
 that $n_1+n_2=n$, where $n_1\geq n_2$. Then
 $S_L(\overline{G})=S_L(H_1)\cup S_L(H_2)$. Then $H_1\cong K_{n_1}$
 and $H_2\cong K_{n_2}$ due to the result in (\ref{kin12}) and the well-known fact that a connected graph with exactly two distinct
 Laplacian eigenvalues is a complete graph. For $n_2=1$, we obtain $H_2\cong K_1$ and $H_1\cong K_{n-1}$, that is, $G\cong K_{1,n-1}$.
 We now assume that $n_2\geq 2$. Therefore, $H_1$ and $H_2$ both have two distinct Laplacian eigenvalues whose nonzero eigenvalues are the same.
 Due to the above results, we have $H_1\cong K_{n/2}$ and $H_2\cong K_{n/2}$ $(n$ is even$)$, which follows that $G\cong K_{n/2,\,n/2}$ $(n$ is even$)$.

 Next we prove the necessity. If $G\cong K_{1,\,n-1}$, then $\frac{2m+n}{n}=\frac{3n-2}{n}>1$
 and $S_L(G)=\Big\{n,\,\underbrace{1,\ldots,\,1}_{n-2},\,0\Big\}$, which
 satisfies the required conditions. Otherwise, $G\cong
 K_{\frac{n}{2},\,\frac{n}{2}}$.  Then $\frac{2m+n}{n}=\frac{n+2}{2}>\frac{n}{2}$
 and $S_L(G)=\Big\{n,\,\underbrace{\frac{n}{2},\ldots,\,\frac{n}{2}}_{n-2},\,0\Big\}$,
 again satisfying the required conditions.
\end{pf}

Next we  define a set of graphs as follows:
\begin{itemize}
 \item For positive integers $n$ and $t$, $\mathcal{Q}(n,t)$ denotes the class of connected graphs of order $n$ and size $\frac{(n+1)t}{2}$ with exactly three distinct Laplacian eigenvalues and algebraic connectivity equal to $t$. 
\end{itemize}
One can easily confirm that  $\mathcal{Q}(n,\,n-2)=\{K_{n-2}\vee 2K_1\}$.
Moreover, let $G\cong \overline{aK_b\cup cK_1}$, where $a,\,b,\,c$ are positive integers, $a\geq 2$ and $b\geq 3$. By considering the Laplacian spectrum of $\overline{G}$, we obtain that $G$ has two distinct nonzero Laplacian eigenvalues: $ab+c$ and $(a-1)b+c$. In order to make the graph $G$ belongs to 
$\mathcal{Q}(ab+c,\,(a-1)b+c)$, then $a,\,b,\,c$ must satisfy the following equation: 
$$(ab+c)(a+c-1)+(ab+c-b)(ab-a)=(ab+c+1)(ab+c-b),$$
that is, $c=\frac{b(a-1)}{b-2}$. Hence $\overline{aK_b\cup cK_1}\in \mathcal{Q}(ab+c,\,(a-1)b+c)$ with $a,\,b,\,c$ are positive integers, $a\geq 2$, $b\geq 3$ and $c=\frac{b(a-1)}{b-2}$. For example, by taking $b=3$, we have 
$\overline{aK_3\cup (3a-3)K_1}\in \mathcal{Q}(6a-3,\,6a-6)$.
 For convenience, let
 \begin{align*}
 \mathcal{G}_1(n)&=\Big\{K_1\vee\overline{H}|~H\in \mathcal{Q}(n-1,\,t),\,2\leq t\leq n-3\Big\},\\[2mm]
 \mathcal{G}_2(n)&=\Bigg\{\overline{H}\vee tK_1|~H\in \mathcal{Q}(n-t,\,t)~~\mbox{and integer } t\in \Big[2,\,\frac{n}{2}\Big]\Bigg\},\mbox{ and }\\[3mm]
 \mathcal{G}_3(n)&=\Big\{yK_x\vee yK_x|~\mbox{$x$ and $y$ are positive integers such that $2xy=n$, $n$ is even and $y>1$} \Big\}.
 \end{align*}

\begin{lemma} \label{nlesundas2}
    Let $G$ be a connected graph of order $n\geq 4$ and size $m$. Then $G$ has four distinct Laplacian eigenvalues with $\mu_1=n$ and $\mu_2=\frac{2m+n}{n}$ if and only if
    $G\in \mathcal{G}_1(n)\cup\mathcal{G}_2(n)\cup\mathcal{G}_3(n)$ except $K_{\frac{n}{2},\,\frac{n}{2}}$.
    \end{lemma}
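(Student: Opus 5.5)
The plan is to pass to the complement, as in the proof of Lemma~\ref{nlesundas1}, and turn the condition $\mu_2=\frac{2m+n}{n}$ into an edge-count identity for $\overline{G}$.

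\emph{Setting up the complement.} Since $G$ has four distinct Laplacian eigenvalues with $\mu_1=n$, write
\[
S_L(G)=\{n^{(p)},\,\mu_2^{(q)},\,a^{(r)},\,0\},\qquad n>\mu_2>a>0,\qquad p+q+r=n-1 .
\]
By Lemma~\ref{newle2},
\[
S_L(\overline{G})=\{(n-a)^{(r)},\,(n-\mu_2)^{(q)},\,0^{(p+1)}\}.
\]
So by Lemma~\ref{cor4}, $\overline{G}$ has exactly $p+1\ge 2$ components. Each component has nonzero Laplacian eigenvalues in the set $\{\alpha,\beta\}$, where $\alpha=n-a>\beta=n-\mu_2>0$.

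Using that a connected graph with exactly two distinct Laplacian eigenvalues is complete, every component $H_j$ falls into one of four types:
\begin{itemize}
\item $K_1$;
\item $K_{\beta+0}$, i.e.\ $K_\beta$ (needs $\beta$ an integer);
\item $K_\alpha$ (needs $\alpha$ an integer);
\item a connected graph with exactly three distinct Laplacian eigenvalues $0,\beta,\alpha$, so its algebraic connectivity is $\beta$.
\end{itemize}
Both $\alpha$ and $\beta$ must actually occur somewhere in $\overline{G}$.

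\emph{Translating the hypothesis.} Let $\overline{m}=\binom n2-m$ be the size of $\overline{G}$. The hypothesis $\mu_2=1+\frac{2m}{n}$ gives
\[
\beta=n-1-\frac{2m}{n}=\frac{2\overline{m}}{n},
\]
so $\beta$ equals the average degree of $\overline{G}$. Next compare this with each component. For a component $H_j$ of order $n_j$ and size $m_j$, we have $2m_j=\operatorname{tr}L(H_j)=r_j\alpha+q_j\beta$, where $q_j+r_j=n_j-1$. Summing over components, the identity $2\overline{m}=n\beta$ becomes a linear relation in the multiplicities,
\[
\sum_j r_j(\alpha-\beta)=(p+1)\beta,
\]
where $p+1$ is the number of components.

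\emph{Case analysis.} The relation above is the tool for restricting which component types can coexist.
\begin{itemize}
\item Components $K_1$ contribute nothing on the left, and components $K_\beta$ contribute $0$ to $\sum_j r_j$. So the excess $\alpha-\beta$ carried by the $K_\alpha$'s and the three-eigenvalue components must pay for $\beta$ per component.
\item If a three-eigenvalue component $H$ of order $n'$ occurs alongside isolated vertices or a $K_\beta$, the relation localises to $2m(H)=(n'+1)\beta$ with algebraic connectivity $\beta$, i.e.\ $H\in\mathcal{Q}(n',\beta)$.
\item With one isolated vertex, $\overline{G}=K_1\cup H$ and $G=K_1\vee\overline{H}$; this is $\mathcal{G}_1(n)$.
\item With a $K_t$ where $t=\beta$, $\overline{G}=H\cup K_t$ and $G=\overline{H}\vee tK_1$; this is $\mathcal{G}_2(n)$.
\item If every component is $K_\alpha$ or $K_\beta$ only, $G$ would have at most three distinct eigenvalues, so this is excluded.
\item If all components have three eigenvalues and no $K_1$ or $K_\beta$ absorbs the deficit, the relation forces regular-type equalities. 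I expect this to yield $\overline{G}=2K_{x,\dots,x}$ ($y$ parts), i.e.\ $G=yK_x\vee yK_x\in\mathcal{G}_3(n)$. The case $y=1$ gives $K_{n/2,n/2}$, which has only three eigenvalues and is excluded.
\end{itemize}

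\emph{Converse.} For each class, compute $S_L(\overline{G})$ from the components, apply Lemma~\ref{newle2}, and check the trace identity; this is routine.

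\emph{Main obstacle.} The hard step is bounding the number and mixture of components, in particular ruling out more than two nontrivial components and mixtures such as several $K_1$'s together with $K_\alpha$'s. I would attack it by combining the relation $\sum_j r_j(\alpha-\beta)=(p+1)\beta$ with the order bounds $n_j\ge\alpha$ for components containing eigenvalue $\alpha$ (since the Laplacian spectral radius is at most the order). I would also use Lemma~\ref{newle3} on each component to compare $\alpha$ with maximum degrees. Together these should squeeze the possibilities down to the three listed families.
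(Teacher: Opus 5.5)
\textbf{There is a genuine gap: one case is dismissed for a false reason, and the substantive cases are left unproved.}

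\textbf{Setup, and what you missed.} Your passage to $\overline{G}$ and the identity $\sum_j r_j(\alpha-\beta)=(p+1)\beta$ are correct. However, you missed the paper's first observation. The quantity $\mu_2$ is the \emph{second largest} eigenvalue, and $\frac{2m+n}{n}<n$ because $G\not\cong K_n$ ($K_n$ has only two distinct eigenvalues). So $n$ is a simple eigenvalue, $p=1$, and $\overline{G}=H_1\cup H_2$ has exactly two components. The ``main obstacle'' you single out (many components, several $K_1$'s mixed with $K_\alpha$'s) therefore never arises, and your relation reads simply $r(\alpha-\beta)=2\beta$.

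\textbf{A case you wrongly dismiss.} Take $\overline{G}=K_\alpha\cup K_\beta$ with $\alpha>\beta\ge 2$. Then $G=K_{\alpha,\beta}$, whose Laplacian spectrum $\{n,\alpha,\beta,0\}$ has four distinct values, so your claim ``at most three distinct eigenvalues'' is false. This case must instead be excluded by the hypothesis: $(\alpha-1)(\alpha-\beta)=2\beta$ forces $\beta=\alpha-2+\frac{2}{\alpha+1}\notin\mathbb{Z}$. This is the computation that opens the paper's Case~2.

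\textbf{The cases you only announce.} The real content of the lemma lies in two cases you state but do not prove: both components non-complete, and $K_\alpha$ beside a non-complete $H$. The bounds you propose ($n_j\ge\alpha$, $\alpha\ge\Delta+1$) control $\alpha$. What closes both cases is an \emph{upper} bound on $\beta$: for a non-complete component, $\beta=\mu_{n_j-1}(H_j)\le\delta(H_j)$ by Lemma~\ref{newsundas}, so $2m_j\ge n_j\beta$.

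\emph{Both components non-complete.} Summing $2m_j\ge n_j\beta$ against $2\overline{m}=n\beta$ forces both components to be $\beta$-regular. Their adjacency eigenvalues are then $\beta,0,\beta-\alpha$, so the positive inertia is $1$. Lemmas~\ref{le2} and~\ref{le4} then give $H_1\cong H_2\cong K_{x,\ldots,x}$ with $x,y\ge 2$. This is the paper's route; it phrases the regularity step as $\mu_2(G)=n-\min_i\mu_{n_i-1}(H_i)\ge n-\delta(\overline{G})=\Delta(G)+1$.

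\emph{$\overline{G}=K_\alpha\cup H$.} Here $\alpha(\alpha-1)+2m_H=(\alpha+n_H)\beta$ and $2m_H\ge n_H\beta$ give $\beta\ge\alpha-1$. Since $\beta=n-1-\frac{2m}{n}$ is a rational algebraic integer, it is an integer, so $\beta=\alpha-1$. Then $H$ is $\beta$-regular with least adjacency eigenvalue $-1$, hence complete, a contradiction. The paper reaches this through Cases~2.2--2.3 together with its regular-graph analysis.

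\textbf{The range of $t$ in $\mathcal{G}_1$.} Your $\mathcal{G}_1$ step, like the paper's Case~1, never checks $2\le\beta\le n-3$. This holds for $n\ge 5$, but not for $n=4$. The paw $K_1\vee(K_2\cup K_1)$ has Laplacian spectrum $\{4,3,1,0\}$ and $\mu_2=3=\frac{2m+n}{n}$, with $\overline{G}=K_1\cup P_3$ and $\beta=1$. It lies in no $\mathcal{G}_i(4)$, so the statement as written needs $n\ge 5$, or $t\ge 1$ allowed in $\mathcal{G}_1(n)$.
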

    
\begin{pf}
 We first prove the sufficiency. From the given condition, we have
 $$S_L(G)=\left\{n,\,\underbrace{\frac{2m+n}{n},\,\ldots,\,\frac{2m+n}{n}}_{n-\ell-2},\,\underbrace{\mu_{n-1}(G),\ldots,\,\mu_{n-1}(G)}_{\ell},\,0\right\}, ~~1\leq \ell\leq n-3$$
 with $\mu_{n-1}(G)<\frac{2m+n}{n}$. By Lemma \ref{newle2} with the above result, we have
\begin{equation}
    S_L(\overline{G})=\left\{\underbrace{n-\mu_{n-1}(G),\,\ldots,\,n-\mu_{n-1}(G)}_{\ell},\,\underbrace{\frac{n^2-2m-n}{n},\ldots,\,\frac{n^2-2m-n}{n}}_{n-\ell-2},\,0,\,0\right\}.\label{sunm5}
\end{equation}
Since $\mu_2(G)<n$, from (\ref{sunm5}), we conclude that
$\overline{G}$ is disconnected and has exactly two connected
complements, say $H_1$ of order $n_1$ and size $m_1$, and $H_2$ of
order $n_2$ such that $n_1+n_2=n$, where $n_1\geq n_2$. Then
$S_L(\overline{G})=S_L(H_1)\cup S_L(H_2)$. Depending on the value of
$n_2$, we now consider the following two cases:

\vspace*{3mm}

\noindent ${\bf Case\,1.}$ $n_2=1$. Then $G\cong K_1\vee
\overline{H}_1$. In this case, $\mu_{n-1}(G)\geq 1$. From
(\ref{sunm5}), we obtain
\begin{align}
    S_L(\overline{H_1})=\left\{\underbrace{\frac{2m}{n},\,\ldots,\,\frac{2m}{n}}_{n-\ell-2},\,\underbrace{\mu_{n-1}(G)-1,\ldots,\,\mu_{n-1}(G)-1}_{\ell},\,0\right\},\nonumber
\end{align}
$$~~\mbox{that is, }
    S_L(H_1)=\left\{\underbrace{n-\mu_{n-1}(G),\ldots,\,n-\mu_{n-1}(G)}_{\ell},\,\underbrace{n-1-\frac{2m}{n},\,\ldots,\,n-1-\frac{2m}{n}}_{n-\ell-2},\,0\right\},$$
where $\mu_1(H_1)=n-\mu_{n-1}(G)>n-1-\frac{2m}{n}=\mu_{n-2}(H_1)$.
 As $2m=n(n-1)-2m_1$, we get $2m_1=n\,\mu_{n-2}(H_1)$. Therefore, $H_1\in \mathcal{Q}(n-1,\,t)$ with $2\leq t\leq n-2$, which implies that $G\in \mathcal{G}_1(n)$.

\vspace*{3mm}

\noindent ${\bf Case\,2.}$ $n_2\geq 2$. If $H_1$ and $H_2$  are both
complete, then $G\cong K_{n_1,\,n_2}$. We have
 $$\mu_2(G)=n_1=\frac{2n_1n_2}{n_1+n_2}+1,$$
 which implies that $n_2=n_1-2+\frac{2}{n_1+1}$. As $n_1$ and $n_2$
 are both positive integers, then $n_1=1$ and $n_2=0$, a contradiction.
 Otherwise, at least one of $H_1$ and $H_2$ is not complete.

\vspace*{3mm}

\begin{claim}\label{c2} $G$ is regular or  $G\cong \overline{H}_1\vee n_2K_1$, where $H_1\in \mathcal{Q}(n_1,n_2)$.
\end{claim}

\vspace*{3mm}

\noindent {\bf Proof of Claim \ref{c2}.} Since at least one of $H_1$ and $H_2$ is not complete, we consider the following three cases:

\vspace*{2mm}

\noindent
${\bf Case\,2.1.}$ Both $H_1$ and $H_2$ are not complete. Thus we have $\mu_{n_i-1}(H_i)\leq\delta(H_i)$
 for $i=1,\,2$ by Lemma \ref{newsundas}. Then
    $$\frac{2m+n}{n}=\mu_2(G)=n-\min\Big\{\mu_{n_1-1}(H_1),\,\mu_{n_2-1}(H_2)\Big\}\geq n-\min\Big\{\delta(H_1),\,\delta(H_2)\Big\}=\Delta(G)+1,$$
that is, $2m\geq n\,\Delta$ and hence $2m=n\,\Delta$. Thus $G$ is regular.

 \vspace*{3mm}

\vspace*{2mm}

\noindent
${\bf Case\,2.2.}$ $H_1\cong K_{n_1}$. Then $H_2$ is not complete. This with Lemma \ref{newsundas}, we obtain
 \begin{eqnarray}
 \mu_2(G)&=&\frac{2m}{n}+1=n-\min\Big\{\mu_{n_1-1}(H_1),\,\mu_{n_2-1}(H_2)\Big\}=n-\min\Big\{n_1,\,\mu_{n_2-1}(H_2)\Big\}\nonumber\\[2mm]
 &\geq & n-\min\{n_1,\,\delta(H_2)\}\nonumber\\[2mm]
 &=&n-\delta(H_2) =n-\delta(\overline{G})~~~\mbox{as $n_1\geq n_2$}\nonumber\\[2mm]
 &=&\Delta(G)+1.\nonumber
 \end{eqnarray}
 Hence $G$ is regular.

 \vspace*{3mm}

\vspace*{2mm}

\noindent
${\bf Case\,2.3.}$ $H_2\cong K_{n_2}$. Thus we have $H_1\ncong K_{n_1}$. If $\mu_{n_1-1}(H_1)>n_2$, then we have
 $\mu_{1}(H_1)=\mu_{n_1-1}(H_1)$ as $S_L(\overline{G})=S_L(H_1)\bigcup S_L(H_2)$ and the result in (\ref{sunm5}).
 This follows that $H_1$ is complete, a contradiction. Otherwise, $\mu_{n_1-1}(H_1)\leq n_2$. In this case, we have
 \begin{equation}
 \frac{2m}{n}+1=\mu_2(G)=n-\min\{\mu_{n_1-1}(H_1),\,n_2\}=n-\mu_{n_1-1}(H_1).\label{smdas}
 \end{equation}
  Now we first assume that $\mu_{n_1-1}(H_1)\leq \delta(\overline{G})$. By (\ref{smdas}), we have
  $\frac{2m}{n}+1\geq n-\delta(\overline{G})=\Delta(G)+1$, which implies that $G$ is regular.
  Next we assume that $\mu_{n_1-1}(H_1)> \delta(\overline{G})$. By Lemma
  \ref{newsundas} , we have $\delta(H_1)\geq
  \mu_{n_1-1}(H_1)>\delta(\overline{G})=\min\{\delta(H_1),\,n_2-1\}$.
  It follows that $\delta(H_1)>\delta(\overline{G})=n_2-1$ and hence $\delta(H_1)\geq
  n_2$. Then we also have $\mu_1(H_1)\geq \Delta(H_1)+1\geq n_2+1$
  by Lemma \ref{newle3}. Since $S_L(\overline{G})=S_L(H_1)\bigcup
  S_L(H_2)$ contains two nonzero distinct entries and $n_2\in
  S_L(H_2)$ with the above results, then $\mu_{n_1-1}(H_1)=n_2$. By (\ref{smdas}), we have
  $$n_1=n- \mu_{n_1-1}(H_1)=\frac{2m}{n}+1=\frac{n_1(n_1-1)-2m_1+2n_1n_2}{n_1+n_2}+1,$$
  that is, $2m_1=(n_1+1)n_2$. Hence $H_1\in \mathcal{Q}(n_1,\,n_2)$.
  This completes the proof of  ${\bf Claim\,\ref{c2}.}$

\vspace*{3mm}

 \noindent
 By ${\bf Claim\,\ref{c2}}$ with facts that $n_1\geq n_2\geq 2$, $G\in \mathcal{G}_2(n)$ or $G$ is regular. Thus it remains to prove for regular graph $G$. Let $G$ be $r$-regular.
 From this, we conclude that $H_1$ and $H_2$ both are $(n-r-1)$-regular graphs. For convenience, let $q=\mu_{n-1}(G)$. By (\ref{sunm5}), we have
  $$ S_L(\overline{G})=\Bigg\{\underbrace{n-q,\,\ldots,\,n-q}_{\ell},\,\underbrace{n-r-1,\,\ldots,\,n-r-1}_{n-\ell-2},\,0,\,0\Bigg\},$$
  where $1\leq \ell\leq n-3$. As $\overline{G}$ is regular, then
  $$ S_A(\overline{G})=\Bigg\{n-r-1,\,n-r-1,\,\underbrace{0,\,\ldots,\,0}_{n-\ell-2},\,\underbrace{q-r-1,\,\ldots,\,q-r-1}_{\ell}\Bigg\}.$$
 Since $\overline{G}\cong H_1\bigcup H_2$, then $\lambda_2(H_i)\leq
 0$ for $i=1,\,2$. Note that $\lambda_2(H_1)\lambda_2(H_2)=0$ as $\ell\leq
 n-3$. Without loss of generality, we assume that $\lambda_2(H_1)\geq
 \lambda_2(H_2)$. If $\lambda_2(H_1)>\lambda_2(H_2)$, then
 $\lambda_2(H_1)=0$ and $\lambda_2(H_2)=q-r-1<0$. In this case, $H_1$ has three distinct eigenvalues
 and $H_2$  has two distinct eigenvalues, which follows $H_2\cong K_{n_2}$. This implies
 $\lambda_{n_2}(H_2)=-1=q-r-1$, that is, $q=r$.
 By Lemmas \ref{le2} and \ref{le4} with the fact that $H_1$ is regular, we obtain $H_1\cong
 K_{\underbrace{t,\,\ldots,\,t}_s}$ with $s,\,t\geq 2$. By
 (\ref{sunm2}), we have $\lambda_{n_1}(H_1)=-t=q-r-1=-1$ as $q=r$,
 a contradiction. Otherwise, $\lambda_2(H_1)=\lambda_2(H_2)=0$.
 Similarly, we obtain that $H_1$ and $H_2$ both are regular complete
 multipartite graphs by Lemmas \ref{le2} and \ref{le4}. As they have
 same nonzero eigenvalues with the result in (\ref{sunm2}), we
 conclude that $H_1\cong H_2\cong K_{\underbrace{x,\,\ldots,\,x}_y}$ with $x,\,y\geq
 2$ and $2xy=n$, that is, $G\in  \mathcal{G}_3(n)\setminus \{\frac{n}{2}K_1\vee \frac{n}{2}K_1\}$. This completes the proof of the sufficiency.

\vspace*{3mm}

\noindent
 Next we prove the necessity by showing the graphs from $\mathcal{G}_i(n)$ satisfy (\ref{sunm5}) for $i=1,\,2,\,3$ except $\frac{n}{2}K_1\vee \frac{n}{2}K_1$.
 For $G\in \mathcal{G}_1(n)$, then $\overline{G}=K_1\cup H$, where
 $H\in \mathcal{Q}(n-1,\,t)$ with $2\leq t\leq n-3$. By the
 definition of $\mathcal{Q}(n-1,\,t)$, $H$ is connected, and has order $n-1$ and size
 $\frac{nt}{2}$ with three distinct Laplacian eigenvalues and
 algebraic connectivity $t$. Thus $\overline{G}$ has three distinct
 Laplacian eigenvalues with two zero Laplacian eigenvalues.
 As $\overline{G}=K_1\cup H$, we have
 $n(n-1)-2m=nt$, that is, $t=\frac{n^2-n-2m}{n}$. Therefore, the Laplacian spectrum of
 $\overline{G}$ satisfies the result in (\ref{sunm5}).

 For $G\in \mathcal{G}_2(n)$, then $\overline{G}=K_t\cup H$, where
 $H\in \mathcal{Q}(n-t,\,t)$ with $2\leq t\leq \frac{n}{2}$. Again
 by the definition of $\mathcal{Q}(n-t,\,t)$, we conclude that $\overline{G}$ has three distinct
 Laplacian eigenvalues with two zero Laplacian eigenvalues and the
 smallest nonzero Laplacian eigenvalue equal to
 $t=\frac{n^2-n-2m}{n}$, which satisfies the result in (\ref{sunm5}).

  For $G\in \mathcal{G}_3(n)\setminus \{\frac{n}{2}K_1\vee \frac{n}{2}K_1\}$, then
  $\overline{G}=2K_{\underbrace{x,\,\ldots,\,x}_y}$ with $x,\,y\geq 2$ and
  $2xy=n$. Note that $m=x^2y^2+x(x-1)y$ and
  $S_L(K_{\underbrace{x,\,\ldots,\,x}_y})=\{\underbrace{xy,\ldots,\,xy}_{y-1},\,\underbrace{xy-x,\ldots,\,xy-x}_{xy-y},\,0\}$.
 It implies that $\overline{G}$ has three distinct
 Laplacian eigenvalues with two zero Laplacian eigenvalues and the
 smallest nonzero Laplacian eigenvalue equal to
 $\frac{n^2-n-2m}{n}$ as
 $$\frac{n^2-n-2m}{n}=\frac{4x^2y^2-2xy-2xy(xy+x-1)}{2xy}=xy-y$$ 
 and $x,\,y\geq 2$, again satisfying the result in (\ref{sunm5}). This completes
 the proof.
\end{pf}

Next, we establish another upper bound on the sum of the $k$ largest Laplacian eigenvalues by applying Theorem \ref{th3}. Notably, this bound is attained by  a wide class of graphs.
 \begin{theorem} \label{thm32}
    Let $G$ be a graph of order $n\geq 2$ and let $\sigma$ be the number of Laplacian eigenvalues of $G$ less than
    $\frac{2m}{n}+1$. Then for any integer $k$, we have
 \begin{equation}
    \sum_{i=1}^{k}\mu_i(G)\leq\frac{(2m+n)(k+1)}{n}-n+\sqrt
    {\left(\sum_{i=1}^{n}d^2_i+n^2-n-2m-\frac{4m^2}{n}\right)\,\left(k+1-\frac{4}{\sigma+1}\right)} \label{sunm4}
 \end{equation}
 with equality holding if and only if $G\cong K_n$ or $k=1$ with $G\cong K_{1,\,{n-1}}$ or $k=1$ with $G\in \bigcup_{i=1}^3\mathcal{G}_i(n)$.
      \end{theorem}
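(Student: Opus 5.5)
The plan is to apply Theorem \ref{th3} with $t=2$ to the shifted matrix $M=L(G)+J$, where $J$ is the all-ones matrix.

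\emph{Setting up $M$.} By Lemma \ref{cor3} with $a=1$,
$$S_M=\{\mu_1,\dots,\mu_{n-1},n\}.$$
Since $\mu_1\le n$ always holds, $\lambda_1(M)=n$. Hence, for $k\ge 1$,
$$\sum_{i=1}^{k}\mu_i=\sum_{i=1}^{k+1}\lambda_i(M)-n,$$
and this is where the $-n$ and the $k+1$ in (\ref{sunm4}) come from. Next I compute the quantities Theorem \ref{th3} needs.
\begin{itemize}
\item The trace is $tr(M)=2m+n$, so $\frac{tr(M)}{n}=\frac{2m}{n}+1$.
\item Since $LJ=JL=0$, we have $M^2=L^2+nJ$. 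Therefore $tr(M^2)=\sum_i d_i^2+2m+n^2$.
\item Consequently $tr\big((M-\frac{tr(M)}{n}I)^2\big)=tr(M^2)-\frac{(2m+n)^2}{n}=\sum_i d_i^2+n^2-n-2m-\frac{4m^2}{n}$.
\end{itemize}
For the negative part, the eigenvalues of $M$ below $\frac{2m}{n}+1$ are exactly the Laplacian eigenvalues below that value, except the eigenvalue $0$, which has been replaced by $n$. Thus $\theta(M)=\sigma-1$, and so $\frac{t^2}{\theta+t}=\frac{4}{\sigma+1}$.

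\emph{Checking the hypothesis $t=2\le n-\theta$.} This is equivalent to $\sigma\le n-1$, i.e.\ to $\mu_1\ge \frac{2m}{n}+1$. Since $G$ has no isolated vertices, Lemma \ref{newle3} gives $\mu_1\ge\Delta+1\ge \frac{2m}{n}+1$. I then apply Theorem \ref{th3} with $k+1\ge 2$ in place of $k$, which yields (\ref{sunm4}) directly; the degenerate case $k=0$ is trivial.

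\emph{Equality.} By Theorem \ref{th3}, equality holds in one of two situations.
\begin{itemize}
\item $M=\frac{tr(M)}{n}I$. Then every $\mu_i$ with $i\le n-1$ equals $n$, so $G\cong K_n$.
\item $k+1=2$ and the spectrum of $M$ satisfies $\lambda_1=\lambda_2>\frac{2m}{n}+1\ge\lambda_3$, with all eigenvalues in positions $3,\dots,n-\theta$ equal to $\frac{2m}{n}+1$ and the bottom $\theta$ eigenvalues equal. Translated back, this says $k=1$, $\mu_1=n$, every $\mu_i$ with $2\le i\le n-\sigma$ equals $\frac{2m}{n}+1$, and $\mu_{n-\sigma+1}=\dots=\mu_{n-1}$. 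Here $\theta=\sigma-1\ge 1$ is forced, because the equality case requires $t\le n-\theta$ with $\lambda_3\le\frac{2m}{n}+1$.
\end{itemize}

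\emph{Classifying the second equality case.} If $G$ were disconnected, $0$ would occur among $\mu_{n-\sigma+1},\dots,\mu_{n-1}$, so that whole block would be $0$. I plan to rule this out or absorb it; I expect it to contradict $\mu_1=n$ with no isolated vertices, since a disconnected graph has $\mu_1<n$. So $G$ is connected. Two subcases remain.
\begin{itemize}
\item If no $\mu_i$ equals $\frac{2m}{n}+1$ for $i\ge2$, then $G$ has three distinct Laplacian eigenvalues with $\mu_1=n$ and $\mu_2=\mu_{n-1}<\frac{2m+n}{n}$. Lemma \ref{nlesundas1} then gives $K_{1,n-1}$ or $K_{n/2,n/2}$; note that $K_{n/2,n/2}=\frac n2K_1\vee\frac n2K_1\in\mathcal{G}_3(n)$.
\item Otherwise $G$ has four distinct Laplacian eigenvalues with $\mu_1=n$ and $\mu_2=\frac{2m+n}{n}$, and Lemma \ref{nlesundas2} gives $G\in\bigcup_{i=1}^{3}\mathcal{G}_i(n)$.
\end{itemize}
Conversely, these graphs attain equality by the same lemmas, and $K_n$ attains it for all $k$.

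The main obstacle I expect is the bookkeeping in this last step: converting the equality condition of Theorem \ref{th3} on $M$ into the exact spectral conditions of Lemmas \ref{nlesundas1} and \ref{nlesundas2}. In particular, I must rule out the disconnected and degenerate configurations, and check that the exceptional graph $K_{n/2,n/2}$ of Lemma \ref{nlesundas2} is recovered through Lemma \ref{nlesundas1}.
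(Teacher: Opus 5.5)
Your proposal is correct and follows the paper's proof essentially step for step. You apply Theorem~\ref{th3} with $t=2$ to $M=L+J$ (with $k+1$ in place of $k$), use $\theta(M)=\sigma-1$ and $\mu_1\ge\Delta+1\ge\frac{2m}{n}+1$ to check $t\le n-\theta$, and reduce the equality case to Lemmas~\ref{nlesundas1} and~\ref{nlesundas2}. Your explicit remark that $\mu_1=n$ forces $G$ to be connected fills in a step the paper leaves implicit when invoking those lemmas (a disconnected graph has $\mu_1\le\max_i n_i<n$, so the isolated-vertex assumption is not needed there). Your stated reason for $\theta(M)\ge 1$ is off, however: it actually follows from $tr\big(M-\frac{tr(M)}{n}I\big)=0$ together with $\lambda_1(M)=n>\frac{2m}{n}+1$ when $G\ncong K_n$.
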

  \begin{proof}
 Let $M=L+J$. It is clear that $\lambda_1(M)=n$ and
 $\lambda_i(M)=\mu_{i-1}(G)$ for $2\leq i\leq n$, which follows that
 $\theta(M)=\sigma-1$ and $\lambda_2(M)=\mu_{1}(G)\geq \frac{2m}{n}+1$ (as $\mu_{1}(G)\geq \Delta+1$). Moreover, after simple calculation, we have
$$tr\left((M-\frac{tr(M)}{n}I)^2\right)=\sum_{i=1}^{n}d^2_i+n^2-n-2m-\frac{4m^2}{n}.$$
  By setting $t=2$ and $M=L+J$ in Theorem \ref{th3} and combining the above facts, we get
  \begin{equation*}
 \sum_{i=1}^{k+1}\lambda_i(M)\leq \frac{(2m+n)(k+1)}{n}+\sqrt
    {\left(\sum_{i=1}^{n}d^2_i+n^2-n-2m-\frac{4m^2}{n}\right)\left(k+1-\frac
    {4}{\theta(M)+2}\right)}
 \end{equation*}
 with equality holding if and only if $M=\frac{2m+n}{n}I$ or $k=1$
 with $\lambda_1(M)=\lambda_2(M)>\frac{2m}{n}+1\geq\lambda_3(M)$
    and $\lambda_{n-\theta(M)+1}(M)=\cdots=\lambda_{n}(M)$, that is,
 the result in (\ref{sunm4}) holds with
 the equality holding if and only if $G\cong K_n$, or $k=1$ with
$$\mbox{$\mu_1(G)=n>\frac{2m}{n}+1=\mu_2(G)=\cdots=\mu_{n-\sigma}(G)$ and $\frac{2m}{n}+1>\mu_{n-\sigma+1}(G)=\cdots=\mu_{n-1}(G)$}$$
or
$$\mbox{$\mu_1(G)=n>\frac{2m}{n}+1>\mu_2(G)=\cdots=\cdots=\mu_{n-1}(G)$}.$$
 By Lemmas \ref{nlesundas1} and \ref{nlesundas2}, we get required extremal graphs.
 This completes the proof of this theorem.
  \end{proof}

Brouwer's conjecture has been confirmed for all graphs of order up to $10$ and is true for $k\in\{1, 2, n-3, n-2, n-1, n\}$ {\rm \cite{D,F}}.
   In addition, Conjecture \ref{conj1} has also been proven to hold for certain classes of graphs,
   such as trees{\rm \cite{F}}, unicyclic graphs{\rm \cite{G,H}}, bicyclic graphs{\rm \cite{G}},
   threshhold graphs{\rm \cite{F}}, regular graphs{\rm \cite{I}}, cographs{\rm \cite{I}} and split graphs{\rm \cite{I}}.
   However, it is still open for general graphs. Next we will apply the above results to give a partial answer on this conjecture. 


\begin{theorem}\label{th6}
  Let $G$ be a simple graph on $n$ vertices and $m$ edges. Then the Brouwer's conjecture holds
  for $m\geq (k+1)n$ and $k=cn^{\alpha}+O(1)$ with sufficiently large $n$, where $c$ and $\alpha$ satisfy one of the following
  conditions:\\
 $(i)$ $0\leq\alpha<\frac{1}{2}$ and $c>0$;\\
 $(ii)$ $\alpha=\frac{1}{2}$ and $0<c<1$;\\
 $(iii)$ $\alpha=1$ and $c>\frac{4}{9}$.
\end{theorem}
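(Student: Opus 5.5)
The plan is to combine Theorem~\ref{thm31} with the degree-sum bound of Lemma~\ref{2le1}, which reduces Conjecture~\ref{conj1} to an inequality in $n$, $m$, $k$ alone. That inequality is then checked for $m\ge (k+1)n$ in each of the three regimes for $k$.

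\emph{Reduction.} In Theorem~\ref{thm31} we have $k-\frac{1}{\theta+1}\le k$. Lemma~\ref{2le1} gives
\[
\sum_i d_i^2+2m-\frac{4m^2}{n}\le \frac{2m^2}{n-1}+nm-\frac{4m^2}{n}=nm-\frac{2(n-2)m^2}{n(n-1)}=:Q(m).
\]
Hence $\sum_{i=1}^k\mu_i\le \frac{2mk}{n}+\sqrt{kQ(m)}$, and it suffices to prove
\begin{equation*}
F(m):=\Big(m\big(1-\tfrac{2k}{n}\big)+\tbinom{k+1}{2}\Big)^2-k\,Q(m)\ge 0
\qquad\text{for } (k+1)n\le m\le \tbinom n2 .
\end{equation*}
The base is nonnegative because $k=o(n)$ in regimes (i),(ii). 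In regime (iii) the hypothesis $m\ge(k+1)n$ with $m\le\binom n2$ forces $k<n/2$, so again $1-\frac{2k}{n}>0$.

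\emph{Shape of $F$.} Since $-kQ$ is a convex quadratic in $m$ and the first term is the square of an affine function, $F$ is a convex quadratic in $m$. The plan is to locate its minimiser and verify $F\ge 0$ either at the constrained minimiser or on the whole interval. If the vertex lies left of $(k+1)n$, it suffices to check $F((k+1)n)\ge0$. Otherwise I will compute the discriminant of $F$ and show it is negative.

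\emph{Heuristic for why $m\ge(k+1)n$ matters.} Write $m=xn^2/2$. To leading order we need $\frac{x^2n^4}{4}(1-\frac{2k}{n})^2\gtrsim \frac{k x(1-x)n^3}{2}$. The hypothesis $x\ge 2(k+1)/n$ gives $\frac{x^2n^4}{4}\ge \frac{(k+1)xn^3}{2}$, which beats the right side, with the margin coming from both the $+1$ and the factor $(1-x)$.

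The competing loss is the factor $(1-\frac{2k}{n})^2\approx 1-\frac{4k}{n}$, together with the lower-order terms $\binom{k+1}{2}$ and the correction $\frac{2m^2}{n(n-1)}$ in $Q$. How the margin balances this loss should produce exactly the stated regimes:
\begin{itemize}
\item for $\alpha<\tfrac12$ the losses are $o(\text{margin})$ for every $c>0$;
\item at $\alpha=\tfrac12$, terms of order $k^2/n$ compete with the $+1$ margin, which is where a threshold like $c<1$ enters;
\item for $\alpha=1$, i.e.\ $k=cn$, all terms are of the same order. Then $F$ becomes, after dividing by $n^4$, a polynomial in $(x,c)$ on $2c\le x\le 1$, whose positivity is a finite algebraic check that should yield $c>\frac49$ (large $k$ being helped by the positive $\binom{k+1}{2}^2$ term).
\end{itemize}

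\emph{Main obstacle.} The hard part is the bookkeeping of these asymptotics, especially the two boundary regimes (ii) and (iii), where the constants are sharp. I would first expand $F$ exactly as a polynomial in $m$ with coefficients in $n,k$, substitute $k=cn^\alpha+O(1)$, and extract the dominant terms at $m=(k+1)n$ and at the vertex. If the crude bound $k-\frac{1}{\theta+1}\le k$ turns out too lossy at the edge case (ii), I would fall back on Theorem~\ref{thm32}, whose extra $-\frac{4}{\sigma+1}$ and shift by $J$ give slightly more room.
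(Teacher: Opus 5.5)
Your plan is correct and, once the computation is done, gives exactly the stated thresholds. It differs from the paper only in which spectral bound you combine with Lemma~\ref{2le1}, but that choice changes the shape of the resulting quadratic. The paper's proof cites Theorem~\ref{thm31}, but the bound it actually displays is that of Theorem~\ref{thm32} with $\sigma\le n-1$, which is precisely your fallback. The paper's quadratic $f$ has constant term $a_3<0$ in regimes (i),(ii), so its roots have opposite signs and $f((k+1)n)>0$ alone finishes; only in (iii) does the paper locate the vertex. Your $F$ has $F(0)=\binom{k+1}{2}^{2}>0$, so that shortcut is unavailable and you must place the vertex. This is easy and uniform. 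Write $F=Am^2+Bm+\binom{k+1}{2}^{2}$. Then $A=(1-\frac{2k}{n})^2+\frac{2k(n-2)}{n(n-1)}>\frac34-\frac{1}{n-1}\ge\frac12$ for $n\ge5$ (using $k<n/2$), and $-B\le kn$. Hence the minimiser lies below $kn<(k+1)n$, and your discriminant branch never occurs. The endpoint value factors as
\[
F\big((k+1)n\big)=(k+1)\Big[n^2-k(k+1)n+\tfrac94 k^2(k+1)-\tfrac{2k(k+1)n}{n-1}\Big].
\]
The bracket is $(1+o(1))n^2$ in (i), $(1-c^2)n^2+O(n^{3/2})$ in (ii), and $\frac{c^2(9c-4)}{4}n^3+O(n^2)$ in (iii). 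So $F((k+1)n)$ has the same leading term as the paper's $f(kn+n)$ in (ii) and (iii), giving the thresholds $c<1$ and $c>\frac49$. Consequently the $J$-shift and the $-\frac{4}{\sigma+1}$ term make no difference for this theorem, and your coefficients are much simpler than $a_1,a_2,a_3$. You also explicitly check that the base is nonnegative before squaring, which the paper leaves implicit. What your write-up still needs is this explicit evaluation in place of the heuristic bullet points.
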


\begin{proof}
 By Theorem \ref{thm31} with Lemma \ref{2le1} and fact that $\sigma\leq
 n-1$, we have
 $$ \sum_{i=1}^{k}\mu_i(G)\leq\frac{(2m+n)(k+1)}{n}-n+\sqrt
    {\left(n^2+(n-4)m+\frac{2m^2}{n-1}-n-\frac{4m^2}{n}\right)\left(k+1-\frac
    {4}{n}\right)}.$$
 Now we will prove Brouwer's conjecture under the conditions in
 this theorem by showing that the above bound  is less than
 $m+\frac{(k+1)k}{2}$, that is, $f(m)>0$, where

\begin{multline*}
  f(m)=\left(m+n-1+\frac{k(k-1)}{2}-\frac{2m(k+1)}{n}\right)^2\\
  -\left(\frac{2m^2}{n-1}+(n-4)m+n^2-n-\frac{4m^2}{n}\right)\left(k+1-\frac{4}{n}\right).
\end{multline*}
 After simple calculation, we rewrite $f(m)=\frac{a_1m^2+a_2m+a_3}{4(n-1)n^2}$, where
\begin{align*}
a_1&=4n^3-(8k+12)n^2+(16k^2+32k-16)n-16k^2-32k+48,\\
a_2&=(-4k+4)n^4+(4k^2+4)n^3-(8k^3+4k^2-28k+56)n^2+(8k^3-24k+48)n,\\
a_3&=-4kn^5+(4k^2+4k+12)n^4+(k^4-2k^3-7k^2+4k-24)n^3\\
&~~~~~~~~~~~~~~~~~~~~~-(k^4-2k^3-3k^2+4k-12)n^2.
\end{align*}
 From this, we have
\begin{multline}
 f(kn+n)(4n-4)=(4k+8)n^3-(4k^3+20k^2+24k-4)n^2+(9k^4+ 50k^3+ \\81k^2- 24k -96)n- 9k^4 - 54k^3- 53k^2 + 84k +
 108. \label{sunm6}
\end{multline}

\noindent
${\bf Condition\,(i).}$  $0\leq\alpha<\frac{1}{2}$ and $c>0$. In this case,  $a_1=4n^3+O(n^{2+\alpha})>0$, $a_2=(4-4k)n^4+O(n^{3+2\alpha})<0$ and
$a_3=-4kn^5+O(n^{4+2\alpha})<0$. It follows that $f(m)$ has one positive root and one negative root. By (\ref{sunm6}), we obtain 
     $$f(kn+n)=\frac{4cn^{\alpha+3}+O(n^{3\alpha+2})}{4(n-1)}>0$$
as $c>0$ and $n$ is sufficiently large. This implies that the positive root of $f(m)$ is smaller than $(k+1)n$. Then $f(m)$ is
increasing on $m\geq (k+1)n$. Hence $f(m)>0$ for $m\geq (k+1)n$.

\vspace*{3mm}

\noindent
${\bf Condition\,(ii).}$  $\alpha=\frac{1}{2}$ and $0<c<1$. Then $a_1=4n^3+O(n^{2.5})>0$, $a_2=-4cn^{4.5}+O(n^{4})<0$ and
$a_3=-4cn^{5.5}+O(n^{5})<0$. By (\ref{sunm6}), we obtain
   $$f(kn+n)=\frac{4c(1-c^2)n^{3.5}+O(n^3)}{4(n-1)}>0$$ 
as $0<c<1$. Similarly, we confirm that $f(m)>0$ for $m\geq (k+1)n$.

\vspace*{3mm}

\noindent
${\bf Condition\,(iii).}$ $\alpha=1$ and $\frac{4}{9}<c$. Then $a_1=(16c^2-8c+4)n^3+O(n^2)>0$ and $a_2=-4c(2c^2-c+1)n^5+O(n^4)<0$. Then we have
\begin{align*}
    (k+1)n+\frac{a_2}{2a_1}=&cn^2+O(n)+\frac{-4c(2c^2-c+1)n^5+O(n^4)} {8(4c^2-2c+1)n^3+O(n^2)}\\[3mm]
    =&cn^2+\frac{-c(2c^2-c+1)} {2(4c^2-2c+1)}n+O(n)\\[3mm]
    =&\frac{(6c^3-3c^2+c)}{(8c^2-4c+2)}n^2+O(n)>0
\end{align*}
as $c>0$. Moreover, from (\ref{sunm6}), we get
   $$f(kn+n)=\frac{c^3(9c-4)n^5+O(n^4)}{4(n-1)}>0$$ 
as $c>\frac{4}{9}$. Hence we conclude that $f(m)>0$ for $m\geq (k+1)n$. This completes the proof.
\end{proof}

 The above theorem implies that Brouwer's conjecture is true for
 almost all graphs when $k$ is small. We now use same idea in the
 proof of the above theorem to give the results on $k=3,\,4$.
\begin{corollary}\label{coro5}
  Let $G$ be a simple graph with size $m$ and order $n$.\\
  $(i)$ If $m\geq 3n+9$ and $n\geq 4$, then Brouwer's conjecture holds for
  $k=3$;\\
  $(ii)$ if $m\geq 4n+17$ and $n\geq 6$, then Brouwer's conjecture holds for
  $k=4$.
\end{corollary}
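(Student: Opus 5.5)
We follow the same route as the proof of Theorem \ref{th6}, now with $k$ fixed. We start from the bound of Theorem \ref{thm32}, estimate $\sum d_i^2$ by Lemma \ref{2le1}, and use $\sigma\le n-1$, so that $k+1-\frac{4}{\sigma+1}\le k+1-\frac4n$. This gives
$$\sum_{i=1}^k\mu_i(G)\le \frac{(2m+n)(k+1)}{n}-n+\sqrt{\Big(\tfrac{2m^2}{n-1}+(n-4)m+n^2-n-\tfrac{4m^2}{n}\Big)\Big(k+1-\tfrac4n\Big)}.$$
Brouwer's bound $m+\binom{k+1}{2}$ then follows once $f(m)>0$, where $f$ is the same quadratic as in the proof of Theorem \ref{th6}. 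Here we also need the left-hand quantity $m+n-1+\frac{k(k-1)}{2}-\frac{2m(k+1)}{n}$ to be nonnegative, so that squaring is legitimate. For $k=3,4$ with $m\ge(k+1)n$ this holds once $n$ exceeds a small bound, because the coefficient of $m$ is $1-\frac{2(k+1)}{n}$. The cases $n\le 10$ are covered anyway by the known verification of the conjecture for small orders.

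\textbf{Specialization.} For $k=3$ and $k=4$ we write $f(m)=\frac{a_1m^2+a_2m+a_3}{4(n-1)n^2}$ with $a_1,a_2,a_3$ now explicit polynomials in $n$. For $k=3$:
$$a_1=4n^3-36n^2+224n-240,\qquad a_2=-8n^4+40n^3-\cdots .$$
We check the following facts.
\begin{enumerate}
\item $a_1>0$ for $n\ge4$ (respectively $n\ge6$ when $k=4$). This is easy from the discriminant of the cubic, or from direct checks at small $n$ together with monotonicity.
\item The vertex $-a_2/(2a_1)$ of the parabola lies to the left of the threshold $m_0=3n+9$ (respectively $4n+17$). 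Equivalently, $2a_1m_0+a_2>0$, which is a polynomial inequality in $n$.
\item $f(m_0)>0$.
\end{enumerate}
Together these give that $f$ is increasing on $[m_0,\infty)$ and positive at $m_0$, hence $f(m)>0$ for all $m\ge m_0$.

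\textbf{Main obstacle.} The key step is item 3. The shift $+9$ (respectively $+17$) is presumably chosen so that $(4n-4)f(m_0)$ becomes a polynomial in $n$ with positive leading coefficient and no positive roots beyond the stated range. Indeed, at $m=(k+1)n$ formula (\ref{sunm6}) gives leading term $(4k+8)n^3-(4k^3+20k^2+24k-4)n^2$, which is negative for moderate $n$ when $k=3$. The extra constant is needed to absorb this. We plan to expand $f(m_0)$ symbolically, collect it as a polynomial $q(n)$, and prove $q(n)>0$ for $n\ge4$ (respectively $n\ge6$). To do so we group the terms so that each group is visibly positive for $n$ beyond a modest bound, and check the finitely many remaining small $n$ directly. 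If item 2 also fails for small $n$, we handle those $n$ by noting that $m\le\binom n2$ forces $n$ to be large enough for $m_0\le\binom n2$ to hold at all, which disposes of $n$ too small to admit such $m$.
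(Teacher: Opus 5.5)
Your proposal is correct and is essentially the paper's proof. The same chain (Theorem~\ref{thm32}, Lemma~\ref{2le1}, $\sigma\le n-1$) reduces everything to $f(m)>0$, and there are only two differences: you get monotonicity on $[m_0,\infty)$ from the vertex position (true here; for $k=3$ it amounts to $2n^3-11n^2+48n+432>0$), whereas the paper uses that the constant term is $\le 0$ so the roots have opposite signs; and you explicitly check that the base of the squaring is positive, which the paper leaves implicit (it is automatic, since $m_0\le\binom{n}{2}$ forces $n\ge 9$, resp.\ $n\ge 12$).

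There are two harmless slips: for $k=3$ one has $a_1=4(n-1)(n^2-8n+48)=4n^3-36n^2+224n-192$, and your heuristic about the shift is off, since (\ref{sunm6}) at $k=3$ equals $4(n-1)(5n^2-84n+576)>0$ for all $n>1$ and $m_0=kn+9$ lies below $(k+1)n$ once $n>9$ (the direct computation is simply $f(3n+9)=9(n^2+168n+432)/n^2$).
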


\begin{proof}
  The proof is similar to the proof of Theorem \ref{th6}. We consider the function $f(m)$ defined in the proof of Theorem \ref{th6}.
   For $k=3$, we have
  \begin{equation*}
   f(m)=\frac{(n^2-8n+48)m^2-(2n^3-8n^2+48n)m-3n^4+12n^3}{n^2}.
   \end{equation*}
 As $n\geq 4$, we have $n^2-8n+48>0$, $-3n^4+12n^3\leq 0$, $f(3n+9)=\frac{9(n^2+168n+432)}{n^2}>0$.
 Thus $f(m)>0$ for $m\geq 3n+9$ with $n\geq 4$, that is, Brouwer's conjecture holds in this
 case. This completes the proof of (i). For $k=4$, similarly we get
 $f(m)$ has one positive root and one negative root, and
 $f(4n+17)>0$ for $n\geq 6$. Hence the result of (ii) follows.
\end{proof}

\begin{remark}{\rm
  Theorem 3.1 in \cite{D} has proved that if  Brouwer's conjecture holds for all graphs with $k=p$,
  then it also holds for all graphs with $k=n-p-1$. By a similar proof of this theorem, we easily obtain that if
  Brouwer's conjecture holds for all graphs with $m\geq s$ and $k=p$, then it also holds for all graphs with $m\leq\frac{n(n-1)}{2}-s$ and $k=n-p-1$.
  Then the results in Theorem \ref{th6} and Corollary \ref{coro5}
  can be rewritten in another way. For example, by the (i) of Corollary
  \ref{coro5}, we get that Brouwer's conjecture holds for $k=n-4$ if
  $m\leq \frac{n^2-7n-18}{2}$ and $n\geq 4$.}
\end{remark}

\section{Concluding remark}
In this paper, we have investigated the upper bound for the sum of
the $k$ largest eigenvalues of symmetric matrices, as presented in
Theorems \ref{th3} and \ref{th4}. Applying these results to the
adjacency and Laplacian matrices of a graph, we obtained several
interesting estimates. In particular, we established new upper
bounds on the sum of the $k$ largest Laplacian eigenvalues. These
bounds imply that Brouwer's conjecture holds for small $k$ in the
case of almost all graphs, thereby contributing a meaningful step
toward its complete resolution. It is important to emphasize that the results derived in Section 2 possess broad applicability beyond the matrices discussed here; they can be directly applied to other graph matrices to yield novel spectral bounds by combining their properties.

 \end{document}